\newtheorem{theorem}{Theorem}[section]
\newtheorem{lemma}[theorem]{Lemma}
\newtheorem{proposition}[theorem]{Proposition}
\newtheorem{corollary}[theorem]{Corollary}
\theoremstyle{remark}
\newtheorem{remark}[theorem]{Remark}
\newtheorem{definition}[theorem]{Definition}
\newtheorem{notations}[theorem]{Notations}
\newtheorem*{conventions}{Conventions}
\newcommand{\T}{\mathscr{F}}
\newcommand{\Mbar}{\overline{M}}
\newcommand{\n}{\pi}
\newcommand{\s}{\varepsilon}
\newcommand{\p}{\partial}
\newcommand{\f}{{\bf f}}
\newcommand{\e}{\mathbf{e}}
\newcommand{\m}{\nu}
\newcommand{\tb}{\bar{T}}
\newcommand{\bart}{\bar{t}}
\newcommand{\hatt}{\check}
\begin{document}

\title[Invariance of Quantum Rings I]
{Invariance of Quantum Rings under Ordinary Flops I:\\ Quantum corrections and reduction to local models}

\author[Y.-P.~Lee]{Yuan-Pin~Lee}
\email{yplee@math.utah.edu}
\address{Y.-P.~Lee: Department of Mathematics, University of Utah,
Salt Lake City, Utah 84112-0090, U.S.A.}

\author[H.-W.~Lin]{Hui-Wen~Lin}
\email{linhw@math.ntu.edu.tw}
\address{H.-W.~Lin: Department of Mathematics and Taida
Institute of Mathematical Sciences (TIMS),
National Taiwan University, Taipei 10617, Taiwan}

\author[C.-L.~Wang]{Chin-Lung~Wang}
\email{dragon@math.ntu.edu.tw}
\address{C.-L. Wang: Department of Mathematics,
Center for Advanced Studies in Theoretical Sciences (CASTS), and Taida
Institute of Mathematical Sciences (TIMS),
National Taiwan University, Taipei 10617, Taiwan}

\subjclass{14N35, 14E30}
\keywords{Quantum cohomology, ordinary flops, analytic continuations, degeneration formula, reconstructions}

\begin{abstract}
This is the first of a sequence of papers proving the quantum invariance
under \emph{ordinary flops} over an \emph{arbitrary smooth base}. 

In this first part, we determine the defect of the cup product 
under the canonical correspondence and show that it is corrected 
by the small quantum product attached to the extremal ray. 
We then perform various reductions to reduce the problem to the local models.

In Part II \cite{LLWp2}, we develop a \emph{quantum Leray--Hirsch theorem} and use it to show that the big quantum cohomology ring is
invariant under analytic continuations in the K\"ahler moduli space 
for \emph{ordinary flops of splitting type}.
In Part III \cite{LLQW}, we remove the splitting condition by developing a \emph{quantum splitting principle}, 
and hence solve the problem completely.
\end{abstract}

\maketitle

\tableofcontents


\numberwithin{equation}{section}
\setcounter{section}{-1}

\section{Introduction}

\subsection{Background review}

Two complex manifolds $X$ and $X'$ are \emph{$K$-equivalent},
denoted by $X =_K X'$, if there are proper birational morphisms
$(\phi, \phi'): Y \to X \times X'$ such that $\phi^* K_X = \phi'^*
K_{X'}$. Major examples come from \emph{birational minimal models}
in Mori theory and especially from \emph{birational Calabi--Yau
manifolds} in the mathematical study of string theory.
$K$-equivalent projective manifolds share the same Betti and Hodge
numbers. It has been conjectured that a \emph{canonical
correspondence} $T \in A(X \times X')$ exists which induces
isomorphisms of cohomology groups and preserves the
\emph{Poincar\'e pairing}. For a survey, see \cite{Wang2}.

However, simple examples show that the classical cup product is
generally not preserved under $\T$, and this leads to new
directions of study in higher dimensional birational geometry.
On the other hand, according to the philosophy of
\emph{crepant transformation conjecture} and string theory,
the \emph{quantum product} should be more natural and display
certain functoriality not available to the cup product
among $K$-equivalent manifolds.

\emph{Flops} are typical examples of $K$-equivalent birational
maps:
\begin{equation*}
\xymatrix{X\ar[rd]^\psi \ar@{-->}[rr]^f & &X'\ar[ld]_{\psi'} \\
&\bar X}
\end{equation*}
In fact they form the building blocks to connect birational minimal
models \cite{Ka}. The simplest flop is the simple $P^1$ flop
(Atiyah flop) in dimension 3. It is known that up to deformations
it generates, \emph{locally} or \emph{symplectically}, all
$K$-equivalent maps for threefolds. The quantum corrections by
extremal ray invariants to the cup product in the local 3 dimensional case was
first observed by Aspinwall--Morrison and Witten \cite{Witten} and
later globalized by Li--Ruan through the degeneration formula
\cite{LiRu}.

The higher dimensional generalizations are known as \emph{ordinary
$P^r$ flops} (also abbreviated as ``ordinary flops'' or ``$P^r$
flops''). The local geometry is encoded in a triple $(S, F, F')$ where $S$
is a smooth variety and $F$, $F'$ are two rank $r + 1$ vector
bundles over $S$. If $Z \subset X$ is the $f$-exceptional loci,
then $\bar\psi: Z \cong P(F) \to S \subset \bar X$ with fibers
spanned by the flopped curves $C \cong P^1$ and $N_{Z/X} =
\bar\psi^*F' \otimes \mathscr{O}_Z(-1)$. Similar structure holds
for $Z' \subset X'$, with $F$ and $F'$ exchanged.
See Section~\ref{s:1.1} for details.
(We note that the Atiyah flop corresponds to $S = \mbox{pt}$ and $r = 1$.)
Thus it is reasonable to expect that ordinary flops
play a vital role in the study of $K$-equivalent maps.
For example, up to complex cobordism, any $K$-equivalent map can be
decomposed into $P^1$ flops \cite{Wang3}.

The study of invariance of quantum product under ordinary flops in
higher dimensions was started in \cite{LLW}. The canonical
correspondence is given by the graph closure $[\bar \Gamma_f]$
and the quantum invariance under 
$$\T = [\bar \Gamma_f]_*: QH(X) \to QH(X')$$
is proved for all \emph{simple $P^r$ flops}, i.e.~with $S = \mbox{pt}$.
The crucial idea is to interpret \emph{$\T$-invariance} in terms of
\emph{analytic continuations in Gromov--Witten theory}.

Let us explain this point in a little more details.
We use \cite{CoKa} as our general reference for early developments
in Gromov--Witten invariants. Let $\overline{M}_{g, n}(X, \beta)$ be
the moduli space of stable maps from genus $g$ nodal curves with
$n$ marked points to $X$, and let $e_i: \overline{M}_{g, n}(X,
\beta) \to X$ be the evaluation maps. The Gromov--Witten potential
\begin{equation*}
\begin{split}
F^X_g(t) = \sum_{n, \beta} \frac{q^\beta}{n!} \langle
t^n \rangle^X_{g, n, \beta} = \sum_{n \ge 0,\, \beta \in NE(X)}
\frac{q^\beta}{n!} \int_{[\overline{M}_{g, n}(X, \beta)]^{vir}}
\prod_{i = 1}^n e_i^* t
\end{split}
\end{equation*}
is a formal function in $t \in H(X)$ and Novikov variables
$q^\beta$, with $\beta \in NE(X)$, the Mori cone of effective
classes of one cycles. Modulo convergence issues, it is a
function on the \emph{complexified K\"ahler cone} $\omega \in
\mathcal{K}^\mathbb{C}_X := H^{1, 1}_\mathbb{R} + i\mathcal{K}_X$
via
\begin{equation*}
q^\beta = e^{2\pi i (\beta.\omega)}.
\end{equation*}
Under the canonical correspondence $\T$,
$F_g^X$ and $F_g^{X'}$ share the same variable $t \in H \cong H(X,
\mathbb{C}) \cong H(X', \mathbb{C})$.
However, $\T$ does not identify $NE(X)$ with $NE(X')$.
Indeed, for the flopped curve classes $\ell = [C]$ (resp.~$\ell' = [C']$),
we have
\begin{equation*}
\T \ell = -\ell' \notin NE(X').
\end{equation*}
By duality this implies that
$\mathcal{K}^\mathbb{C}_X \cap \mathcal{K}^\mathbb{C}_{X'} =
\emptyset$ in $H^2_\mathbb{C}$.
Hence $F_g^X$ and $F_g^{X'}$ have different \emph{domains} and
comparison can only make sense after analytic continuations over
a certain compactification of ${\mathcal{K}^\mathbb{C}_X \cup
\mathcal{K}^\mathbb{C}_{X'}} \subset H^2_\mathbb{C}$.
(Thus the \emph{naive K\"ahler moduli} $\mathcal{K}$ is usually regarded
as the closure of the union of all $\mathcal{K}^\mathbb{C}_{X'}$'s
with $X' =_K X$.)
In other words, we set $\T q^\beta = q^{\T \beta}$.
Then $\T F_g^X$ can not be a formal GW potential of $X'$.

In this paper, we will focus on genus zero theory, which
carries a quantum product structure, or equivalently a Frobenius structure
\cite{yM}.
Let $\{T_{\mu}\}$ be a basis of $H$ and $\{T^{\mu} := \sum g^{\mu \nu} T_{\nu}\}$
the dual basis with respect to the Poincar\'e pairing,
where $g_{\mu \nu} = (T_{\mu}.T_{\nu})$ and $(g^{\mu \nu}) = (g_{\mu \nu})^{-1}$ is
the inverse matrix.
Denote $t = \sum t^{\mu} T_{\mu}$ a general element in $H$.
The \emph{big quantum ring} $(QH(X), *)$ uses only the genus
zero potential with 3 or more marked points:
\begin{equation*}
\begin{split}
T_{\mu} *_t T_{\nu} = \sum_{\kappa} \frac{\p^3 F^X_0}{\p t^{\mu} \p t^{\nu} \p t^{\kappa}}(t)
T^{\kappa} = \sum_{{\kappa},\,n \ge 0,\, \beta \in NE(X)} \frac{q^\beta}{n!} \langle T_{\mu},
T_{\nu}, T_{\kappa}, t^n \rangle^X_{0, n + 3, \beta} T^{\kappa}.
\end{split}
\end{equation*}
The \emph{Witten--Dijkgraff--Verlinde--Verlinde equation} (WDVV)
guarantees that $*_t$ is a family of associative products on $H$
parameterized by $t \in H$.
Equivalently, it equips $H$ a structure of
\emph{formal Frobenius manifold} $H_X$ with a family
(in $z  \in \mathbb{C}^\times$) of
integrable ($=$ flat) \emph{Dubrovin connections}
$$
\nabla^z = d -z^{-1} \sum_\mu dt^\mu \otimes T_\mu *_t
$$
on the tangent bundle $TH = H \times H$.

There is a natural embedding of $\mathcal{K}^\mathbb{C}_X$ in $H$.
With suitable choice of coordinates we have $q^\ell = e^{2\pi i
t_\ell}$ with the K\"ahler constraint ${\rm Im}\, t_\ell > 0$.
Since now $\T q^\ell = q^{-\ell'}$, $\{q^\ell, q^{\ell'}\}$ serve
as an atlas for $P^1$, the compactification of
$\mathbb{C}/\mathbb{Z} \cong \mathbb{C}^\times$. This gives the
formal $H$ an analytic $P^1$ direction. In \cite{LLW}, for simple
flops the structural constants $\p^3_{\mu\nu\kappa}F^X_0(t)$ for big quantum
product are shown to be analytic (in fact algebraic) in $q^\ell$.
Moreover, $\T$ identifies $H_X$ and $H_{X'}$ through analytic
continuations over this $P^1$. Based on this, in \cite{ILLW} the
Frobenius structure is further exploited to conclude analytic
continuations from $F^X_g$ to $F^{X'}_g$ for all simple flops and
for all $g \ge 0$.

\subsection{Outline of the contents}
This is the first of a sequence of papers proving the quantum invariance
under ordinary flops over a smooth base. 
In this first part, we determine the defect of the cup product 
under the canonical correspondence and show that it is corrected 
by the small quantum product attached to the extremal ray. 
We then perform various reductions to the local models.

In Part II \cite{LLWp2}, we show that the big quantum cohomology ring is 
invariant under analytic continuations in the K\"ahler moduli space 
for flops of splitting type.
In Part III \cite{LLQW}, the final part of this series, we remove the splitting condition by developing a quantum splitting principle, hence solve the problem completely.

In particular, this is the first result on the $K$-equivalence 
(crepant transformation) conjecture where the local structure of the 
exceptional loci can not be deformed to any explicit (e.g.~toric) geometry
and the analytic continuation is nontrivial.
As far as we know, this is also the first result for which the analytic
continuation is established with nontrivial Birkhoff factorizations.


We give an outline of the contents of this paper below.

\begin{conventions}
Throughout this paper, we work on the even cohomology
$H = H^{even}$ to avoid the complications on signs.
In particular, the degree always means the Chow degree.
Nevertheless all our discussions and results work for the full cohomology
spaces.
\end{conventions}

\subsubsection 
{Defect of cup product under the canonical correspondence}

Let $\{\tb_i\}$ be a basis of $H(S)$ with dual basis $\{\check \tb_i\}$.
Let $h = c_1(\mathscr{O}_Z(1))$ and $H_k = c_k(Q_{F})$ where $Q_F
\to Z = P(F)$ is the universal quotient bundle. Similarly we define
$h'$ and $H_k'$ on the $X'$ side. The $H_k$'s are of fundamental
importance since
\begin{equation*}
\T H_k = (-1)^{r-k} H'_k
\end{equation*}
and the dual basis of $\{\tb_i h^j\}$ in $H(Z)$ is given by $\{\check \tb_i H_{r-j}\}$.

\begin{theorem}[(Topological defect)] \label{top-defect}
Let $a_1, a_2, a_3 \in H(X)$ with $\sum \deg a_i = \dim X$. Then
\begin{align*}
&(\T a_1.\T a_2.\T a_3)^{X'} - (a_1.a_2.a_3)^X \\
&\qquad = (-1)^r \times \sum\nolimits_{i_*, j_*} (a_1.\check\tb_{i_1} H_{r-j_1})^X
(a_2.\check\tb_{i_2} H_{r-j_2})^X (a_3.\check\tb_{i_3} H_{r-j_3})^X \\
&\qquad\qquad\qquad\qquad\qquad \times (s_{j_1 + j_2 + j_3 - (2r + 1)}(F + F'^*)
\tb_{i_1} \tb_{i_2} \tb_{i_3})^S,
\end{align*}
where $s_i$ is the $i$-th Segre class.
\end{theorem}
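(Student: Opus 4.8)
The plan is to compute the difference of triple products by resolving the flop. Let $Y$ be the common blow-up of $X$ and $X'$ along $Z$ and $Z'$ respectively, with morphisms $\phi: Y \to X$ and $\phi': Y \to X'$; the exceptional divisor $E \subset Y$ is a $P^r \times_S P^r$-bundle over $S$ (more precisely $E = P(F) \times_S P(F')$), with projections to $Z = P(F)$ and $Z' = P(F')$. The canonical correspondence $\T$ is induced by the graph closure $[\bar\Gamma_f] = [Y]$ viewed inside $X \times X'$, so $\T a = \phi'_* \phi^* a$ for $a \in H(X)$. Therefore
\begin{equation*}
(\T a_1 . \T a_2 . \T a_3)^{X'} = \int_{X'} \phi'_*\phi^* a_1 \cdot \phi'_*\phi^* a_2 \cdot \phi'_*\phi^* a_3.
\end{equation*}
The first step is to rewrite everything upstairs on $Y$. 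Using the projection formula and the fact that $\phi'^*\phi'_* \alpha = \alpha + (\text{correction supported on } E)$, I would express $\phi'^*(\T a_i)$ in terms of $\phi^* a_i$ plus a class pushed forward from $E$. The key input is the excess intersection / key formula for a blow-up: $\phi'^* \phi'_* \beta = \beta - \iota_{E*}(\xi' \cdot \iota_E^*\beta / (\text{something}))$ where $\iota_E: E \hookrightarrow Y$ and $\xi'$ is the relevant tautological class; more usefully, $\phi^* a - \phi'^*\T a$ is a class of the form $\iota_{E*}(\gamma_a)$ for an explicit $\gamma_a \in H(E)$ depending linearly on $a$, computed from $(a.\check\tb_i H_{r-j})^X$ — these are exactly the "coordinates" of the restriction $a|_Z$ in the basis dual to $\{\tb_i h^j\}$.

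The second step is the bookkeeping. Writing $\phi^* a_i = \phi'^* \T a_i + \iota_{E*}(\gamma_i)$ and expanding the triple product $\int_Y \phi^* a_1 \cdot \phi^* a_2 \cdot \phi^* a_3$, the leading term gives $\int_{X'}(\T a_1.\T a_2.\T a_3)$ (since $\phi'$ has degree one and $\phi'^*$ is a ring map into $H(Y)$ modulo kernel of $\phi'_*$), while $\int_Y \phi^* a_1\cdot \phi^* a_2\cdot\phi^* a_3 = \int_X (a_1.a_2.a_3)$ by the projection formula for $\phi$. The cross terms with exactly one factor $\iota_{E*}(\gamma_i)$ vanish because $\phi'_* \iota_{E*} = \bar\psi'_*$ composed with pushforward and the relevant class restricts trivially; terms with two factors $\iota_{E*}$ involve $\iota_E^* \phi'^* \T a_k$, which again must be handled. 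The surviving contribution is the triple-$E$ term: $\int_Y \iota_{E*}(\gamma_1)\cdot\iota_{E*}(\gamma_2)\cdot\iota_{E*}(\gamma_3)$, which by the self-intersection formula equals $\int_E \gamma_1 \gamma_2 \gamma_3 \cdot c_{top}(N_{E/Y})^{2}$... more precisely $\int_E \gamma_1\gamma_2\gamma_3 \cdot (\text{two copies of } c_1(N_{E/Y}))$. I then push this down along $E \to S$; the fiber is $P^r \times P^r$, and integration over the fiber together with the normal bundle class $N_{E/Y} = \mathscr{O}(-1,-1)\otimes(\cdots)$ produces the Segre class $s_{j_1+j_2+j_3-(2r+1)}(F + F'^*)$ and the sign $(-1)^r$. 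This is where the triple $(S,F,F')$ enters: $c(N)$ along the fiber involves $h + h'$ twisted by $\bar\psi^*F'$ and $\bar\psi'^*F$, and the fiber integral $\int_{P^r\times P^r}$ of monomials in $h, h'$ against $c_{top}$ gives Segre classes of $F \oplus F'^*$ by the standard projective-bundle formula.

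The main obstacle will be controlling the mixed cross terms carefully — in particular showing that the one-$E$ and two-$E$ contributions either vanish or reassemble correctly, which requires knowing $\iota_E^* \phi'^*(\T a)$ and $\iota_E^*\phi^* a$ explicitly in terms of $a|_Z$, $a|_{Z'}$ and the classes $h, h', H_k, H'_k$; this uses the identities $\T H_k = (-1)^{r-k}H'_k$ and the dual-basis statement quoted just before the theorem. Once the restriction of $\phi^* a$ to $E$ is understood as a polynomial in $h$ with coefficients pulled back from $S$ (degrees $\le r$ being "visible" from $X$, the rest from $X'$), the cancellation of lower-order terms is forced by the structure of $H(P^r\times P^r)$-valued intersection numbers. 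The sign $(-1)^r$ is the only delicate numerical point and I expect it to come from $c_1(\mathscr{O}_E(-1,-1))^{\text{fiber dim}}$ paired against the top class. Everything else is a (lengthy but routine) Segre-class computation on the projective bundle $P(F)\to S$.
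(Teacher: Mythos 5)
Your overall strategy---resolve on $Y$, write $\phi'^*\T a_i = \phi^* a_i + j_*\lambda_i$ with $\lambda_i$ determined by the dual-basis coordinates $(a_i.\check\tb H_{r-j})$, and reduce the defect to fiber integrals over $E = P(F)\times_S P(F')$ evaluated by Segre classes---is exactly the paper's. The genuine gap is in the bookkeeping of which cross terms survive. You substitute in all three factors symmetrically and assert that the one-$E$ terms vanish and that ``the surviving contribution is the triple-$E$ term'' $\int_E \lambda_1\lambda_2\lambda_3\, c_1(N_{E/Y})^2$. That assertion is false: the terms with exactly two $E$-supported factors do not vanish and contribute on the same footing. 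Already for the Atiyah flop ($S=\mathrm{pt}$, $r=1$, $d_i=(a_i.\ell)$) one has $\lambda_i = d_i\cdot 1_E$ and $\T a_m|_{Z'} = -d_m h'$, so the three two-$E$ terms contribute $3d_1d_2d_3$ while the triple-$E$ term contributes $-2d_1d_2d_3$; only their sum yields the correct defect $-d_1d_2d_3$. As written, your plan computes the wrong answer unless the two-$E$ terms are carried along, and carrying them along symmetrically triples the Segre-class computation.

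The paper sidesteps this with an asymmetric substitution: first write $(\T a_1.\T a_2.\T a_3) = (\phi'^*\T a_1.\phi'^*\T a_2.\phi'^*\T a_3)$ and replace only the third factor by $\phi^* a_3 + j_*\lambda_3$; the $j_*\lambda_3$ term dies against the two remaining $\phi'^*$ factors because those restrict to $E$ as polynomials in $y=\bar\phi'^*h'$ alone, while $\lambda_3$ has $x$-degree at most $j_3-1\le r-1<r$, so the fiber integral over $P^r\times P^r$ cannot reach $x^r$. Then the first two factors are converted, the one-$E$ terms die by the mirror degree count ($y$-degree of $\lambda_i$ is at most $r-1$ against pure $x$-polynomials), and the entire defect is the single two-$E$ term $\phi^* a_3.j_*\lambda_1.j_*\lambda_2 = -\phi^* a_3.j_*\bigl(\lambda_1\lambda_2(x+y)\bigr)$, pushed down using $\bar\phi_* y^q = s'_{q-r}(F')$ and the identity $c(Q_F)s(F+F'^*) = s(F'^*)/(1-h)$. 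The uniform degree bound on $\lambda_i$ (both its $x$- and $y$-powers are at most $r-1$) is the engine of all the vanishing, and your proposal never isolates it---you gesture at needing $\iota_E^*\phi^*a$ and $\iota_E^*\phi'^*\T a$ explicitly but do not record that these are polynomials in $x$ alone, respectively $y$ alone, of degree at most $r$. If you insist on the symmetric expansion you must compute and add the two-$E$ and three-$E$ contributions; the asymmetric route is both shorter and the one the paper takes.
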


\subsubsection 
{Quantum corrections attached to the flopping extremal rays}
We then proceed to calculate the \emph{quantum corrections} attached to the
flopping extremal ray $\mathbb{N}\ell$.
Using the calculation, we demonstrate that
the ``quantum corrected product'', combining the classical product
and the quantum deformation attached to the extremal ray,
is $\T$-invariant after the analytic continuation.

The stable map moduli for the extremal ray has a bundle
structure over $S$:
\begin{equation*}
\xymatrix{
\Mbar_{0, n}(P^r, d\ell)\ar[r] & \Mbar_{0, n}(Z,
d\ell) \ar[r]^>>>>>>{e_i} \ar[d]_{\Psi_n} & Z \ar[ld]^{\bar\psi}\\
& S &}
\end{equation*}
In this case, the GW invariants on $X$ are reduced to
\emph{twisted invariants} on $Z$ by certain obstruction bundles. We define
the fiber integral (see \S\ref{set-up} for the details of the notations)
\begin{equation*}
\Big\langle \prod\nolimits_{i = 1}^n h^{j_i} \Big\rangle_d^{/S} :=
\Psi_{n*} \Big(\prod\nolimits_{i = 1}^n e_i^* h^{j_i} . e(R^1 ft_*
e_{n + 1}^* N_{Z/X}) \Big) \in A^\m(S)
\end{equation*}
as a \emph{$\bar\psi$-relative invariant over $S$}, a cycle of
codimension $\m := \sum j_i - (2r + 1 + n - 3)$. The absolute
invariant is obtained by the pairing on $S$: For $\bart_i \in H(S)$,
\begin{equation*}
\langle\bart_1 h^{j_1}, \cdots, \bart_n h^{j_n} \rangle_d^X = \Big(\langle h^{j_1},
\cdots, h^{j_n} \rangle_d^{/S}.\prod\nolimits_{i = 1}^n \bart_i \Big)^S.
\end{equation*}

If $\m = 0$ then the invariant reduces to the simple case. This
happens for $n = 2$ since then $j_1 = j_2 = r$. Thus we may
calculate \emph{extremal functions} based on the 2-point case by (divisorial)
reconstruction. To state the result, let
\begin{equation*}
\f(q) := \frac{q}{1 - (-1)^{r + 1}q}
\end{equation*}
which satisfies the functional equation $\f(q) + \f(q^{-1}) =
(-1)^r$.

For 3-point functions, we show that $W_\m := \sum_{d \in
\mathbb{N}} \langle h^{j_1}, h^{j_2}, h^{j_3} \rangle_d^{/S} q^d$
with $1 \le j_i \le r$ lies in $A^\m(S)[\f]$ and is
independent of the choices of $j_i$'s.

\begin{theorem}[(Quantum corrections)] \label{q-correct}
The function $W_\m$ is the action on $\f$ by a Chern classes
valued polynomial in the operator $\delta = qd/dq$. 
(See Proposition~\ref{recursive}.)
It satisfies
\begin{equation*}
W_\m - (-1)^{\m + 1}W_\m' = (-1)^r s_\m(F + F'^*).
\end{equation*}
\end{theorem}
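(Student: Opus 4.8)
The plan is to reduce the three-point functions $W_\m$ to the two-point functions and to read the differential-operator form directly off the reduction, following, over the base $S$, the strategy used for simple flops in \cite{LLW}. First, since $h$ is the relative hyperplane class of $\bar\psi\colon Z=P(F)\to S$ with $h.\ell=1$, the divisor equation gives $\langle h^{j_1},h^{j_2},h\rangle_d^{/S}=d\,\langle h^{j_1},h^{j_2}\rangle_d^{/S}$, so a $W_\m$ with one insertion equal to $h$ is $\delta=q\,d/dq$ applied to the corresponding two-point series; moreover a two-point $\langle h^{j_1},h^{j_2}\rangle_d^{/S}$ is forced to vanish unless $j_1=j_2=r$, i.e.\ unless $\m=0$. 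For a general triple $(j_1,j_2,j_3)$ with $1\le j_i\le r$ I would combine the divisor and splitting (WDVV) axioms with the projective bundle relation $\sum_{k=0}^{r+1}c_k(F)\,h^{r+1-k}=0$ in $H(Z)$ to express $W_\m$ through the two-point data: each use of the relation trades a factor $h$ for Chern classes of $F$ and lowers $\m$, and the scheme makes manifest that the result is independent of the chosen triple. This is the recursion of Proposition~\ref{recursive}.

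The base case is the two-point series $\sum_d\langle h^r,h^r\rangle_d^{/S}q^d$. Here I would use that the extremal invariants of $X$ along $Z$ equal the invariants of $Z$ twisted by the Euler class of the obstruction bundle $\mathrm{Obs}_d$ --- the first derived pushforward of $N_{Z/X}=\bar\psi^*F'\otimes\mathscr{O}_Z(-1)$ along the universal curve over $\Mbar_{0,n}(Z,d\ell)$ --- where $\Mbar_{0,n}(Z,d\ell)\to S$ is the fibre bundle associated with $\Mbar_{0,n}(P^r,d)$. Because $\langle h^r,h^r\rangle_d^{/S}$ lands in $A^0(S)$, only the part of $e(\mathrm{Obs}_d)$ of degree zero in the Chern classes of $F'$ survives the fibre integral $\Psi_{2*}$, and that part is exactly the obstruction class of the simple $P^r$ flop; hence $\langle h^r,h^r\rangle_d^{/S}$ reduces to the value already computed in \cite{LLW}, which together with the divisor equation yields $W_0=\f(q)$.

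The hard part is Proposition~\ref{recursive} when $\m>0$: one must control $e(\mathrm{Obs}_d)$ and its interaction with the insertions $h^{j_i}$ inside the fibre pushforward $\Psi_{n*}$, where genuine Chern classes of both $F$ and $F'$ now enter. I would expand $e(\mathrm{Obs}_d)$ through the $P^r$-bundle, convert the curve degree $d$ into the operator $\delta$ via $\delta q^d=d\,q^d$, and arrange the bookkeeping so that the coefficient of $\f$ itself comes out to the degree-$\m$ part of $s(F)\,s(F'^*)=1/\big(c(F)c(F'^*)\big)$, i.e.\ $s_\m(F+F'^*)$ --- this is precisely the total class governing the inversion of the obstruction contribution. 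This computation is of the same nature as the one later underlying the quantum Leray--Hirsch theorem, and I expect essentially all the real work of the theorem to be concentrated here; the $\m$-independence and the ``$\delta$-polynomial on $\f$'' assertion fall out of the same recursion.

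Finally, the identity $W_\m-(-1)^{\m+1}W_\m'=(-1)^r s_\m(F+F'^*)$ follows by comparing the recursion on $X$ with the one on $X'$. Under $\T$ the bundles $F,F'$ are interchanged and the Novikov variable is inverted, $q\mapsto q^{-1}$; hence $\delta\mapsto-\delta$, and by the scalar functional equation $\f(q)+\f(q^{-1})=(-1)^r$ the substitution sends $\f$ to $(-1)^r-\f$. Writing $W_\m=\sum_k a_k\,\delta^k\f$ with $a_k\in A^\m(S)$ built from $F,F'$, the analytic continuation turns $W_\m'$ into $\sum_k(a_k|_{F\leftrightarrow F'})(-\delta)^k\big((-1)^r-\f\big)$. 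In the combination $W_\m-(-1)^{\m+1}W_\m'$ the terms $\delta^k\f$ with $k\ge1$ cancel by parity (equivalently, the difference has no pole at the conifold point $q=(-1)^{r+1}$), the $\f$-term cancels by the Segre duality identity $s_\m(E^*)=(-1)^\m s_\m(E)$ applied to $E=F+F'^*$, and what is left is the constant $(-1)^r s_\m(F+F'^*)$. Given the $\delta$-polynomial form of $W_\m$ with the correct leading coefficient, this last step is essentially formal.
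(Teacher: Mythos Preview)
Your last paragraph contains a genuine gap. You claim that in $W_\m-(-1)^{\m+1}W_\m'$ the terms $\delta^k\f$ with $k\ge1$ ``cancel by parity''. Unpacking this, the cancellation requires $a_k(F',F)=(-1)^{\m+k}a_k(F,F')$ for the higher coefficients $a_k$. But this alternating symmetry is precisely what the paper singles out (just before Proposition~\ref{recursive}) as the nontrivial property to be \emph{established}; it is essentially equivalent to the functional equation itself once the $k=0$ piece is handled. Your parenthetical ``equivalently, the difference has no pole at the conifold point'' is the conclusion, not an independent input, so the argument is circular. The paper does \emph{not} attempt to verify the parity coefficient by coefficient; instead it proves the functional equation by induction on $\m$, plugging the inductive hypothesis $W'_{\m-j}=(-1)^{\m-j+1}W_{\m-j}+(-1)^{r+\m-j}\tilde s_{\m-j}$ into the recursion for $W'_\m$ and reducing via the Whitney identities $s(F)=s(F+F'^*)c(F'^*)$ and $s(F'^*)=s(F+F'^*)c(F)$.

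Your route to the recursion is also different from the paper's, and likely harder. You propose to compute $\Psi_{n*}(\prod e_i^*h^{j_i}\cdot e(\mathrm{Obs}_d))$ directly by expanding the obstruction Euler class over the $P^r$-bundle. The paper avoids this entirely: Proposition~\ref{recursive} is obtained by divisorial reconstruction on the projective local model $X_{loc}=P(N\oplus\mathscr{O})$, where the boundary splitting terms are evaluated using the explicit dual basis $\{\hatt\bart_j H_{j-1}\Theta_{r+1}\}$ of Lemma~\ref{defect-1}. The key computation is extracting the $h^r$-coefficient of $H_{j-1}\Theta_{r+1}$ as $(-1)^{r+1}((-1)^jc'_j-c_j)$, which yields the closed recursion
\[
W_\m=s_\m\f+\sum_{j=1}^\m W_{\m-j}\bigl((-1)^rc_j\f-(-1)^{r+j}c'_j\f-c_j\bigr).
\]
It is this explicit form, with both $c_j$ and $c'_j$ visible, that makes the inductive proof of the functional equation go through. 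Your obstruction-bundle expansion would have to reproduce exactly these coefficients, and you have not indicated how.
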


This implies that the topological defect is corrected by the
3-point extremal functions. The analytic continuation for $n \ge 4$
points follows by reconstruction.

\subsubsection{Degeneration analysis} 
The next step is to prove that the big quantum ring, involving all curve
classes, are $\T$-invariant.
As a first step, this statement is reduced to a corresponding one
on $f$-special descendent invariants on the \emph{projective local models}
\[
 X_{loc} :=  \tilde E = P(N_{Z/X} \oplus \mathscr{O}) \overset{p}{\to} Z
\]
{and}
\[
 X'_{loc} := \tilde E' = P(N_{Z'/X'} \oplus \mathscr{O}) \overset{p'}{\to} Z'
\]
by a \emph{degeneration analysis}.

To compare GW invariants of non-extremal classes, the
application of \emph{degeneration formula} and \emph{deformation to
the normal cone} are well suited for ordinary flops with base $S$.
It reduces the problem to local models with induced flop $f: \tilde E \dasharrow \tilde E'$. The
reduction has two steps. The first reduces the problem to
\emph{relative local} invariants $\langle A \mid \s, \mu
\rangle^{(\tilde{E}, E)}$ where $E \subset \tilde E$ is the
infinity divisor. The second is a further reduction back to
\emph{absolute} local invariants, with possibly descendent
insertions coupled to $E$, called \emph{$f$-special type}.

The local model $\bar p:= \bar\psi \circ p: \tilde E \to S$ and the
flop $f$ are all over $S$, with simple case as fibers. In
particular, the kernel of $\bar p_*: N_1(\tilde E) \to N_1(S)$ is
spanned by the $p$-fiber line class $\gamma$ and $\bar \psi$-fiber
line class $\ell$. $\T$ is compatible with $\bar p$. Namely
\begin{equation*}
\xymatrix{ N_1(\tilde E) \ar[rr]^\T \ar[rd]_{\bar p_* \oplus d_2}
& &
N_1(\tilde E') \ar[ld]^{\bar p'_* \oplus d_2'} \\
& N_1(S) \oplus \mathbb{Z} &}
\end{equation*}
is commutative. Here we write a class $\beta$ in $N_1(\tilde E)$ as $\beta_S+d\ell+d_2\gamma$ with some $\beta_S$ in $N_1(S)$ and $d,d_2\in\mathbb{Z}$. Thus the functional equation of a generating series
$\langle A \rangle$ is equivalent to those of its various subseries
(fiber series) $\langle A \rangle_{\beta_S, d_2}$ labeled by
$NE(S) \oplus \mathbb{Z}$.

\begin{theorem}[(Degeneration reduction)] \label{deg-red}
To prove $\T \langle \alpha \rangle^X_g \cong \langle \T\alpha
\rangle^{X'}_g$ for all $\alpha \in H(X)^{\oplus n}$, $g \le g_0$, it is enough
to prove the local case $f: \tilde E \to \tilde E'$ for descendent
invariants of $f$-special type:
\begin{equation*}
\T\langle A, \tau_{k_1} \s_1, \ldots, \tau_{k_\rho} \s_\rho
\rangle^{\tilde{E}}_{g, \beta_S, d_2} \cong \langle \T A, \tau_{k_1}
\s_1, \ldots, \tau_{k_\rho} \s_\rho \rangle^{\tilde{E}'}_{g, \beta_S,
d_2}
\end{equation*}
for any $A \in H(\tilde E)^{\oplus n}$, $k_j \in \mathbb{N}\cup
\{0\}$, $\s_j \in H(E) \subset H(\tilde{E})$,
$g \le g_0$, $\beta_S \in NE(S)$ and $d_2 \ge 0$.
\end{theorem}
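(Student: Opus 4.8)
The plan is to combine deformation to the normal cone with the degeneration formula in two stages. In the first stage $X$ and $X'$ are each degenerated so as to split off a common blow-up $Y$, reducing the comparison to relative invariants of the two local models; in the second, a further degeneration along $E$ turns those relative conditions into absolute descendent conditions supported on $E$, i.e.\ into the $f$-special invariants of the statement. The geometric input is that for an ordinary flop the two blow-ups coincide, $Y := \mathrm{Bl}_Z X \cong \mathrm{Bl}_{Z'} X'$, with common exceptional divisor $\mathcal{E} = P(F) \times_S P(F') \subset Y$, and that $\mathcal{E}$ is canonically identified with the infinity divisor $E \subset \tilde E = P(N_{Z/X}\oplus\mathscr{O})$ and with the infinity divisor $E' \subset \tilde E'$. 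Deformation to the normal cone of $Z$ in $X$ degenerates $X$ to $Y \cup_{\mathcal{E}} \tilde E$ (the exceptional divisor of $Y \to X$ glued to $E$), and likewise $X'$ to $Y \cup_{\mathcal{E}} \tilde E'$. Insertions on $X$ and on $X'$ are pulled back to $Y$ through $Y \to X$ and $Y \to X'$; these maps agree off $\mathcal{E}$, so for every $a \in H(X)$ the discrepancy $\mathrm{pr}_X^* a - \mathrm{pr}_{X'}^*(\T a)$ is $\iota_*$ of a class on $\mathcal{E}$, where $\iota : \mathcal{E}\hookrightarrow Y$ --- this is exactly the topological defect of Theorem~\ref{top-defect}. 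Finally, $\T$ commutes with $\bar p_* \oplus d_2$ and the degeneration formula is degree-wise, so the whole argument runs one fiber series $(\beta_S, d_2)$ at a time, and since genus only splits under degeneration the bound $g \le g_0$ is preserved throughout.

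\textbf{First reduction.} The degeneration formula writes $\langle \alpha \rangle^X_{g,\beta}$ as a sum --- over distributions of genus, degree and marked points, and over cohomology-weighted partitions $\mu$ of the contact orders along $\mathcal{E} = E$ --- of products $\langle\, \cdot \mid \mu \rangle^{(Y, \mathcal{E})}\,\langle\, \cdot \mid \mu^\vee \rangle^{(\tilde E, E)}$ with the usual combinatorial gluing factors, and the same formula holds for $\langle \T\alpha \rangle^{X'}$ with $(\tilde E', E')$ in place of $(\tilde E, E)$. Because the relative target $(Y, \mathcal{E})$ and its relative Gromov--Witten theory are literally identical on the two sides, and because the $Y$-insertions agree under $\T$ up to the $\iota$-supported defect --- which, by the divisor-bubbling mechanism of the degeneration formula, only redistributes contributions within the $\mu$-sum (using here $\mathcal{E} \cong E \cong E'$ and $\T H_k = (-1)^{r-k} H_k'$) --- the comparison $\T\langle\alpha\rangle^X \cong \langle\T\alpha\rangle^{X'}$ reduces to matching the relative invariants $\T\langle A \mid \varepsilon, \mu \rangle^{(\tilde E, E)} \cong \langle \T A \mid \varepsilon, \mu \rangle^{(\tilde E', E')}$ over all contact data $\mu$, again one fiber series at a time and with $g \le g_0$.

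\textbf{Second reduction.} To strip off the relative conditions I would degenerate once more, now to the normal cone of $E$ in $\tilde E$: the central fibre is $\tilde E \cup_E P(N_{E/\tilde E} \oplus \mathscr{O})$ with $N_{E/\tilde E}$ a line bundle, so the new component is a $P^1$-bundle over $E$ whose relative (rubber) invariants are given by an explicit calculus. Running this inductively on the total contact order replaces each relative condition along $E$ by a descendent insertion $\tau_k\, \varepsilon$ with $\varepsilon \in H(E) \subset H(\tilde E)$, thereby rewriting $\langle A \mid \varepsilon, \mu \rangle^{(\tilde E, E)}$ as a combination of absolute invariants of $\tilde E$ of $f$-special type; the parallel construction on $\tilde E'$ is $\T$-equivariant because $\T E = E'$ and $\T$ preserves insertions drawn from $H(E)$. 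Hence it suffices to prove the identity displayed in the theorem.

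\textbf{Main obstacle.} The two genuinely delicate points are: (i) verifying that the defect $\mathrm{pr}_X^* a - \mathrm{pr}_{X'}^*(\T a)$, once passed through the degeneration formula, is consistent with $\T$ at the level of the relative invariants of $(\tilde E, E)$ versus $(\tilde E', E')$ --- this is where the description of $\T$ as the graph closure, the identities $\T H_k = (-1)^{r-k} H_k'$, and the symmetric geometry $\mathcal{E} = P(F) \times_S P(F')$ have to cohere; and (ii) performing the rubber degeneration in the second reduction so that every output insertion remains supported on $E$ --- a careless reduction would produce descendents not coupled to $E$ and thus leave the $f$-special class --- which pins down precisely which normal-bundle twists may be absorbed. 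The contributions of purely $f$-extremal classes (multiples of $\ell$) must also be followed through both degenerations, but they cause no new difficulty; and the nontrivial base $S$ is harmless, since everything is organized degree-wise by $(\beta_S, d_2)$ and $\T$ is $\bar p$-linear.
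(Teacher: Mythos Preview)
Your two-stage plan is exactly the paper's (Proposition~\ref{reduction} then Proposition~\ref{p:4.8}), and your first reduction is essentially correct. The paper makes your ``defect is absorbed'' precise not by a cancellation inside the $\mu$-sum but by exploiting the freedom in the lifting $\alpha(0) = (\alpha_1, \alpha_2)$ (Lemma~\ref{switch}): one chooses liftings with $\alpha_1 = \alpha'_1$ on $Y$, which by Lemma~\ref{coh-red} forces $\T\alpha_2 = \alpha'_2$ on the local side. The $(Y,E)$ relative invariants then match term by term, and one is reduced to comparing $\langle A\mid \s,\mu\rangle^{(\tilde E,E)}$ with $\langle \T A\mid \s,\mu\rangle^{(\tilde E',E')}$.

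The second reduction is where you diverge. You degenerate $\tilde E$ along the infinity divisor $E$; the paper degenerates along the \emph{zero section} $Z \subset \tilde E$. Both yield $\tilde E$ glued to a $P^1$-bundle over $E$, but the cohomology liftings behave very differently. In the paper's setup, $\s_j|_Z = 0$, so one may lift $\s_j(0) = (\iota_{1*}\s_j, 0)$, forcing every descendent insertion $\tau_k\s_j$ onto the $P^1$-bundle side $Y_1$; the $Y_2 \cong \tilde E$ side then carries exactly the relative invariant $\langle A \mid \s, \mu\rangle^{(\tilde E, E)}$ one wants to solve for. In your degeneration along $E$, $\s_j|_E = c_1(N_{E/\tilde E})\cdot e_j \ne 0$, and no lifting separates the $\tau_k\s_j$ cleanly --- this is precisely your worry~(ii), which you have flagged but not resolved. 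A Maulik--Pandharipande style rubber argument along $E$ may be workable, but the paper's choice of center $Z$ sidesteps the issue entirely.

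Finally, ``inductively on total contact order'' is too coarse to isolate the leading term. The paper inducts on the $5$-tuple $(g, \beta_S, |\mu|, n, \rho)$ lexicographically (with $\rho$ reversed), and the crux is a dimension count on genus-zero fiber-class invariants of the $P^1$-bundle showing that among all splittings of $\langle A, \tau_{\mu_1-1}\s_1,\ldots,\tau_{\mu_\rho-1}\s_\rho\rangle^{\tilde E}$, the unique term of highest order is $C(\mu)\,\langle A \mid \s, \mu\rangle^{(\tilde E,E)}$ with $C(\mu)$ a nonzero product of $P^1$ fiber integrals. This is where the inversion actually happens, and it needs the argument, not just the slogan.
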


\subsubsection 
{Further reduction to the big quantum ring/quasi-linearity on the local models}
While the degeneration reduction works for higher genera,
for $g = 0$ more can be said.
Using the \emph{topological recursion relation} (TRR) and the
\emph{divisor axiom} (for descendent invariants),
the $\T$-invariance for $f$-special invariants can be completely reduced to
the $\T$-invariance of big quantum rings for local models (Theorem \ref{comp-red-local}).

We then employ the \emph{divisorial reconstruction} \cite{LP}
and the WDVV equation to make a further reduction to
an $\T$-invariance statement about \emph{elementary} $f$-special invariants with at most one special insertion.

To state the result, we assume now $X = X_{loc}
= \tilde E$. Since $X \to S$ is a double projective bundle, $H(X)$
is generated by $H(S)$ and the relative hyperplane classes $h$ for
$Z \to S$ and $\xi$ for $X \to Z$. This leads to another useful reduction: By moving all the classes $h$, $\xi$ and $\psi$ into the last insertion (divisorial reconstruction), the problem is reduced to the case
\begin{equation*}
\langle \bart_1,\ldots,\bart_{n-1}, \bart_n \tau_k h^j\xi^i \rangle_{\beta_S, d_2}^X
\end{equation*}
with $\bart_l \in H(S)$, $d_2 \in \mathbb{Z}$, where $k \ne 0$ only if
$i \ne 0$.

By a further application of WDVV equations, the $\T$-invariance can always be reduced to the case $i \ne 0$ even if $k = 0$. Since $\xi$ is the class of infinity divisor which is within the isomorphism loci of the flop,
such an $\T$-invariance statement is intuitively plausible.
We call it the \emph{type I quasi-linearity} property 
(c.f.~Theorem \ref{QL-I}). \smallskip

The above steps furnish a complete reduction to projective local models $X_{loc}$, which works for any $F$ and $F'$.
\medskip

To proceed, notice that these descendent invariants are encoded by their
generating function, i.e. the so called (big) $J$ function:
For $\tau \in H(X)$,
\begin{equation*}
  J^X(\tau, z^{-1}) := 1 + \frac{\tau}{z} + \sum_{\beta, n, \mu} \frac{q^{\beta}}{n!}
  T_{\mu} \left\langle \frac{T^{\mu}}{z(z-\psi)}, \tau, \cdots, \tau \right\rangle_{0,n+1, \beta}^X.
\end{equation*}
The determination of $J$ usually relies on the existence of $\Bbb
C^\times$ actions. Certain localization data $I_\beta$ coming from
the stable map moduli are of hypergeometric type. For ``good''
cases, say $c_1(X)$ is semipositive and $H(X)$ is generated by
$H^2$, $I(t) = \sum I_\beta\, q^\beta$ determines $J(\tau)$ on the
small parameter space $H^0 \oplus H^2$ through the ``classical''
\emph{mirror transform} $\tau = \tau(t)$. For a simple flop, $X =
X_{loc}$ is indeed semi-Fano toric and the classical Mirror Theorem
(of Lian--Liu--Yau and Givental) is sufficient \cite{LLW}. (It
turns out that $\tau = t$ and $I = J$ on $H^0 \oplus H^2$.)

For general base $S$ with given $QH(S)$, the determination of $QH(P)$ for
a projective bundle $P \to S$ is far more involved. To allow \emph{fiberwise localization} to determine the structure of GW
invariants of $X_{loc}$, the bundles $F$ and $F'$ are then assumed to be split bundles. This is main subject to be studied in Part II of this series \cite{LLWp2}.

\begin{remark}
Results in this paper had been announced, in increasing degree of generalities,
by the authors in various conferences during 2008-2010; 
see e.g.~\cite{Lin, Wang4, LLW2} where more example-studies can be found.
\end{remark}

\subsection{Acknowledgements}
Y.-P.~Lee is partially supported by the NSF;
H.-W.~Lin is partially supported by the MOST;
C.-L.~Wang is partially supported by the MOST and the MOE.
We are particularly grateful to Taida Institute of Mathematical Sciences (TIMS)
for its steady support which makes this long-term collaborative project possible.
Finally, we would like to thank the anonymous referee for pointing out several typographical errors in an earlier version of the paper.

\section{Defect of the classical product} \label{defect}

\subsection{Cohomology correspondence for $P^r$ flops.} \label{s:1.1}
We recall the construction of ordinary flops in \cite{LLW} to fix
notations.

Let $X$ be a smooth complex projective manifold and $\psi:X\to \bar
X$ a flopping contraction in the sense of minimal model theory,
with $\bar\psi: Z\to S$ the restriction map on the exceptional
loci. Assume that
\begin{itemize}
\item[(i)] $\bar\psi$ equips $Z$ with a $P^r$-bundle structure
    $\bar\psi : Z = P(F) \rightarrow S$ for some rank $r + 1$
    vector bundle $F$ over a smooth base $S$, \item[(ii)]
    $N_{Z/X}|_{Z_s} \cong \mathscr{O}_{P^r}(-1)^{\oplus(r +
    1)}$ for each $\bar\psi$-fiber $Z_s$, $s\in S$.
\end{itemize}
Then there is another rank $r + 1$ vector bundle $F'$ over $S$ such
that
\begin{equation*}
N_{Z/X}\cong \mathscr{O}_{P(F)}(-1)\otimes
\bar\psi^*F'.
\end{equation*}

We may blow up $X$ along $Z$ to get $\phi : Y \rightarrow X$. The
exceptional divisor
\begin{equation*}
E = P(N_{Z/X})\cong P(\bar\psi^*F') =
\bar\psi^*P(F')=P(F)\times_SP(F')
\end{equation*}
is a $P^r \times P^{r}$-bundle over $S$. We may then blow down $E$
along another fiber direction $\phi' : Y \rightarrow X'$ to get
another contraction $\psi': X' \to \bar X$, with exceptional loci
$\bar\psi' : Z' = P(F') \rightarrow S$ and
$N_{Z'/X'}|_{\bar\psi'-{\rm fiber}} \cong
\mathscr{O}_{P^{r}}(-1)^{\oplus(r + 1)}$.

We call the $f : X \dashrightarrow X'$  an \emph{ordinary $P^r$
flop}. The various sets and maps are summarized in the following
commutative diagram. \small
\begin{equation*} 
\xymatrix{ &
E\;\ar[ld]_>>>>{\bar\phi}\ar[rd]|\hole^>>>>{\bar\phi'}
\ar@{^{(}->}[r]^j &
Y\;\ar[ld]_>>>>\phi\ar[rd]^>>>>{\phi'}\\
Z\;\ar[rd]_>>>>>>>>{\bar\psi}\ar@{^{(}->}[r]^i &
X\;\ar[rd]_>>>>>>>>>>{\psi} &
Z'\;\ar[ld]|\hole^>>>>>>>>>>{\bar\psi'} \ar@{^{(}->}[r]^{i'} &
X'\;\ar[ld]^>>>>>>>>{\psi'}\\
&S\;\ar@{^{(}->}[r]^{j'}&\overline{X}& }
\end{equation*} \normalsize
where the normal bundle of $E$ in $Y$ is
\begin{equation*}
N_{E/Y} = \bar\phi^*\mathscr{O}_{P(F)}(-1)\otimes
\bar\phi'^*\mathscr{O}_{P(F')}(-1).
\end{equation*}

\medskip

First of all, we have found a \emph{canonical} correspondence
between the cohomology groups of $X$ and $X'$.

\begin{theorem} [\cite{LLW}] \label{t:1.1}
For an ordinary $P^r$ flop $f:X\dashrightarrow X'$, the graph
closure $T := [\bar\Gamma_f]\in A(X \times X')$ identifies the
Chow motives $\hat{X}$ of $X$ and $\hat{X'}$ of $X'$, i.e.\ $\hat X
\cong \hat X'$ via $T^t\circ T = \Delta_X$ and $T\circ T^t =
\Delta_{X'}$. In particular, $\T := T_*: H(X) \to H(X')$ preserves
the Poincar\'e pairing on cohomology groups.
\end{theorem}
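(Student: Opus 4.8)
The plan is to show that the correspondence $T = [\bar\Gamma_f]$ and its transpose $T^t$ are mutually inverse self-maps of the Chow motives, i.e.\ $T^t \circ T = \Delta_X$ and $T \circ T^t = \Delta_{X'}$; the statement about the Poincar\'e pairing then follows formally, since a correspondence realizing a motive isomorphism automatically respects the intersection pairing (the pairing is intrinsic to the motive). So the whole content is the computation of the composition of correspondences $T^t \circ T \in A(X \times X)$ and showing it equals the diagonal.

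First I would work on the common resolution. Recall $\phi\colon Y \to X$ and $\phi'\colon Y \to X'$ are the two blow-downs of the single blow-up $Y = \mathrm{Bl}_Z X$, with exceptional divisor $E = P(F)\times_S P(F')$. The key observation is that the graph closure $\bar\Gamma_f \subset X\times X'$ is exactly the image of $Y$ under $(\phi,\phi')\colon Y \to X\times X'$, and this map is birational onto its image, so as a correspondence $T = (\phi,\phi')_*[Y] = \phi'_* \circ \phi^*$ (up to checking that $(\phi,\phi')$ is generically injective, which holds since $f$ is birational). Dually $T^t = \phi_* \circ \phi'^*$. Therefore $T^t \circ T = \phi_* \, \phi'^* \phi'_* \, \phi^*$ as an operator on $A(X)$, and the crux is to evaluate the ``middle'' composite $\phi'^* \phi'_*$ on classes pulled back from $X$.

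The main technical step is the \emph{excess/key formula} for $\phi'^*\phi'_*$. Since $\phi'$ is a blow-down of the divisor $E$ with $N_{E/Y}$ the explicit line bundle $\bar\phi^*\mathscr{O}(-1)\otimes\bar\phi'^*\mathscr{O}(-1)$, one has for any $\alpha \in A(Y)$ a decomposition $\phi'^*\phi'_*\alpha = \alpha + j_*(\text{correction supported on }E)$, where the correction is computed from the relation defining $A^*(Y)$ over $A^*(X')$ (the blow-up formula of Fulton, together with the projective bundle / Leray--Hirsch description of $A^*(E)$). I would then push forward by $\phi_*$: classes already pulled back from $X$ via $\phi^*$ are killed off outside $E$ correctly, and the $E$-supported correction, when pushed down by $\phi_*$ to $X$, must be shown to vanish for dimension/degree reasons. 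Concretely, one checks that $\phi_* j_*$ of every monomial in the generators $h = \bar\phi^*c_1(\mathscr{O}_{P(F)}(1))$, $h' = \bar\phi'^*c_1(\mathscr{O}_{P(F')}(1))$, and classes from $S$ either vanishes or reassembles into the identity; this is where the precise numerology of ``ordinary'' flops (both bundles of rank $r+1$, normal bundle of the stated split form) is used, and it is the part that genuinely could fail for a non-ordinary flop.

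The hard part, then, is precisely this last bookkeeping: controlling $\phi_* \, \phi'^*\phi'_* \, \phi^*$ monomial-by-monomial in the Leray--Hirsch basis of $A^*(E)$ relative to $S$. I expect one organizes it by first reducing to the fiber situation over a point of $S$ (so $E_s = P^r\times P^r \subset Y_s$ with $Y_s = \mathrm{Bl}_{P^r}(\mathscr{O}(-1)^{\oplus r+1})$, the local Atiyah-type model), verifying $T^t\circ T = \Delta$ fiberwise by the standard simple-flop computation, and then invoking a relative/family version (deformation to the normal cone or a Leray--Hirsch argument over $S$) to conclude the identity globally. The symmetric computation gives $T\circ T^t = \Delta_{X'}$ with the roles of $F$ and $F'$ exchanged. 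Finally, since $T$ is an isomorphism of motives, functoriality of the pairing under correspondences gives $(\mathcal{T} a. \mathcal{T} b)^{X'} = (T^t T a. b)^X = (a.b)^X$, which is the asserted preservation of the Poincar\'e pairing.
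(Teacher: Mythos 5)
Your strategy coincides with the one the paper (following \cite{LLW}) actually uses: realize $T_*=\phi'_*\phi^*$ and $(T^t)_*=\phi_*\phi'^*$ on the common resolution $Y$, write $\phi'^*\T a-\phi^*a=j_*\lambda$ with $\lambda$ supported on $E$, and observe that $\phi_*j_*\lambda=i_*\bar\phi_*\lambda$ dies. The one step you defer as ``bookkeeping'' is precisely the content of the Proposition in Section~\ref{3-prod}: there $\lambda$ is computed to be $\sum(a.\check\tb_i^{k-j}H_{r-j})\,\tb_i^{k-j}\,(x^j-(-y)^j)/(x+y)$, whose degree in each of $x=\bar\phi^*h$ and $y=\bar\phi'^*h'$ is at most $j-1\le r-1$, strictly below the fiber dimension $r$ of either ruling of $E=P(F)\times_SP(F')$; hence $\bar\phi_*\lambda=0=\bar\phi'_*\lambda$ and both compositions give the identity. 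This makes your proposed fiberwise reduction over $S$ unnecessary (the relative Leray--Hirsch basis handles the base directly), and it is also exactly where the ``ordinary'' hypothesis enters, as you anticipate. Two small cautions: the derivation of that formula in Section~\ref{3-prod} quotes the pairing invariance, so to avoid circularity one should (as you in fact propose) obtain the correction from Fulton's blow-up/excess formula for $\phi'$ rather than from duality; and since the theorem asserts an identity of correspondences rather than merely of the induced maps on $A(X)$, one should run the same computation for the product families $X\times V\dashrightarrow X'\times V$ and invoke Manin's identity principle.
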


In practice, the correspondence $T$ associates a map on Chow
groups:
\begin{equation*}
\T: A(X)\to A(X');\quad W\mapsto p'_*(\bar\Gamma_f.p^*W) =
\phi'_*\phi^*W
\end{equation*}
where $p$ (resp. $p'$) is the projection map from $X \times X'$ to
$X$ (resp. $X'$).

Secondly, parallel to the procedure in \cite{LLW}, we need to
determine the explicit formulae for the associated map $\T$
restricted to $A(Z)$. The Leray--Hirsch theorem says that
\begin{equation*}
A(Z) = \bar\psi^* A(S)[h]/f_F(h)
\end{equation*}
where $f_F(\lambda) = \lambda^{r + 1} + \bar \psi^* c_1(F)\lambda^r
+ \cdots +\bar\psi^* c_{r + 1}(F)$ is the Chern polynomial of $F$
and $h = c_1(\mathscr{O}_{P(F)}(1))$. Thus a class $\alpha \in
A(Z)$ has the form $\alpha = \sum_{i = 0}^r h^i\bar\psi^*a_i$ for
some $a_i\in A(S)$.

By the pull-back formula from the intersection theory, it is
easy to see that for $a \in A_k(Z)$ we have
\begin{equation*}
\phi^*(i_* a) = j_*\Big(c_r(\mathscr{E}).\bar\phi^* a \Big) \in A_k(Y)
\end{equation*}
where $\mathscr{E}$ is the excess normal bundle defined by
\begin{equation*}
 0\to N_{E/Y}\to \phi^* N_{Z/X} \to \mathscr{E}\to 0.
\end{equation*}

By the functoriality of pull-back and push-forward together with
the above formula, we can conclude from $\T (i_*(\sum
h^i\bar\psi^*a_i)) =  \sum i'_*(\T(i_*(h^i)).i'_*\bar\psi'^*a_i)^{Z'}$ that $\T$
restricted to $A(Z)$ is $A(S)$-linear. Here we identify the
ring $A(S)$ with its isomorphic images in $A(Z)$ and $A(Z')$
via $\bar\psi^*$ and $\bar\psi'^*$ respectively.

Under such an identification, we will abuse notations to denote
$c_i(F)$, $\bar\psi^* c_i(F)$ and $\bar\psi'^* c_i(F)$ by the same
symbol $c_i$. Similarly we denote $c_i(F')$, $\bar\psi^* c_i(F')$
and $\bar\psi'^* c_i(F')$ by $c_i'$. We use this abbreviation for
any class in $A(S)$. And for $\alpha \in A(Z)$ we often omit
$i_*$ from $i_*\alpha$ when $\alpha$ is regarded as a class in
$A(X)$, unless possible confusion should arise. Similarly, we do
these for $\alpha' \in A(Z') \hookrightarrow A(X')$.

The $A(S)$-linearity of $\T$ restricted to $A(Z)$ allows us to
focus on the study of a basis for $A(Z)$ over $A(S)$. Recall
that for a simple $P^r$ flop we have the basic transformation
formula $\T(h^k) = (-1)^{r-k}  h'^k$. Unfortunately, for a general
$P^r$ flop, this does not hold any more, so a better candidate has
to be sought out.

Note that the key ingredient in the pull-back formula is
$c_r(\mathscr{E})$. From the Euler sequence
\begin{equation*}
0\to \mathscr{O}_{Z'}(-1)\to \bar\psi'^* F' \to Q_{F'}
\to 0
\end{equation*}
and the short exact sequence defining the excess normal bundle
$\mathscr{E}$, we get $\mathscr{E} =
\bar\phi^*\mathscr{O}_{P(F)}(-1) \otimes \bar\phi'^*Q_{F'}$. A
simple computation leads to
\begin{equation*}
c_r(\mathscr{E}) = (-1)^r(\bar\phi^*h^r -
\bar\phi'^*H'_1\bar\phi^*h^{r - 1} +
\bar\phi'^*H'_2\bar\phi^*h^{r-2} + \cdots +
(-1)^r\bar\phi'^*H'_r),
\end{equation*}
where $H'_k = c_k(Q_{F'})$. Explicitly,
\begin{equation*}
H'_k = h'^k + c_1'h'^{k - 1} + \cdots + c_k'
\end{equation*}
where $h' = c_1(\mathscr{O}_{P(F')}(1))$. Similarly, we denote
\begin{equation*}
H_k = c_k(Q_{F}) = h^k + c_1h^{k - 1} + \cdots + c_k.
\end{equation*}
Notice that $H_k = 0 = H'_k$ for $k > r$. Finally, we find that
$H_k$, $H'_k$ turn out to be the correct choice.

\begin{proposition}\label{H}
For all positive integers $k \leq r$,
\begin{equation*}
\T(H_k) = (-1)^{r-k}  H'_k.
\end{equation*}
\end{proposition}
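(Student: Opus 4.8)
The plan is to compute $\T(H_k)=\phi'_*\phi^*(i_*H_k)$ directly, feeding the excess-intersection pull-back formula and the explicit expansion of $c_r(\mathscr{E})$ recorded above into the blow-down $\phi'$, and then watching Segre--Chern duality on $\bar\psi\colon Z\to S$ collapse the resulting sum to a single term.

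First I would rewrite the correspondence on the exceptional side. Since $\phi|_E=i\circ\bar\phi$ and $\phi'|_E=i'\circ\bar\phi'$, the pull-back formula gives $\phi^*(i_*H_k)=j_*\big(c_r(\mathscr{E})\cdot\bar\phi^*H_k\big)$, and pushing forward by $\phi'$ together with $\phi'_*j_*=i'_*\bar\phi'_*$ yields
\begin{equation*}
\T(H_k)=i'_*\,\bar\phi'_*\big(c_r(\mathscr{E})\cdot\bar\phi^*H_k\big).
\end{equation*}
Substituting $c_r(\mathscr{E})=(-1)^r\sum_{m=0}^r(-1)^m\,\bar\phi'^*H'_m\cdot\bar\phi^*h^{r-m}$ and applying the projection formula for $\bar\phi'$ to pull out the $H'_m$'s, this becomes (omitting $i'_*$ as in the stated convention)
\begin{equation*}
\T(H_k)=(-1)^r\sum_{m=0}^r(-1)^m\,H'_m\cdot\bar\phi'_*\bar\phi^*\big(h^{r-m}H_k\big).
\end{equation*}
Because $E=P(F)\times_SP(F')=Z\times_SZ'$ is a fiber square over $S$ with flat vertical maps $\bar\psi,\bar\psi'$, base change gives $\bar\phi'_*\bar\phi^*=\bar\psi'^*\bar\psi_*$ as maps $A(Z)\to A(Z')$, which reduces the problem to the fiber integral $\bar\psi_*\big(h^{r-m}H_k\big)\in A(S)$.

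For that last piece I would use $\bar\psi_*h^{r+a}=s_a(F)$ — a consequence of the relation $f_F(h)=0$ in $A(Z)$ — which in generating-function form reads $\sum_a s_a(F)y^a=c(F)(y)^{-1}$. Combined with $\sum_kH_ky^k=c(F)(y)/(1-hy)$ from the Euler sequence for $Q_F$, the projection formula gives
\begin{equation*}
\sum_k\bar\psi_*\big(h^{r-m}H_k\big)\,y^k=c(F)(y)\cdot\bar\psi_*\!\left(\frac{h^{r-m}}{1-hy}\right)=c(F)(y)\,s(F)(y)\,y^m=y^m,
\end{equation*}
so $\bar\psi_*\big(h^{r-m}H_k\big)=\delta_{mk}$. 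Plugging this in, only $m=k$ survives and $\T(H_k)=(-1)^{r+k}H'_k=(-1)^{r-k}H'_k$.

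The computation is short once it is set up; the places where I expect to have to be careful are the justification of the base-change identity $\bar\phi'_*\bar\phi^*=\bar\psi'^*\bar\psi_*$ for $E=Z\times_SZ'$ and the bookkeeping of Segre-class conventions. It is exactly here that the choice of $H_k=c_k(Q_F)$ rather than the naive $h^k$ is essential: the Chern--Segre identity $c(F)s(F)=1$ is what makes the sum over $m$ telescope to the single diagonal term, which is both the mechanism behind the clean formula and the reason the naive transformation $\T(h^k)=(-1)^{r-k}h'^k$ fails over a nontrivial base.
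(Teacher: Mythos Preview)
Your proof is correct and follows the same overall strategy as the paper: both compute $\T(H_k)=\bar\phi'_*\bigl(c_r(\mathscr{E})\cdot\bar\phi^*H_k\bigr)$ by expanding $c_r(\mathscr{E})$ into the sum $(-1)^r\sum_m(-1)^m\bar\phi'^*H'_m\cdot\bar\phi^*h^{r-m}$ and then showing that only the $m=k$ term survives. The difference is in how that survival is established. The paper argues directly on $E$ with the identities $\bar\phi'_*\bar\phi^*h^i=0$ for $i<r$ and $\bar\phi'_*\bar\phi^*h^r=[Z']$, then splits into three cases $i>r-k$, $i<r-k$, $i=r-k$, using the Chern polynomial relation $f_F(h)=0$ to kill the first range. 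You instead invoke flat base change $\bar\phi'_*\bar\phi^*=\bar\psi'^*\bar\psi_*$ for the fiber square $E=Z\times_SZ'$ and then compute $\bar\psi_*(h^{r-m}H_k)=\delta_{mk}$ in one stroke via the generating-function identity $c(F)(y)\,s(F)(y)=1$. Your packaging is a bit cleaner and makes the role of Chern--Segre duality completely transparent; the paper's case split is more elementary and avoids appealing to base change. Both are short and either would be acceptable here.
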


\begin{proof}
First of all, we have the basic identities: $h^{r+1} + c_1 h^r +
\cdots + c_{r + 1} = 0$, $\bar\phi'_* \bar\phi^*h^i = 0$ for all $i
< r$ and $\bar\phi'_* \bar\phi^* h^r = [Z']$. The latter two follow
from the definitions and dimension consideration.

In order to determine $\T( H_k) =
\bar\phi'_*(c_r(\mathcal{E}).\bar\phi^*H_k)$, we need to take care
of the class
$$
\bar\phi'_*(\bar\phi'^*H'_{r-i}\bar\phi^*h^i.\bar\phi^*H_k)
$$ 
with $0 \leq i \leq r$, here $H'_0 := 1$.

If $i > r - k$, then
\begin{align*}
&\bar\phi'_*(\bar\phi'^*H'_{r-i}\bar\phi^*h^i.\bar\phi^*H_k)
= \bar\phi'_*(\bar\phi'^*H'_{r-i}\bar\phi^*(h^{k+i} + c_1h^{k +i- 1}
+ \cdots + c_kh^i))\\
&= -\bar\phi'_*(\bar\phi'^*H'_{r-i}\bar\phi^*(c_{k+1}h^{i-1} +
c_{k+2}h^{i-2} +\cdots + c_{r+1}h^{i+k-r-1})) = 0
\end{align*}
since the power in $h$ is at most $i - 1 < r$.

If $i < r - k$, then again $\bar\phi'_*(\bar\phi'^*H'_{r-i}
\bar\phi^*h^i.\bar\phi^*H_k) = 0$ since the power in $h$ is at most
$i + k < r$.

For the remaining case $i = r - k$,
\begin{equation*}
\bar\phi'_*(\bar\phi'^*H'_{r-i}\bar\phi^*h^i.\bar\phi^*H_k)
= \bar\phi'_*(\bar\phi'^*H'_{r-i}\bar\phi^*h^r) = H'_{r-i} = H'_k.
\end{equation*}
We conclude that
\begin{equation*}
\T( H_k) = (-1)^r  \sum_{i=0}^r(-1)^{r-i}
\bar\phi'_*(\bar\phi'^*H'_{r-i}\bar\phi^*h^i.\bar\phi^*H_k) =
(-1)^{r-k} H'_k.
\end{equation*}
\end{proof}

\begin{remark}
Unlike simple $P^r$ flops, here the image class of $h^k$ under $\T$
looks more complicated. As a simple corollary of the above
proposition, we may show, by induction on $k$, that for all $k\in
\mathbb{N}$,
\begin{equation*}
\T( h^k) = (-1)^{r - k}  (a_0h'^k + a_1h'^{k - 1} + \cdots
+ a_k) \in A(Z')
\end{equation*}
where $a_0 = 1$ and $a_k \in A(S)$ are determined by the
recursive relations:
\begin{equation*}
c_k' = a_k - c_1 a_{k - 1} + c_2 a_{k - 2} + \cdots + (-1)^k c_k.
\end{equation*}
And symmetrically
\begin{equation*}
\T^*( h'^k) = (-1)^{r - k}  (a'_0h^k + a'_1h^{k - 1} +
\cdots + a'_k) \in A(Z)
\end{equation*}
with $a'_0=1,\ \ a'_k = c'_1 a'_{k - 1} - c'_2 a'_{k -2} + \cdots +
(-1)^{k - 1}c'_k + c_k $.

To put these formulae into perspective, we consider the virtual
bundles
\begin{equation*}
A := F' - F^*;\qquad A' := F - F'^*.
\end{equation*}
Then $a_k = c_k(A)$ and $a_k' = c_k(A')$. Notice that since $a_k$
and $a_k'$ are Chern classes of virtual bundles, they may survive
even for $k \ge r + 1$.

It is also interesting to notice that the explicit formula reduces
to
\begin{equation*}
\T ( h^k) = (-1)^{r - k}  h'^k
\end{equation*}
without lower order terms precisely when $F' = F^*$, the dual of
$F$.
\end{remark}

\subsection{Triple product} \label{3-prod}
Let $\{\tb_i^k\}$ be a basis of
$H^{2k}(S)$ and $\{\hatt \tb_i^k \}\subset H^{2(s-k)}(S)$
be its dual basis where $s = \dim S$.
It is an easy but quite crucial discovery that
the dual basis of the canonical basis $\{\tb_i^kh^j\}$ in $H(Z)$
can be expressed in terms of $\{H_k\}_{k\geq 0}$.

\begin{lemma}\label{dual-basis}
The dual basis of $\{\tb_i^{k-j}h^j\}_{j\leq \min\{k,r\}}$ in
$H^{2k}(Z)$ is $\{\hatt \tb_i^{k-j} H_{r-j}\}_{j\leq \min\{k,r\}}$ in
$H^{2(r + s - k)}(Z)$.
\end{lemma}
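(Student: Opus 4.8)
The plan is to reduce the assertion to computing a single Poincar\'e pairing matrix on $Z$ and showing it equals the identity. By the Leray--Hirsch theorem, $\{1, h, \dots, h^r\}$ is an $H(S)$-module basis of $H(Z)$, so $\{\tb_i^{k-j} h^j\}_{j \le \min\{k,r\}}$ is a basis of $H^{2k}(Z)$; and since $H_{r-j} = h^{r-j} + c_1 h^{r-j-1} + \cdots$ differs from $h^{r-j}$ only by $H(S)$-multiples of strictly lower powers of $h$, the family $\{\hatt \tb_i^{k-j} H_{r-j}\}_{j \le \min\{k,r\}}$ is, after a unitriangular change of $H(S)$-basis, also a basis of $H^{2(r+s-k)}(Z)$. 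As these two graded pieces are Poincar\'e dual of equal rank, it suffices to prove
\[
\bigl(\tb_i^{k-j} h^j\,.\,\hatt \tb_{i'}^{k-j'} H_{r-j'}\bigr)^Z = \delta_{jj'}\,\delta_{ii'}.
\]

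First I would pass to $S$ via $(\alpha.\beta)^Z = (\bar\psi_* (\alpha\beta))^S$ and the projection formula: because $\tb_i^{k-j}$ and $\hatt \tb_{i'}^{k-j'}$ are pulled back from $S$, the left-hand side equals $\bigl(\tb_i^{k-j}\,\hatt \tb_{i'}^{k-j'}\,.\,\bar\psi_*(h^j H_{r-j'})\bigr)^S$. Thus the whole lemma is equivalent to the fibrewise identity $\bar\psi_*(h^j H_{r-j'}) = \delta_{jj'}$ for $0 \le j, j' \le r$; granting it, the pairing collapses to $\delta_{jj'}(\tb_i^{k-j}.\hatt \tb_{i'}^{k-j})^S = \delta_{jj'}\delta_{ii'}$ by the defining property of the dual basis on $S$.

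To establish the fibrewise identity I would expand $H_{r-j'} = \sum_{m\ge 0} c_m h^{r-j'-m}$ (with $c_m = c_m(F)$, vanishing for $m > r+1$) and feed in the standard projective-bundle relations $\bar\psi_* h^a = 0$ for $a < r$, $\bar\psi_* h^r = 1$, together with their consequence $\bar\psi_* h^{r+a} = s_a(F)$ for $a \ge 0$ --- forced by an immediate induction from the Chern relation $f_F(h) = h^{r+1} + c_1 h^r + \cdots + c_{r+1} = 0$, where $s_a(F)$ are characterized by $\sum_m c_m\, s_{a-m}(F) = \delta_{a0}$. This yields $\bar\psi_*(h^j H_{r-j'}) = \sum_{m=0}^{r-j'} c_m\, s_{j-j'-m}(F)$ (Segre classes of negative index being $0$), which vanishes for $j < j'$ on degree grounds, equals $1$ for $j = j'$, and for $j > j'$ equals the complete convolution $\sum_{m=0}^{j-j'} c_m\, s_{j-j'-m}(F) = 0$ by Chern--Segre reciprocity. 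The only point that deserves care is this last case: one must check that the truncation $m \le r-j'$ coming from the pushforward does not cut short the convolution, and that is guaranteed exactly by $j \le r$, i.e.\ by $\tb_i^{k-j} h^j$ being a legitimate basis element. I do not expect a genuine obstacle beyond this bookkeeping; the computation is essentially the linear-algebraic core underlying the topological-defect results of this section.
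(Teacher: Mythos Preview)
Your argument is correct. Its organization differs from the paper's in a mildly useful way. The paper verifies the pairing by a direct case analysis on $Z$: for $j=j'$ it observes that $\tb_i^{k-j}\hatt\tb_i^{k-j}$ is a top class on $S$, so only the $h^r$-term of $H_{r-j}$ survives; for $j'>j$ it kills the pairing by a pure degree count on $H(S)$; and for $j'<j$ it uses the Chern polynomial relation $f_F(h)=0$ to rewrite $h^jH_{r-j'}$ so that every remaining monomial $c_{r-j'+z}h^{j-z}$ has base-class factor of too-high degree. You instead push everything to $S$ via the projection formula and reduce to the single fibrewise identity $\bar\psi_*(h^jH_{r-j'})=\delta_{jj'}$, which you prove uniformly by the Chern--Segre reciprocity $\sum_m c_m\,s_{a-m}=\delta_{a0}$. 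This packages all three of the paper's cases into one line and makes the role of the Segre relations (which reappear later in \S\ref{3-prod} and \S\ref{q-correction}) explicit from the outset; the paper's version, on the other hand, avoids introducing Segre classes at this early stage and keeps the argument purely in degree bookkeeping. The substantive content is the same---your final ``truncation'' check that $j\le r$ guarantees the convolution is complete is exactly the paper's observation that $j-j'\le r-j'$ keeps the Chern-polynomial rewrite within range.
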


\begin{proof}
We have to check that $(\tb_i^{k-j}h^j.\hatt \tb_i^{k-j} H_{r-j}) = 1$
and $(\tb_i^{k-j}h^j.\hatt \tb_i^{k-j'} H_{r-j'}) = 0$ for any $j\ne j'$.
Indeed,
\begin{equation*}
(\tb_i^{k-j}h^j.\hatt \tb_i^{k-j} H_{r-j}) = \tb^s(h^r + c_1h^{r-1} +
\cdots)= \tb^sh^r = 1
\end{equation*}
since $\tb^sc_i = 0$ for all $i\geq 1$ by degree consideration.

\begin{notations}
When $X$ is a bundle over $S$, classes in $H(S)$ may be considered as
classes in $H(X)$ by the obvious pullback, which we often omit
in the notations.
To avoid confusion, we consistently employ the notation $\check \tb_i$
as the dual class of $\tb_i \in H(S)$
with respect to the Poincar\'e pairing in $S$.
The ``raised'' index form, e.g.~$T^{\mu}$ as the dual of $T_{\mu} \in H(X)$,
is reserved for duality with respect to the Poincar\'e pairing in $X$.
\end{notations}

Now if $j'> j$ then
$$k-j + (s-(k-j')) = s + (j'-j) > s,$$
which implies that $\tb_i^{k-j}\hatt \tb_i^{k-j'} =0$. Conversely, if
$j' < j$ then $\tb_i^{k-j}\hatt \tb_i^{k-j'} \in H^{2(s-(j-j'))}(S)$ and
\begin{align*}
h^jH_{r-j'} &= h^{r+(j-j')} + c_1h^{r+(j-j')-1}
+ \cdots + c_{r-j'}h^j \\
&= - c_{r-j'+1}h^{j-1} - \cdots - c_{r+1}h^{j-j'-1}.
\end{align*}
Again since
$$(s-(j-j'))+(r-j'+z) = s+(r+z-j) > s$$
for $z\geq 1$, we
have $\tb_i^{k-j}\hatt \tb_i^{k-j'} c_{r-j'+z}h^{j-z-1} = 0$. The result
follows.
\end{proof}

Now we can determine the difference of the pullback classes of
$a$ and $\T a$ as follows.

\begin{proposition}
For a class $a \in H^{2k}(X)$, let $a' = \T a$ in $X'$.
Then
\begin{equation*}
\phi'^* a' = \phi^* a + j_*\sum_i \sum_{1\leq j \leq
\min\{k,r\}}(a.\hatt \tb_i^{k-j} H_{r-j})\tb_i^{k-j}\frac{x^j-(-y)^j}{x + y}
\end{equation*}
where $x = \bar\phi^*h$, $y = \bar\phi'^*h'$.
\end{proposition}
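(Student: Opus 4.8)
The plan is to compute $\phi'^* a'$ directly using the preceding Proposition~\ref{H} and the pull-back formula recalled above, and to read off the correction term via the dual basis description in Lemma~\ref{dual-basis}. First I would write $a' = \T a = \phi'_* \phi^* a \in A(X')$, so that $\phi'^* a'$ and $\phi^* a$ are both classes on $Y$ differing by something supported on the exceptional divisor $E$; more precisely, since $\phi_* \phi^* = \mathrm{id}$ (blow-up along a smooth center) and $\T^t \circ \T = \mathrm{id}$ by Theorem~\ref{t:1.1}, the class $\phi'^*a' - \phi^* a$ pushes forward to zero under $\phi_*$, hence lies in $j_* A(E)$. So it suffices to identify this class in $j_* A(E)$ by intersecting against a basis of $A(E)$, or equivalently by using the explicit excess-intersection computation. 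I would use the identity $\phi^*(i_* \alpha) = j_*(c_r(\mathscr{E}) . \bar\phi^* \alpha)$ together with the computed value of $c_r(\mathscr{E})$ in terms of $x = \bar\phi^* h$ and $y = \bar\phi'^* h'$.

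The second step is to decompose $a$ along $Z$. By the $A(S)$-linearity of $\T|_{A(Z)}$ and the fact that the relevant geometry is concentrated on $Z$, the difference $\phi'^* a' - \phi^* a$ only sees the "components of $a$ along $Z$", which are governed by the coefficients of $a$ in the dual basis: by Lemma~\ref{dual-basis}, writing things in the canonical basis $\{\tb_i^{k-j} h^j\}$ of $H^{2k}(Z)$, the coefficient attached to $\tb_i^{k-j} h^j$ is $(a . \hatt\tb_i^{k-j} H_{r-j})^X$ (paired in $X$, using that $\{\hatt\tb_i^{k-j}H_{r-j}\}$ is the dual basis). For each such basis element $i_*(\tb_i^{k-j} h^j)$ I would compute $\phi'^*\T(i_*(\tb_i^{k-j}h^j)) - \phi^*(i_*(\tb_i^{k-j}h^j))$ on $Y$ using Proposition~\ref{H} (to express $\T(h^j)$-type classes via the $H'_k$) and the formula for $c_r(\mathscr{E})$, together with the basic identities $\bar\phi'_* \bar\phi^* h^i = 0$ for $i < r$ and $= [Z']$ for $i = r$, and their $\bar\phi$-analogues. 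The resulting universal expression should collapse, after using $h^{r+1} = -(c_1 h^r + \cdots + c_{r+1})$, to the "telescoping" polynomial $\dfrac{x^j - (-y)^j}{x+y}$ in $x,y$, times the pullback of $\tb_i^{k-j}$; summing over $i$ and over $1 \le j \le \min\{k,r\}$ (the $j=0$ term contributes nothing since then there is no correction) yields the stated formula.

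The main obstacle I expect is bookkeeping: correctly tracking which classes are pulled back from where (the $\bar\phi$ versus $\bar\phi'$ directions on $E = P(F)\times_S P(F')$), making sure the pairing in the coefficient $(a.\hatt\tb_i^{k-j}H_{r-j})$ is the one on $X$ and not on $Z$ or $S$, and verifying that the excess-bundle computation combined with $\T(h^j)$ really produces the clean geometric series $\sum_{l=0}^{j-1} x^{j-1-l}(-y)^l = (x^j-(-y)^j)/(x+y)$ rather than some expression still carrying Chern classes of $F, F'$. A clean way to organize this is to first prove the formula after restricting everything to a single $\bar\psi$-fiber (the simple-flop case, where $\T(h^k) = (-1)^{r-k} h'^k$ and the identity is classical), and then promote it to the relative setting over $S$ by the $A(S)$-linearity established above together with Proposition~\ref{H}, which is precisely the device that replaces the naive $h^k$ by the corrected classes $H_k$. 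With that reduction in hand the fiberwise computation is the three-threefold Atiyah-flop calculation dressed up with a projective bundle, and the global statement follows by linearity over $A(S)$.
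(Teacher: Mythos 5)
Your first step is sound and agrees with the paper: since $\T^t\T = \mathrm{id}$ (Theorem~\ref{t:1.1}) and $\phi,\phi'$ are isomorphisms off $E$, the difference $\phi'^*a'-\phi^*a$ is supported on $E$ and can be written as $j_*\lambda$ with $\lambda\in H^{2(k-1)}(E)$, and Lemma~\ref{dual-basis} is indeed the right device for producing the coefficients $(a.\hatt\tb_i^{k-j}H_{r-j})$. The genuine gap is in how you propose to compute $\lambda$. The excess-intersection identity $\phi^*(i_*\alpha)=j_*(c_r(\mathscr{E}).\bar\phi^*\alpha)$ only applies to classes pushed forward from $Z$; evaluating $\phi'^*\T(i_*(\tb_i^{k-j}h^j))-\phi^*(i_*(\tb_i^{k-j}h^j))$ computes the defect of $i_*(\tb_i^{k-j}h^j)\in H^{2(k+r+1)}(X)$, a different class of a different degree from $a\in H^{2k}(X)$, and no linearity argument transports that answer back to $a$: the dual-basis expansion describes $a|_Z$, not $a$ itself. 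What actually pins down $\lambda$ is the self-intersection formula on $E$, namely $j^*j_*\lambda=c_1(N_{E/Y})\lambda=-(x+y)\lambda$, combined with $j^*(\phi'^*a'-\phi^*a)=\bar\phi'^*(a'|_{Z'})-\bar\phi^*(a|_Z)$. Expanding both restrictions in the dual bases and using that $\T$ preserves the Poincar\'e pairing together with $\T(\hatt\tb_i^{k-j}H_{r-j})=(-1)^j\hatt\tb_i^{k-j}H'_{r-j}$ (from Proposition~\ref{H}) gives $(a'.\hatt\tb_i^{k-j}H'_{r-j})=(-1)^j(a.\hatt\tb_i^{k-j}H_{r-j})$; dividing by $-(x+y)$ is precisely where the factor $(x^j-(-y)^j)/(x+y)$ comes from. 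It is not an output of the excess-bundle calculation, and your worry that the latter might not collapse to the clean geometric series is well founded.

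Two further points your outline would need to address. First, $j^*j_*$ has a nontrivial kernel on $H(E)$, so $\lambda$ is not determined by its restriction to $E$ without an additional argument; the paper disposes of this with a moving-lemma argument showing that kernel elements cannot occur in $\phi'^*a'-\phi^*a$ (if $j_*\lambda|_E=0$ then $j_*\lambda$ moves off $E$ and its $\phi'$-pushforward vanishes, forcing it to be trivial). Second, the proposed reduction to a single $\bar\psi$-fiber followed by ``promotion by $A(S)$-linearity'' does not apply as stated: the $A(S)$-linearity established in Section~\ref{s:1.1} concerns $\T$ restricted to $A(Z)$, not the defect map on all of $A(X)$, and the coefficients $(a.\hatt\tb_i^{k-j}H_{r-j})$ genuinely couple base and fiber directions, which is exactly why the statement is phrased through the $H_k$ rather than the naive powers $h^k$.
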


\begin{proof}
Recall that
$$N_{E/Y} = \bar\phi^*\mathscr{O}_{Z}(-1)\otimes \bar\phi'^*\mathscr{O}_{Z'}(-1)$$
and hence $c_1(N_{E/Y}) = -(x + y)$.
Since the difference $\phi'^* a' - \phi^* a$ has
support in $E$, we may write $\phi'^* a' - \phi^* a =
j_*\lambda$ for some $\lambda \in H^{2(k - 1)}(E)$. Then
\begin{equation*}
(\phi'^* a' - \phi^* a)|_E = j^*j_*\lambda =
c_1(N_{E/Y})\lambda = -(x+y)\lambda.
\end{equation*}

Notice that while the inclusion-restriction map $j^*j_*$ on
$H(E)$ may have non-trivial kernel, elements in the kernel never
occur in $\phi'^* a' - \phi^* a$ by the Chow moving lemma.
Indeed if $j^*j_* \lambda \equiv j_*\lambda|_E = 0$ then
$j_*\lambda$ is rationally equivalent to a cycle $\lambda'$
disjoint from $E$. Applying $\phi'_*$ to the equation
$$\phi'^* a' - \phi^* a = j_*\lambda \sim \lambda'$$
gives rise to
\begin{equation*}
\phi'_*\lambda' \sim \phi'_*\phi'^* a' - \phi'_*\phi^* a =
 a' - a' = 0.
\end{equation*}
This leads to $\lambda' \sim 0$ on $Y$.

Hence
\begin{equation*}
\lambda = -\frac{1}{x+y}((\phi'^* a')|_E - (\phi^* a)|_E) =
-\frac{1}{x+y}(\bar\phi'^*( a'|_{Z'}) - \bar\phi^*( a|_Z)).
\end{equation*}
By the above lemma, we get
\begin{align*}
\bar\phi^*( a|_Z) &= \bar\phi^*(\sum_i \sum_{j \leq
\min\{k,r\}}( a.\hatt \tb_i^{k-j} H_{r-j})\tb_i^{k-j}h^j)\\
&= \sum_i\sum_{j \leq \min\{k,r\}}( a.\hatt \tb_i^{k-j} H_{r-j})\tb_i^{k-j}x^j.
\end{align*}
Similarly, we have
\begin{equation*}
\bar\phi'^*( a'|_{Z'}) = \sum_i\sum_{j \leq \min\{k,r\}}
( a'.\hatt \tb_i^{k-j} H'_{r-j})\tb_i^{k-j}y^j.
\end{equation*}
Since $\T$ preserves the Poincar\'e pairing,
\begin{equation*}
( a'.\hatt \tb_i^{k-j} H'_{r-j}) =
(\T a. \T((-1)^{r-(r-j)}\hatt \tb_i^{k-j} H_{r-j})) =
(-1)^j( a. \hatt \tb_i^{k-j} H_{r-j}).
\end{equation*}
Putting these together, we obtain
\begin{equation*}
\lambda = \sum_i\sum_{1\leq j \leq \min\{k,r\}}( a.\hatt
\tb_i^{k-j} H_{r-j})\tb_i^{k-j}\frac{x^j-(-y)^j}{x + y}.
\end{equation*}
\end{proof}

\begin{remark}
Notice that since the power in $x$ (and in $y$) is at most $r - 1$,
the class $\lambda$ clearly contains non-trivial $\bar\phi$ and
$\bar\phi'$ fiber directions. Thus this proposition in particular
gives rise to an alternative proof of equivalence of Chow motives
under ordinary flops (Theorem 0.2). Indeed this is precisely the
quantitative version of the original proof in \cite{LLW}.
\end{remark}

Now we may compare the triple products of classes in $X$ and $X'$.

\begin{theorem}[($=$ Theorem \ref{top-defect})] \label{top-defect-1}
Let $ a_i \in H^{2k_i}(X)$ for $i=1, 2, 3$ with $k_1 + k_2 + k_3
= \dim X = s + 2r + 1$. Then
\begin{align*}
(\T a_1.\T a_2.\T a_3) &= ( a_1.a_2.a_3)
+ (-1)^r \times\\
&\qquad\sum ( a_1.\hatt \tb_{i_1}^{k_1-j_1}
H_{r-j_1})( a_2.\hatt \tb_{i_2}^{k_2-j_2}  H_{r-j_2})
( a_3.\hatt \tb_{i_3}^{k_3-j_3}
H_{r-j_3}) \times \\
&\qquad\qquad\qquad (\tilde s_{j_1 + j_2 + j_3 - 2r - 1}
\tb_{i_1}^{k_1-j_1}\tb_{i_2}^{k_2-j_2} \tb_{i_3}^{k_3-j_3}),
\end{align*}
where the sum is over all possible $i_1, i_2, i_3$ and $j_1, j_2,
j_3$ subject to constraint: $1\leq j_p \leq \min\{r,k_p\}$ for
$p=1, 2, 3$ and $j_1+j_2+j_3\geq 2r + 1$. Here
\begin{equation*}
\tilde s_i := s_i(F + F'^*)
\end{equation*}
is the $i$th Segre class of $F + F'^*$.
\end{theorem}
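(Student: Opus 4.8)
The plan is to compute both sides using the resolution $\phi: Y \to X$, $\phi': Y \to X'$ of the flop and the formula for $\phi'^* a'$ from the previous proposition. First I would write
\[
(\T a_1.\T a_2.\T a_3)^{X'} = (a_1'.a_2'.a_3')^{X'} = \int_Y \phi'^*a_1' \cdot \phi'^*a_2' \cdot \phi'^*a_3',
\]
using that $\phi'$ is birational so $\phi'_*[Y] = [X']$ and the projection formula. Now substitute $\phi'^*a_p' = \phi^*a_p + j_*\lambda_p$ where, by the proposition, $\lambda_p = \sum_{i,j}(a_p.\check\tb_i^{k_p-j}H_{r-j})\,\tb_i^{k_p-j}\,\frac{x^j - (-y)^j}{x+y}$ with $x = \bar\phi^*h$, $y = \bar\phi'^*h'$. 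Expanding the triple product of sums gives $2^3 = 8$ terms.

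The next step is a term-by-term analysis of those eight contributions. The single term $\phi^*a_1\cdot\phi^*a_2\cdot\phi^*a_3$ integrates to $(a_1.a_2.a_3)^X$ since $\phi$ is birational. Any term with exactly one or exactly two factors of $j_*\lambda_p$ must vanish: such a term is supported on $E$, and after applying the self-intersection formula $j^*j_* = c_1(N_{E/Y}) = -(x+y)$ one is left reducing everything to an integral over $E$ of a class involving $\phi^*a_p|_E = \bar\phi^*(a_p|_Z)$ together with $\lambda_q$'s; pushing forward along $\bar\phi': E \to Z'$ (or $\bar\phi$) kills it because $\phi^*a_p$ pulled back from $X$ has no fiber component in the relevant direction — concretely $\bar\phi'_*\bar\phi^*h^i = 0$ for $i < r$, and the powers of $x$ appearing are all $\le r-1$. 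So only the all-$j_*\lambda$ term survives, and we must show
\[
\int_Y j_*\lambda_1 \cdot j_*\lambda_2 \cdot j_*\lambda_3 = \int_E (-(x+y))^2\,\lambda_1\lambda_2\lambda_3
\]
equals the asserted Segre-class expression. Here I would again use $j^*j_*\alpha = -(x+y)\alpha$ twice to convert the triple $Y$-intersection into an $E$-integral with two extra factors of $c_1(N_{E/Y})$.

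The heart of the computation is then the fiber integral over $E = P(F)\times_S P(F') \to S$. Plugging in the $\lambda_p$ and using $(x+y)^2 \cdot \frac{x^{j_p}-(-y)^{j_p}}{x+y}$, the $\frac{1}{x+y}$ denominators partially cancel, leaving a polynomial in $x,y$ of controlled bidegree; integrating over the $P^r\times P^r$ fiber (i.e.\ taking the coefficient of $x^r y^r$ after reducing modulo the two Chern polynomials $f_F(x)$, $f_{F'}(y)$) is where the Segre class $s_\bullet(F + F'^*)$ enters, via the generating-function identity $\sum_i s_i(F)\,t^i = 1/c_t(F)$ and the splitting/duality bookkeeping already set up in the $A := F'-F^*$, $A' := F - F'^*$ discussion. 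The sign $(-1)^r$ and the degree-shift index $j_1+j_2+j_3 - (2r+1)$ drop out of this fiber integral. I expect the main obstacle to be precisely this last residue computation: organizing the cancellation of the $(x+y)$ denominators against the numerators $x^{j_p}-(-y)^{j_p}$, and then correctly extracting the $x^r y^r$ coefficient modulo the relations to recognize the combination $s_m(F+F'^*)$ — in particular getting the sign conventions and the range constraint $j_1+j_2+j_3 \ge 2r+1$ (equivalently $m \ge 0$, which is forced since lower Segre indices give zero) exactly right. The vanishing of the mixed terms, by contrast, is routine dimension-counting once the self-intersection formula is applied.
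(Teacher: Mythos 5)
Your overall strategy (resolve on $Y$, substitute $\phi'^*\T a_p = \phi^*a_p + j_*\lambda_p$, use the excess-intersection formula $j^*j_* = -(x+y)$, and finish with a fiber integral over $E = P(F)\times_S P(F')$) is the right one, and your treatment of the all-pullback term and of the terms with exactly one factor $j_*\lambda_p$ is correct. But there is a fatal error in the middle step: the terms with exactly \emph{two} factors $j_*\lambda_p$ do \emph{not} vanish --- they are precisely the source of the entire correction term. Your vanishing argument (``the powers of $x$ appearing are all $\le r-1$'') is valid for a single $\lambda_p$, whose monomials $x^a y^b$ satisfy $a, b \le j_p - 1 \le r-1$; but in a product such as $j_*\lambda_1 . j_*\lambda_2 . \phi^*a_3$, which reduces to $\int_E -(x+y)\lambda_1\lambda_2\,\bar\phi^*(a_3|_Z)$, the monomials of $(x+y)\lambda_1\lambda_2$ have $y$-exponent up to $j_1+j_2-1 \le 2r-1$, which reaches $r$ as soon as $j_1+j_2 \ge r+1$, so $\bar\phi_*$ does not kill them. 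This is exactly where the constraint $j_1+j_2+j_3\ge 2r+1$ and the Segre classes $\bar\phi_* y^{r+m} = s_m(F')$ enter.

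Consequently your reduction to the single triple term $\int_E(x+y)^2\lambda_1\lambda_2\lambda_3$ computes the wrong quantity. In the fully symmetric $8$-term expansion, each of the three two-$\lambda$ terms equals the asserted correction $C$ (the final expression is symmetric in the three indices), so the three-$\lambda$ term must equal $-2C$ in order for the total to come out to $C$; keeping only the three-$\lambda$ term would therefore give $-2C$ instead of $C$. The paper avoids this bookkeeping by an asymmetric expansion: first note that $\phi'^*\T a_1.\phi'^*\T a_2.j_*\lambda_3 = 0$ (here the single-$\lambda$ vanishing \emph{is} valid, since the other two factors are pulled back from $Z'$ and $\lambda_3$ has $x$-powers $<r$), so that $\T a_1.\T a_2.\T a_3 = ((\phi^*a_1+j_*\lambda_1).(\phi^*a_2+j_*\lambda_2).\phi^*a_3)$; expanding this leaves exactly one surviving interaction term, $j_*\lambda_1.j_*\lambda_2.\phi^*a_3$, whose pushforward to $S$ produces $(-1)^r$ times the Segre-class expression. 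The final residue computation you outline (extracting $s_\bullet(F+F'^*)$ from $\bar\phi_* y^q$ and the Chern polynomial relations) is essentially the paper's, but it must be applied to the two-$\lambda$ term, not the three-$\lambda$ term.
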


\begin{proof}
First of all, $\phi'^*\T a_i = \phi^* a_i + j_*\lambda_i$
for some $\lambda_i \in H^{2(k_i - 1)}(E)$ which contains both fiber
directions of $\bar\phi$ and $\bar\phi'$. Hence
\begin{align*}
&(\T a_1.\T a_2.\T a_3) =
(\phi'^*\T a_1.\phi'^*\T a_2.(\phi^* a_3 +
j_*\lambda_3)) \\
&\quad =
(\phi'^*\T a_1.\phi'^*\T a_2.\phi^* a_3) = ((\phi^* a_1 + j_*\lambda_1) . (\phi^* a_2 +
j_*\lambda_2) .\phi^* a_3).
\end{align*}

Among the resulting terms, the first term is clearly equal to
$( a_1.a_2.a_3)$.

For those terms with two pull-backs like
$\phi^* a_1.\phi^* a_3$, the intersection values are zero
since the remaining part necessarily contains nontrivial $\bar\phi$
fiber direction.

The terms with $\phi^* a_3$ and two exceptional parts
contribute
\begin{align*}
&\phi^* a_3.j_*\tb_{i_1}^{k_1-j_1}\left(\frac{x^{j_1} -
(-y)^{j_1}}{x + y}\right).j_*\tb_{i_2}^{k_2-j_2}
\left(\frac{x^{j_2} - (-y)^{j_2}}{x + y}\right)\\
&= -\phi^* a_3.j_*
\big(\tb_{i_1}^{k_1-j_1}\tb_{i_2}^{k_2-j_2}(x^{j_1}
-(-y)^{j_1})(x^{j_2 - 1} +x^{j_2 - 2}(-y) + \cdots + (-y)^{j_2 -
1})\big)
\end{align*}
times $( a_1.\hatt \tb_{i_1}^{k_1-j_1} H_{r-j_1}) ( a_2.\hatt
\tb_{i_2}^{k_2-j_2} H_{r-j_2})$. The terms with non-trivial
contribution must contain $y^q$ with $q\geq r$ which implies
$j_1+j_2 - 1\geq r$, hence such terms are
\begin{equation*}
-(-y)^{j_1}(x^{j_2 - 1 -(r - j_1)}(-y)^{r - j_1} + x^{j_2 - 1 -(r
- j_1)-1}(-y)^{r - j_1+1} + \cdots + (-y)^{j_2-1})
\end{equation*}
and the contribution after taking $\phi_*$ is
\begin{equation*}
(-1)^{r+1}(h^{j_1+j_2-r-1} - h^{j_1+j_2-r-2}s'_1 + \cdots +
(-1)^{j_1+j_2-r-1}s'_{j_1+j_2-r-1})
\end{equation*}
where $s'_i := s_i(F')$ is the $i$th Segre class of $F'$. Here we
use the property of Segre classes to obtain $\phi_*y^q = s'_{q-r}$
for $q\geq r+1$.

In terms of bundle-theoretic formulation,
\begin{align*}
&h^{j_1+j_2-r-1} - h^{j_1+j_2-r-2}s'_1 + \cdots +
(-1)^{j_1+j_2-r-1} s'_{j_1+j_2-r-1}\\
&= \big((1-s_1'+s_2'+\cdots)(1+h+h^2+\cdots)\big)_{j_1+j_2-r-1}\\
&= \left(s(F'^*)\frac{1}{(1-h)}\right)_{j_1+j_2-r-1} =
\left(\frac{c(F)}{(1-h)}s(F)s(F'^*) \right)_{j_1+j_2-r-1}\\
&= \big(c(Q_F).s(F+F'^*)\big)_{j_1+j_2-r-1}\\
&= H_{j_1+j_2-r-1} + H_{j_1+j_2-r-2}\tilde s_1 + \cdots + \tilde
s_{j_1+j_2-r-1}.
\end{align*}

With respect to the basis $\{\hatt \tb^k_i \}$, $\tilde s_p
\tb_{i_1}^{k_1-j_1} \tb_{i_2}^{k_2-j_2}$ is of the form
\begin{equation*}
\sum_{i_3} \big(\tilde s_p\tb_{i_1}^{k_1-j_1}\tb_{i_2}^{k_2-j_2}
\tb_{i_3}^{k_3-(2r+1+p-j_1-j_2)}\big) \hatt
\tb_{i_3}^{k_3-(2r+1+p-j_1-j_2)} .
\end{equation*}

We define the new index $j_3 = 2r+1+p-j_1-j_2$ and thus
$j_1+j_2+j_3\geq 2r+1$, also $p= j_1+j_2+j_3-2r-1$.\par By summing
all together, we get the result.
\end{proof}

There is a particularly simple case where no $H_i$ or Segre classes
$\tilde s_i$ are needed in the defect formula, namely the $P^1$
flops.

\begin{corollary}
For $P^1$ flops over any smooth base $S$ of dimension $s$, let
$ a_i \in H^{2k_i}(X)$ for $i=1, 2, 3$ with $k_1 + k_2 + k_3 =
\dim X = s + 3$. Then
\begin{equation*}
(\T a_1.\T a_2.\T a_3) = ( a_1.a_2.a_3)
- \sum ( a_1.\hatt \tb_1 )( a_2.\hatt \tb_2 ) ( a_3.\hatt \tb_3 )
(\tb_1 \tb_2 \tb_3)
\end{equation*}
with $\tb_i$ running over all basis classes in $H^{2(k_i - 1)}(S)$.
\end{corollary}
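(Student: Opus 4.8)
The plan is to obtain the corollary as the specialization of Theorem~\ref{top-defect-1} to the case $r = 1$, so that essentially no input is needed beyond a careful reading of the index constraints. I would begin by noting that for a $P^1$ flop we have $\dim X = s + 2r + 1 = s + 3$, so the hypothesis $k_1 + k_2 + k_3 = \dim X$ matches the one in Theorem~\ref{top-defect-1} verbatim.

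The main step is to simplify the defect sum. With $r = 1$ the constraint $1 \le j_p \le \min\{r, k_p\}$ forces $j_p = 1$ for every $p$ with $k_p \ge 1$, and then $j_1 + j_2 + j_3 = 3 = 2r + 1$ automatically satisfies the remaining inequality $j_1 + j_2 + j_3 \ge 2r + 1$. Hence only the single index choice $j_1 = j_2 = j_3 = 1$ survives, with $i_1, i_2, i_3$ ranging over bases of $H^{2(k_1-1)}(S)$, $H^{2(k_2-1)}(S)$, $H^{2(k_3-1)}(S)$ respectively. I would then substitute the values of the surviving ingredients: since $r - j_p = 0$ we have $H_{r-j_p} = H_0 = 1$, so each insertion $(a_p.\hatt\tb_{i_p}^{k_p-j_p}H_{r-j_p})$ reduces to $(a_p.\hatt\tb_{i_p}^{k_p-1})$; the Segre factor is $\tilde s_{j_1+j_2+j_3-2r-1} = \tilde s_0 = 1$, killing all Chern-class contributions; and the prefactor $(-1)^r$ becomes $-1$. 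Plugging these back into the formula of Theorem~\ref{top-defect-1} yields precisely the asserted identity, with $\tb_i$ running over a basis of $H^{2(k_i-1)}(S)$.

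Finally I would dispose of the degenerate cases in which some $k_p = 0$; these lie inside the corollary's hypotheses but outside the stated range of the $\tb_i$. If, say, $a_1 \in H^0(X)$ equals $c$ times the fundamental class, the right-hand sum is empty (no admissible $j_1$, equivalently $H^{2(k_1-1)}(S) = 0$), while on the left $(\T a_1.\T a_2.\T a_3) = c\,(\T a_2.\T a_3) = c\,(a_2.a_3) = (a_1.a_2.a_3)$ because $\T$ preserves the Poincar\'e pairing and fixes $1$; so the identity holds trivially there.

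I do not expect a genuine obstacle: the corollary is a bookkeeping consequence of Theorem~\ref{top-defect-1} once the index constraints for $r = 1$ are made explicit and the trivial $k_p = 0$ cases are checked. If anything, the only point requiring a line of care is confirming that $j_1 + j_2 + j_3 \ge 2r+1$ is not an extra restriction here (it is forced to be an equality), so that no term of the general defect formula is silently dropped.
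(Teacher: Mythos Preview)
Your proposal is correct and is exactly the approach the paper intends: the corollary is stated immediately after Theorem~\ref{top-defect-1} as the specialization $r=1$, with no separate proof given. Your bookkeeping of the index constraints $j_p=1$, $H_0=1$, $\tilde s_0=1$, $(-1)^r=-1$, together with the check of the degenerate $k_p=0$ case, fills in precisely the details the paper leaves implicit.
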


There is a trivial but useful observation on when the product is preserved:

\begin{corollary}
For a $P^r$ flop $f: X \dasharrow X'$, $a_1 \in H^{2k_1}(X)$, $a_2 \in H^{2k_2}(X)$ with $k_1 + k_2 \le r$, then $\T (a_1.a_2) = \T a_1.\T a_2$.
\end{corollary}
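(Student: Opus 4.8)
The plan is to reduce the assertion to the vanishing of the topological defect computed in Theorem~\ref{top-defect}. By Theorem~\ref{t:1.1} the correspondence $\T\colon H(X)\to H(X')$ is an isomorphism that preserves the Poincar\'e pairing; since that pairing is non-degenerate on $X'$, to prove the equality $\T(a_1.a_2)=\T a_1.\T a_2$ of classes in $H^{2(k_1+k_2)}(X')$ it suffices to check that
\begin{equation*}
\big(\T(a_1.a_2).\T a_3\big)^{X'}=\big(\T a_1.\T a_2.\T a_3\big)^{X'}
\end{equation*}
for every $a_3\in H^{2k_3}(X)$, and here we may take $k_3=\dim X-k_1-k_2$: for other degrees both sides vanish for dimension reasons, and $k_3\ge 0$ since $k_1+k_2\le r<\dim X$. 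By the pairing-invariance the left-hand side equals $(a_1.a_2.a_3)^X$, so what must be shown is precisely that the defect $\big(\T a_1.\T a_2.\T a_3\big)^{X'}-(a_1.a_2.a_3)^X$ vanishes under the hypothesis $k_1+k_2\le r$.

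Next I would feed this into Theorem~\ref{top-defect} (equivalently Theorem~\ref{top-defect-1}), whose right-hand side is a sum over indices $i_1,i_2,i_3$ and $j_1,j_2,j_3$ constrained by $1\le j_p\le\min\{r,k_p\}$ for $p=1,2,3$ together with $j_1+j_2+j_3\ge 2r+1$. But $j_1\le k_1$, $j_2\le k_2$ and $j_3\le r$, whence
\begin{equation*}
j_1+j_2+j_3\le k_1+k_2+r\le r+r=2r<2r+1 ,
\end{equation*}
a contradiction. So the index set is empty, the sum is identically zero, the defect vanishes, and combining with the previous paragraph yields $\T(a_1.a_2)=\T a_1.\T a_2$.

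Since the statement falls out of the already-established defect formula by pure bookkeeping, there is essentially no serious obstacle; the only point needing a word of care is the passage to the range $\sum\deg a_i=\dim X$ in which Theorem~\ref{top-defect} is stated, which is handled by the degree observation above. One could instead argue geometrically, using the pullback identity $\phi'^*\T a_i=\phi^*a_i+j_*\lambda_i$ with $\lambda_i$ supported on $E$ and of $\bar\phi$- and $\bar\phi'$-fiber degree at most $r-1$, and comparing $\phi'^*(\T a_1.\T a_2)$ with $\phi'^*\T(a_1.a_2)$ directly; but this forces one to keep track of the cross term $j_*\lambda_1.j_*\lambda_2=-j_*\big((x+y)\lambda_1\lambda_2\big)$ and to match it against the $\lambda$-term attached to $a_1.a_2$, which is more cumbersome than the pairing argument, so I would not pursue that route.
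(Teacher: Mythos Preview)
Your proof is correct and follows essentially the same approach as the paper: the paper's proof is the one-line remark that the corollary follows from Theorem~\ref{top-defect-1} because all correction terms vanish for any $a_3$ by a dimension count, and you have simply written out that dimension count explicitly (namely $j_1+j_2+j_3\le k_1+k_2+r\le 2r<2r+1$). The reduction via Poincar\'e duality to the three-point defect is implicit in the paper's formulation and you have made it explicit; there is no substantive difference.
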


This follows from Theorem \ref{top-defect-1} since all the correction terms vanish for any $a_3$.
In fact it is a consequence of dimension count.

\section{Quantum corrections attached to the extremal ray} \label{q-correction}

\subsection{The set-up with nontrivial base}\label{set-up}

Let $a_i \in H^{2k_i}(X)$, $i = 1, \ldots, n$, with
\begin{equation*}
\sum_{i = 1}^n k_i = 2r + 1 + s + (n - 3).
\end{equation*}

Since
\begin{equation*}
a_i|_Z = \sum_{s_i} \sum_{j_i \leq \min\{k_i,
r\}}(a_i.\hatt \tb_{s_i}^{k_i - j_i} H_{r - j_i}) \tb_{s_i}^{k_i -
j_i}h^{j_i},
\end{equation*}
we compute
\begin{align*}
&\langle a_1, \ldots, a_n \rangle_{0, n, d\ell}^X \\
&= \sum_{\vec s, \vec j} \int_{\Mbar_{0, n}(Z, d\ell)} \prod_{i = 1}^n
\Big((a_i.\hatt \tb_{s_i}^{k_i - j_i} H_{r - j_i})\; e_i^*
(\bar\psi^* \tb_{s_i}^{k_i - j_i}.
h^{j_i})\Big).e(R^1 ft_* e_{n + 1}^* N)\\
&= \sum_{\vec s, \vec j} \prod_{i = 1}^n (a_i.\hatt
\tb_{s_i}^{k_i - j_i} H_{r - j_i}) \left[\prod_{i = 1}^n \tb_{s_i}^{k_i
- j_i}. \Psi_{n*} \Big(\prod_{i = 1}^n e_i^* h^{j_i}. e(R^1 ft_*
e_{n + 1}^* N) \Big)\right]^S,
\end{align*}
with the sum over all $\vec s= (s_1. \ldots, s_n)$ and admissible
$\vec j = (j_1, \ldots, j_n)$. By the fundamental class axiom, we
must have $j_i \ge 1$ for all $i$.

Here we make use of
\begin{equation*}
[\Mbar_{0, n}(X, d\ell)]^{virt} = [\Mbar_{0, n}(Z, d\ell)] \cap e(R^1 ft_*
e_{n + 1}^* N)
\end{equation*}
and the fiber bundle diagram over $S$
\begin{equation*}
\xymatrix{& \Mbar_{0, n + 1}(Z, d\ell) \ar[rd]^{e_{n + 1}} \ar[d]_{ft}
& N = N_{Z/X} \ar[d]\\
\Mbar_{0, n}(P^r, d\ell)\ar[r] &  \Mbar_{0, n}(Z,
d\ell) \ar[r]^{e_i} \ar[d]_{\Psi_n} & Z \ar[ld]^{\bar\psi}\\
& S &}
\end{equation*}
as well as the fact that classes in $S$ are constants among bundle
morphisms (by the projection formula applying to $\Psi_n = \bar\psi
\circ e_i$ for each $i$).

We must have $\sum (k_i - j_i) \le s$ to get nontrivial invariants.
That is,
\begin{equation*}
\sum_{i = 1}^n j_i \ge 2r + 1 + n - 3.
\end{equation*}

If the equality holds, then $\prod_{i = 1}^n \tb_{s_i}^{k_i - j_i}$
is a zero dimensional cycle in $S$ and the invariant readily
reduces to the corresponding one on any fiber, namely the simple
case, which is completely determined in \cite{LLW}:
\begin{equation*}
(\tb_{s_1}^{k_1 - j_1} \cdots\, \tb_{s_n}^{k_n - j_n})^S \langle
h^{j_1}, \ldots, h^{j_n}\rangle_{0, n, d\ell}^{\rm simple} =
(\prod \tb_{s_i}^{k_1 - j_1})^S N_{\vec j}\; d^{n - 3} (-1)^{(d-1)(n+1)}.
\end{equation*}

On the contrary, if the strict inequality holds, by the dimension
counting in the simple case, the restriction of the fiber integral
$\Psi_{n*}(\cdot)$ to points in $S$ vanishes. In fact the fiber
integral is represented by a cycle $S_{\vec j} \subset S$ with
codimension
\[
 \m := \sum j_i - (2r + 1 + n - 3).
\]
The structure of
$S_{\vec j}$ necessarily depends on the bundles $F$ and $F'$.

One would expect the end formula for $\Psi_{n*}(\cdot)$ to be
\begin{equation*}
s_{\m}(F + F'^*)\,N_{\vec j}\;d^{n - 3}
\end{equation*}
with $N_{\vec j} = 1$ for $n \le 3$ so that the difference of the
corresponding generating functions on $X$ and $X'$ cancels out with
the classical defect on cup product. Unfortunately the actual
behavior of these Gromov--Witten invariants with base dimension $s
> 0$ is more delicate than this.

Notice that the new phenomenon does not occur for $n = 2$. In that
case, $k_1 + k_2 = 2r + s$, $j_1 = j_2 = r$ and we may assume that
$\tb_{s_2}$ is running through the dual basis of $\tb_{s_1}$. Since
then the nontrivial terms only appear when $\tb_{s_1}$ and $\tb_{s_2}$
are dual to each other, we get
\begin{align*}
\langle a_1, a_2 \rangle_{0,2,d\ell}^X &= \sum_{s}
(a_1.\tb_s)(a_2.\hatt \tb_s ) \langle h^r, h^r \rangle_d^{\rm
simple} \\
&= (-1)^{(d - 1)(r + 1)}\frac{1}{d} \sum_{s}
(a_1.\tb_s)(a_2.\hatt \tb_s ).
\end{align*}

It is also clear that the new phenomenon does not occur for $P^1$
flops over an arbitrary smooth base $S$. Thus before dealing with
the general cases, we will work out the first (simplest) new case
to demonstrate the general picture that will occur.

\subsection{Twisted relative invariants for $\m = 1$}
Consider $P^r$ flops with $n = 3$ and $j_1 + j_2 + j_3 = (2r + 1) +
1 = 2r + 2$, namely with one more degree (i.e. $\m = 1$) than the
old case. We start with $(j_1, j_2, j_3) = (2, r, r)$. Since
classes from $S$ can be merged into any marked point, the invariant
to be taken care is
\begin{equation*}
\langle h^2, h^r, \bart h^r \rangle_d^X
\end{equation*}
for some $\bart \in H^{2(s - 1)}(S)$. Equivalently we define the fiber
integral
\begin{equation*}
\Big\langle \prod_{i = 1}^n h^{j_i} \Big\rangle_d^{/S} :=
\Psi_{n*} \Big(\prod_{i = 1}^n e_i^* h^{j_i}. e(R^1 ft_*
e_{n + 1}^* N)  \Big) \in A(S)
\end{equation*}
to be a {\em $\bar\psi$-relative invariant over $S$} and we are
computing
\begin{equation*}
\langle h^2, h^r, \bart h^r \rangle_d^X = (\langle h^{2}, h^{r},
h^{r} \rangle_d^{/S}.\bart)^S
\end{equation*}
now. Notice that for $r = 2$, $6 \ge j_1 + j_2 + j_3
> 5$ hence $(2, 2, 2)$ is precisely the only new case to compute.

The basic idea is to use the \emph{divisor relation} \cite{LP} (for $n \ge 3$ points invariants)
\begin{equation} \label{div-rel-1}
e_i^*h = e_j^*h + \sum_{d' + d'' = d}(d'' [D_{ik, d'|j,
d''}]^{virt} - d' [D_{i, d'|jk, d''}]^{virt})
\end{equation}
to move various $h$'s into the same marked point. This type of
process is also referred as \emph{divisorial reconstruction} in this paper.
Once the power exceeds $r$, the Chern polynomial relation reduces
$h^{r + 1}$ into lower degree ones coupled with (Chern) classes
from the base $S$. This will eventually reduce the {\em new
invariants} to {\em old cases}. While this procedure is well known
as the {\em reconstruction principle} in Gromov--Witten theory, the
moral here is to show that this reconstruction transforms perfectly
under flops.

Let $\Delta(X) = \sum_{\mu} T_{\mu} \otimes T^{\mu}$ be a diagonal splitting of
$\Delta(X) \subset X \times X$.
That is, $\{T_{\mu}\}$ is a cohomology basis of $H(X)$ with dual basis
$\{T^{\mu}\}$.
Apply the divisor relation \eqref{div-rel-1}
we get
\begin{align*}
\langle h^2, h^r, \bart h^r \rangle_d &= \langle h, h^{r + 1}, \bart h^r
\rangle_d\\
&\quad + \sum_{d' + d'' = d} \sum_{\mu}d'' \langle h, \bart h^r,
T_{\mu}\rangle_{d'} \langle T^{\mu}, h^r \rangle_{d''} - d' \langle h,
T_{\mu}\rangle_{d'} \langle T^{\mu}, h^r, \bart h^r \rangle_{d''}.
\end{align*}
The last terms vanish since there are no (non-trivial) two point
invariants of the form $\langle h, T_{\mu}\rangle_{d'}$.

Since $h^{r + 1} = -c_1h^r -c_2h^{r - 1} - \cdots - c_{r + 1}$, the
first term clearly equals
\begin{equation*}
-(c_1.\bart )^S \langle h, h^r, h^r \rangle_d^{\rm simple} = -(-1)^{(d
- 1)(r + 1)}(c_1.\bart )^S.
\end{equation*}

For the second terms, notice that the only degree zero invariant is
given by 3-point classical cup product. Hence if $d' = 0$ then we
may select $\{T^{\mu}\}$ in the way that $h.\bart h^r$ appears as one of the
basis elements, say $T^0 = \bart h^{r + 1}$ (this is not part of the
canonical basis). Thus $d'' = d$ and the term equals
\begin{align*}
& d\langle h, \bart h^r, T_0 \rangle_0 \langle \bart h^{r + 1}, h^r\rangle_d
\\
&= -d(c_1.\bart )^S \langle h^r, h^r \rangle_d^{\rm simple} = -(-1)^{(d
- 1)(r + 1)}(c_1.\bart )^S.
\end{align*}

It remains to consider $1 \le d'' \le d - 1$. In this case we may
assume that $T_0 = \hatt \bart   h^r$ since no lower power in $h$ is
allowed. To compute $T^0$ explicitly, since we are considering
extremal rays, we may work on the projective local model $X_{loc} =
P(N_{Z/X} \oplus \mathscr{O})$ of $X$ along $Z$.

By applying Lemma \ref{dual-basis} to $H(X_{loc})$, we get

\begin{lemma} \label{defect-1}
Let $\{z_i\}$ be a basis of $H(Z)$ and $\xi =
c_1(\mathscr{O}_{P(N \oplus \mathscr{O})}(1))$ be the class of the
infinity divisor $E$. The dual basis for $\{z_i\xi^{r + 1 - j}\}_{j
\le r + 1}$ is given by $\{\hatt z_i \Theta_j\}_{j \le r + 1}$ where
\begin{equation*}
\Theta_j := c_j(Q_N) = \xi^j + c_1(N)\xi^{j - 1} + \cdots + c_j(N).
\end{equation*}
In particular, $\Theta_j|_Z = c_j(N)$. Moreover, since $N =
\bar\psi^* F' \otimes \mathscr{O}(-1)$, we have
\begin{equation*}
c_{r + 1}(N) = (-1)^{r + 1}(h^{r + 1} - c_1'h^r + \cdots + (-1)^{r
+ 1}c_{r + 1}').
\end{equation*}
\end{lemma}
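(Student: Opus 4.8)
The plan is to recognize that $X_{loc} = P(N \oplus \mathscr{O}) \to Z$ is itself a projective bundle of rank $r+1$ over $Z$, so the Leray--Hirsch presentation and the dual-basis computation of Lemma \ref{dual-basis} apply verbatim with $Z$ playing the role of the base $S$, with $\mathscr{O}_{P(N\oplus\mathscr{O})}(1)$ the relative hyperplane class $\xi$, and with the rank-$(r+1)$ bundle $N\oplus\mathscr{O}$ in place of the bundle $F$. Under this dictionary the relevant universal quotient bundle is $Q$ fitting in $0 \to \mathscr{O}_{P(N\oplus\mathscr{O})}(-1) \to p^*(N\oplus\mathscr{O}) \to Q \to 0$, whose total Chern class is $c(p^*(N\oplus\mathscr{O}))/(1+\xi\cdot(-1))^{-1}$—but since $N\oplus\mathscr{O}$ has the same Chern classes as $N$, we get $c_j(Q) = \xi^j + c_1(N)\xi^{j-1} + \cdots + c_j(N)$, which is exactly the claimed $\Theta_j$. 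Here I write $p: X_{loc} \to Z$ for the bundle projection. Thus the first assertion—that $\{\hatt z_i\,\Theta_j\}_{j\le r+1}$ is dual to $\{z_i\,\xi^{r+1-j}\}_{j\le r+1}$ under the Poincar\'e pairing on $X_{loc}$—is an immediate transcription of Lemma \ref{dual-basis}, replacing the pair $(r, h)$ there by $(r+1, \xi)$ and $S$ by $Z$, and replacing $H_{r-j}$ by $\Theta_{(r+1)-j}$.

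Next I would verify the restriction statement $\Theta_j|_Z = c_j(N)$. Restricting to the zero-section $Z \hookrightarrow X_{loc}$ (i.e. the copy of $Z$ cut out by the summand $\mathscr{O}$), the tautological subbundle $\mathscr{O}_{P(N\oplus\mathscr{O})}(-1)$ restricts to the trivial summand $\mathscr{O}$, hence $\xi|_Z = c_1(\mathscr{O}_{P(N\oplus\mathscr{O})}(1))|_Z = 0$; alternatively one may restrict to the infinity divisor $E \cong P(N)$ where $\xi$ becomes $c_1(\mathscr{O}_{P(N)}(1))$ and check compatibility. Plugging $\xi|_Z = 0$ into $\Theta_j = \xi^j + c_1(N)\xi^{j-1} + \cdots + c_j(N)$ kills every term except $c_j(N)$, giving $\Theta_j|_Z = c_j(N)$.

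Finally, for the top Chern class $c_{r+1}(N)$: I would use $N = N_{Z/X} \cong \bar\psi^*F' \otimes \mathscr{O}_{P(F)}(-1)$ established in Section~\ref{s:1.1}, so $c(N) = \prod_{i}(1 + \bar\psi^*x_i' - h)$ where $x_i'$ are the Chern roots of $F'$; hence $c_{r+1}(N) = \prod_{i=0}^r (\bar\psi^*x_i' - h) = (-1)^{r+1}\prod_{i=0}^r(h - \bar\psi^*x_i')$, and expanding the product gives $(-1)^{r+1}\bigl(h^{r+1} - c_1'h^r + c_2' h^{r-1} - \cdots + (-1)^{r+1}c_{r+1}'\bigr)$ as claimed, recalling the abbreviation $c_i' = \bar\psi^*c_i(F')$. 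I do not expect any genuine obstacle here: the entire lemma is a bookkeeping consequence of Leray--Hirsch for the double projective bundle $X_{loc} \to Z \to S$ combined with the explicit formula for $N_{Z/X}$. The only point requiring a little care is making sure the dual-basis claim is applied over the correct base ($Z$, not $S$) and with the correct rank shift ($r+1$ rather than $r$), and that the extra trivial summand $\mathscr{O}$ in $N \oplus \mathscr{O}$ genuinely does not affect any Chern class that appears—which it does not, since $c(N\oplus\mathscr{O}) = c(N)$.
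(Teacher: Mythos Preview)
Your proposal is correct and follows exactly the route the paper indicates: the paper simply says ``By applying Lemma \ref{dual-basis} to $H(X_{loc})$'' and states the lemma without further proof, and you have carried out precisely that application with the correct dictionary $(S,F,r,h,H_k)\leadsto (Z,\,N\oplus\mathscr{O},\,r+1,\,\xi,\,\Theta_k)$, together with the straightforward Chern-root computation for $c_{r+1}(N)$.
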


Now if $z_0 = \hatt \bart h^r$ and
$T_0 = z_0 \xi^0 = \hatt \bart h^r$, then
$T^0 = \bart \Theta_{r + 1}$ and the invariants become
\begin{align*}
&d" \langle h, \bart h^r, \hatt \bart h^r  \rangle_{d'} \langle \bart c_{r +
1}(N), h^r \rangle_{d"} \\
&= -(-1)^{(d' - 1)(r + 1)}(-1)^{r + 1} d"(\bart .(c_1 + c_1'))^S
\langle h^r, h^r
\rangle_{d"}^{\rm simple} \\
&= -(-1)^{(d' - 1 + d" - 1 + 1)(r + 1)} ((c_1 + c_1').\bart )^S \\
&= -(-1)^{(d - 1)(r + 1)} ((c_1 + c_1').\bart )^S.
\end{align*}
Summing together, we get
\begin{align*}
\langle h^2, h^r, \bart h^r \rangle_d = (-1)^{(d - 1)(r + 1)}
\Big(((-c_1 + c_1').\bart )^S -d((c_1 + c_1').\bart )^S\Big).
\end{align*}

By exactly the same procedure, as long as $j_2 < r$ or $j_3 < r$,
the boundary terms in the divisor relation necessarily vanish by
the exact knowledge on 2-point invariants, hence
\begin{equation*}
\langle h^{j_1}, h^{j_2}, \bart h^{j_3}\rangle_d = \langle h^{j_1 - 1},
h^{j_2 + 1}, \bart h^{j_3}\rangle_d.
\end{equation*}
In particular, any invariant with $j_1 + j_2 + j_3 = 2r + 2$ may be
inductively transformed into $\langle h^2, h^r, \bart h^r\rangle_d$. Hence
we have shown

\begin{proposition}[$n = 3$, $\m = 1$] \label{mu=1}
For $\sum_{i = 1}^3 j_i = 2r + 2$ and $\bart  \in H^{2(s - 1)}(S)$,
\begin{equation*}
\langle h^{j_1}, h^{j_2}, \bart h^{j_3}\rangle_d = (-1)^{(d - 1)(r +
1)}\Big((\tilde s_1.\bart )^S -d (c_1(F + F').\bart )^S\Big).
\end{equation*}
\end{proposition}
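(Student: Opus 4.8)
The plan is to prove Proposition~\ref{mu=1} by the same divisorial reconstruction strategy already carried out for the special case $(j_1,j_2,j_3)=(2,r,r)$, and then to assemble the pieces into the stated symmetric formula. First I would observe that the case analysis in the text gives two computations: the ``reduction step'' showing $\langle h^{j_1},h^{j_2},\bart h^{j_3}\rangle_d=\langle h^{j_1-1},h^{j_2+1},\bart h^{j_3}\rangle_d$ whenever $j_2<r$ or $j_3<r$ (the boundary terms in the divisor relation \eqref{div-rel-1} vanish because every $2$-point invariant $\langle h^a,T_\mu\rangle_{d'}$ with $a<r$ is zero), and the base computation of $\langle h^2,h^r,\bart h^r\rangle_d$. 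By repeatedly applying the reduction step, any admissible $\vec j$ with $\sum j_i=2r+2$ and all $1\le j_i\le r$ (for $r\ge 2$; the case $r=1$ forces $(j_1,j_2,j_3)$ with sum $4$, handled directly) can be transported to $(2,r,r)$ up to reordering of the insertions, which is legitimate because GW invariants are symmetric in the marked points. Hence it suffices to record the value at $(2,r,r)$.

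Next I would carry out the $(2,r,r)$ computation exactly as sketched: apply \eqref{div-rel-1} to rewrite $\langle h^2,h^r,\bart h^r\rangle_d$ as $\langle h,h^{r+1},\bart h^r\rangle_d$ plus boundary terms, use the Chern polynomial relation $h^{r+1}=-c_1h^r-\dots-c_{r+1}$ to reduce the first term to $-(-1)^{(d-1)(r+1)}(c_1.\bart)^S$ via the known simple-flop value $\langle h,h^r,h^r\rangle_d^{\rm simple}=(-1)^{(d-1)(r+1)}$, and then split the boundary sum $\sum_{d'+d''=d}$ into the three regimes $d'=0$, $1\le d''\le d-1$, and $d''=0$. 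The $d'=0$ piece is a classical triple product paired against a $d$-degree $2$-point invariant and contributes $-(-1)^{(d-1)(r+1)}(c_1.\bart)^S$; the $d''=0$ piece vanishes since there is no nontrivial $\langle h,T_\mu\rangle_{d'}$; and the middle piece, using Lemma~\ref{defect-1} to identify the relevant dual class $T^0=\bart\Theta_{r+1}$ together with $c_{r+1}(N)=(-1)^{r+1}(h^{r+1}-c_1'h^r+\dots)$ and $\Theta_{r+1}|_Z=c_{r+1}(N)$, contributes, summed over the $d-1$ values of $d''$, a total of $-(-1)^{(d-1)(r+1)}\cdot d\cdot((c_1+c_1').\bart)^S$ after telescoping the sign $(-1)^{(d'-1+d''-1+1)(r+1)}=(-1)^{(d-1)(r+1)}$. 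Adding gives $\langle h^2,h^r,\bart h^r\rangle_d=(-1)^{(d-1)(r+1)}\big((-c_1+c_1').\bart)^S-d((c_1+c_1').\bart)^S\big)$.

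Finally I would match this with the claimed closed form. Since $\tilde s_i=s_i(F+F'^*)$, in degree one $\tilde s_1=-c_1(F+F'^*)=-c_1(F)-c_1(F'^*)=-c_1+c_1'$, using $c_1(F'^*)=-c_1(F')=-c_1'$; and $c_1(F+F')=c_1+c_1'$. Substituting, the value at $(2,r,r)$ is precisely $(-1)^{(d-1)(r+1)}\big((\tilde s_1.\bart)^S-d(c_1(F+F').\bart)^S\big)$, and by the transport argument of the first paragraph this is the value for every admissible $\vec j$, which is the assertion.

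The step I expect to be the main obstacle is the bookkeeping in the middle regime $1\le d''\le d-1$: one must justify that $T_0$ can be taken to be $\check\bart\, h^r$ with no lower power of $h$ appearing (so that the dual $T^0$ is unambiguously $\bart\Theta_{r+1}$ on the local model), verify that passing to $X_{loc}=P(N_{Z/X}\oplus\mathscr{O})$ does not change the extremal-ray invariant, and keep the four separate sign exponents $(d'-1)(r+1)$, $(r+1)$ from $c_{r+1}(N)$, $(d''-1)(r+1)$ from the simple $2$-point invariant, collapsing correctly to a single $(-1)^{(d-1)(r+1)}$ independent of the splitting $d'+d''=d$. Once that sign is pinned down the $d''$-sum is a trivial count giving the factor $d$, and the rest is the elementary Chern-class identities above.
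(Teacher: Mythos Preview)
Your proposal is correct and follows essentially the same route as the paper: first reduce any admissible $(j_1,j_2,j_3)$ to $(2,r,r)$ via the divisor relation \eqref{div-rel-1} (boundary terms vanish by the $2$-point knowledge), then compute $\langle h^2,h^r,\bart h^r\rangle_d$ directly, and finally rewrite the answer using $\tilde s_1=-c_1+c_1'$ and $c_1(F+F')=c_1+c_1'$.

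One arithmetic slip to fix: the middle regime $1\le d''\le d-1$ has $d-1$ terms, each contributing $-(-1)^{(d-1)(r+1)}((c_1+c_1').\bart)^S$, so the sum is $-(d-1)$ times that quantity, not $-d$ times. With the correct factor $(d-1)$, adding the two $-(c_1.\bart)^S$ contributions (from the Chern-polynomial term and from the $d'=0$ boundary) gives
\[
-2c_1-(d-1)(c_1+c_1') \;=\; (-c_1+c_1')-d(c_1+c_1'),
\]
which is the stated total; your written factor $d$ would not reproduce this. Everything else, including the sign collapse $(-1)^{(d'-1)(r+1)}(-1)^{r+1}(-1)^{(d''-1)(r+1)}=(-1)^{(d-1)(r+1)}$ and the identification $T^0=\bart\Theta_{r+1}$ on the local model, matches the paper.
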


As in \cite{LLW}, this implies that the 3-point {\em extremal
quantum corrections} for $X$ and $X'$ remedy the defect of
classical cup product for the cases $\m = 1$.

To see this, it is convenient to consider the basic rational
function
\begin{equation} \label{e:brf}
\f(q) := \frac{q}{1 - (-1)^{r + 1}q} = \sum_{d \ge 1} (-1)^{(d
-1)(r + 1)} q^{d},
\end{equation}
which is the 3-point extremal correction for the case $\m = 0$. It
is clear that
\begin{equation*}
\f(q) + \f(q^{-1}) = (-1)^r.
\end{equation*}

Since $\T(\bart h^j) = (-1)^j \bart h'^j$ for $j \le r$, the geometric series
on $X$
\begin{equation*}
\sum_{d \ge 1} (-1)^{(d -1)(r + 1)} (\tilde s_1.\bart )^S\, q^{d\ell} =
(\tilde s_1.\bart )^S \f(q^\ell)
\end{equation*}
together with its counterpart on $X'$ {\em exactly} correct the
classical term via
\begin{align*}
&(\tilde s_1.\bart )^S \f(q^\ell) - (-1)^{j_1 + j_2 + j_3}
(\tilde s_1'.\bart )^S \f(q^{\ell'}) \\
&= (\tilde s_1.\bart )^S (\f(q^\ell) + \f(q^{-\ell})) = (-1)^r (\tilde
s_1.\bart )^S.
\end{align*}

The new feature for $\m = 1$ is that we also have contributions
involving the differential operator $\delta_h =
q^{\ell}\,d/dq^{\ell}$, namely
\begin{equation*}
-(c_1(F + F').\bart )^S\sum_{d \ge 1} (-1)^{(d -1)(r + 1)} d q^{d\ell}
= -(c_1(F + F').\bart )^S\, \delta_h \f(q^\ell).
\end{equation*}
This higher order series {\em does not} occur as corrections to the
classical defect, though it is still derived from the $\m = 0$
information together with the classical (bundle-theoretic) data. Of
course it is invariant under $P^r$ flops in terms of analytic
continuation.

\begin{remark}\label{caution}
It is helpful to comment on $\bart h^j$ and $\T (\bart h^j)$ to avoid
confusion. Since the Gromov--Witten theory of extremal curve classes
localizes to $Z$, $\bart h^j$ is regarded as $a|_Z$ for some
$a \in H(X)$. If $j \le r$, the familiar formula
$\T a|_{Z'} = (-1)^j\bart h'^{j}$ follows from Proposition \ref{H}, Lemma
\ref{dual-basis} and the invariance of Poincar\'e pairing. However
this formula is not true for $j > r$. Instead, by the Segre
relation $\bar\psi_* h^{r + \m} = s_\m$, we find that $h^{r + \m
} = s_\m h^r + \mbox{(lower order terms)}$. This observation will
be useful later.
\end{remark}

\subsection{Twisted relative invariants for general $\m$}
We will show that when $\sum_{i = 1}^3 j_i = 2r + 1 + \m$ ($\m
\le r - 1$), there is a degree $\m$ cohomology valued polynomial
$W^{F, F'}_{\m}(d) = \sum_{i = 0}^\m w_{\m, i}(F, F')\, d^i$
with coefficients $w_{\m, i}(F,F') \in H^{2\m}(S,\mathbb{Q})$ such that for any class $\bart  \in H^{2(r - \m)}(S)$,
\begin{align*}
\langle h^{j_1}, h^{j_2}, \bart h^{j_3}\rangle_d &= (-1)^{(d - 1)(r +
1)} (W^{F, F'}_{\m}.\bart )^S(d)\\
&:= (-1)^{(d - 1)(r +
1)} \sum_{i = 0}^\m (w_{\m, i}(F, F').\bart )^S\, d^i.
\end{align*}
Hence the 3-point extremal correction is given by
\begin{equation*}
\langle h^{j_1}, h^{j_2}, \bart h^{j_3}\rangle_+ := \sum_{d \ge 1}
\langle h^{j_1}, h^{j_2}, \bart h^{j_3}\rangle_d\, q^{d\ell} = (W^{F,
F'}_{\m}.\bart )^S (\delta_h) \f(q^\ell).
\end{equation*}
and the corresponding $\bar\psi$-relative invariant is equal to
\begin{equation*}
\langle h^{j_1}, h^{j_2}, h^{j_3}\rangle^{/S}_+ = W^{F,F'}_\m(\delta_h) \f(q^\ell).
\end{equation*}

The constant term of $W^{F, F'}_\m$ is the $\m$th Segre class of
$F + F'^*$. This is what we need because (as in the $\m = 1$ case)
\begin{align*}
& \tilde s_\m \f(q^{\ell}) - (-1)^{j_1 + j_2 + j_3} \tilde
s_\m' \f(q^{\ell'})  = (-1)^r \tilde s_\m.
\end{align*}
That is, the classical defect is corrected.

Similarly, for the $d^i$ component with $i \ge 1$,
\begin{equation*}
w_{\m, i}\, \delta_h^i \f(q^\ell) = w_{\m, i}\, (-\delta_{h'})^i
((-1)^r - \f(q^{\ell'})) = (-1)^{i + 1} w_{\m, i}\, \delta_{h'}^i
\f(q^{\ell'}).
\end{equation*}
This is expected to agree with $(-1)^{j_1 + j_2 + j_3} w'_{\m,
i}\,\delta_{h'}^i \f(q^{\ell'})$. Hence we require the alternating
nature of $W$:
\begin{equation*}
w_{\m, i}(F', F) = (-1)^{\m + i} w_{\m, i}(F, F').
\end{equation*}

\begin{remark}
We ignore the degree zero (classical) invariants in the formulation
since they depends on the global geometry of $X$ and $X'$ and could
not be expressed by local universal formula (only their difference
could be).
\end{remark}

Recall that for $1\le \m \le r - 1$, any 3-point invariant
$\langle \bart_1h^{j_1}, \bart_2h^{j_2}, \bart_3h^{j_3}\rangle_d$ with $1\le
j_i \le r$ and $\sum j_i = (2r + 1) + \m$ is equal to the standard
form $\langle h^{\m + 1}, h^r, \bart h^r \rangle_d$ where $\bart  = \bart_1 \bart_2
\bart_3 \in H^{2(s - \m)}(S)$. The study of it is based on the recursive
formula on extremal corrections $W_\m := \langle h^{\m + 1}, h^r,
h^r \rangle_+^{/S}$:

\begin{proposition}\label{recursive}
\begin{equation*}
W_\m = s_\m \f + \sum_{j = 1}^{\m} W_{\m - j}\big((-1)^r c_j \f
- (-1)^{r + j} c_j' \f - c_j\big).
\end{equation*}
\end{proposition}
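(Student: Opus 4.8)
The plan is to establish Proposition~\ref{recursive} by the same divisorial reconstruction strategy already used for the case $\m=1$, now carried out systematically in the power of $h$ at the first marked point. Concretely, I would start from the standard-form invariant $\langle h^{\m+1}, h^r, h^r\rangle_d^{/S}$ and apply the divisor relation \eqref{div-rel-1} to transfer one unit of $h$ from the first insertion to the second, obtaining
\begin{equation*}
\langle h^{\m+1}, h^r, h^r\rangle_d^{/S} = \langle h^{\m}, h^{r+1}, h^r\rangle_d^{/S} + (\text{boundary terms}).
\end{equation*}
The first term on the right is then rewritten using the Chern polynomial relation $h^{r+1} = -c_1 h^r - c_2 h^{r-1} - \cdots - c_{r+1}$, which lowers the power of $h$ back into the range $1 \le j \le r$ at the cost of multiplying by Chern classes $c_j = c_j(F)$ pulled back from $S$; this produces exactly the terms $-\sum_{j=1}^{\m} W_{\m-j}\, c_j$ after one recognizes that $\langle h^{\m+1-j}, h^r, h^r\rangle_d^{/S}$ assembles (over $d$, against $\f$) into $W_{\m-j}$, and that the top term $c_{r+1}$ contributes nothing because the resulting power of $h$ drops below what the fiber integral can support (or is absorbed into lower $\m$).

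The substantive work is the boundary terms. Splitting the diagonal $\Delta(X) = \sum_\mu T_\mu \otimes T^\mu$ on the projective local model $X_{loc} = P(N_{Z/X}\oplus\mathscr{O})$, the divisor relation contributes, for each splitting $d' + d'' = d$,
\begin{equation*}
\sum_\mu \big( d''\, \langle h, h^r, T_\mu\rangle_{d'}^{/S}\,\langle T^\mu, h^{\m}\rangle_{d''}^{/S} - d'\,\langle h, T_\mu\rangle_{d'}^{/S}\,\langle T^\mu, h^r, h^{\m}\rangle_{d''}^{/S}\big).
\end{equation*}
The terms with a two-point factor $\langle h, T_\mu\rangle_{d'}$ vanish identically, exactly as in the $\m=1$ computation. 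For the remaining sum I would run the dual-basis analysis via Lemma~\ref{defect-1}: the classes $T_\mu$ that can pair nontrivially against a two-point extremal invariant $\langle T^\mu, h^{\m}\rangle_{d''}$ must be of the form $z_i \xi^{r+1-k}$ for suitable $k$, whose duals are $\check z_i \Theta_k$ with $\Theta_k|_Z = c_k(N)$; and since $N = \bar\psi^*F'\otimes\mathscr{O}(-1)$, restricting $\Theta_{r+1}$ to $Z$ produces the combination $(-1)^{r+1}(h^{r+1} - c_1' h^r + \cdots)$, i.e. brings in the dual Chern classes $c_j' = c_j(F')$. The $d'=0$ piece is a classical triple product handled by hand (it contributes another $c_1$-type term in the $\m=1$ check and, in general, feeds $W_{\m-j}$ with a $-c_j$-style coefficient), while the $1\le d''\le d-1$ pieces, after summing the geometric/weighted-geometric series in $d'$ and $d''$ and using that $\langle h^r, h^r\rangle^{\rm simple}$ is known exactly, reassemble into $\sum_{j=1}^\m W_{\m-j}\big((-1)^r c_j \f - (-1)^{r+j} c_j' \f\big)$. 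The sign bookkeeping — tracking the factors $(-1)^{(d-1)(r+1)}$ through the convolution $d = d' + d''$ and through the $(-1)^{r+1}$ coming from $c_{r+1}(N)$ — is where I expect the real care to be needed; the underlying mechanism is just the additivity $(d'-1) + (d''-1) + 1 = d-1$ already exploited in the $\m=1$ case.

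The main obstacle, then, is not any single hard idea but the organization: one must show that every intermediate invariant produced by reconstruction is again of the standard form (so that the induction hypothesis $W_{\m-j} = \langle h^{\m-j+1}, h^r, h^r\rangle_+^{/S}$ applies), that no ``genuinely new'' insertion type appears, and that the various Chern-class coefficients collect precisely into $(-1)^r c_j\f - (-1)^{r+j}c_j'\f - c_j$ and nothing else. A secondary point is to confirm that the reduction of an arbitrary $\langle h^{j_1}, h^{j_2}, \bart h^{j_3}\rangle_d$ (with $\sum j_i = 2r+1+\m$) to the standard form $\langle h^{\m+1}, h^r, \bart h^r\rangle_d$ is legitimate — this follows by the same argument as in the $\m=1$ case, since whenever $j_2 < r$ or $j_3 < r$ the boundary terms in \eqref{div-rel-1} vanish by the two-point knowledge, so one may freely shift powers of $h$ into the first slot until it reaches $\m+1$. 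Granting these structural facts, the recursion in Proposition~\ref{recursive} falls out, and together with $W_0 = s_0\f = \f$ it determines all $W_\m$ as Chern-class-valued polynomials in $\delta_h$ acting on $\f$, which is the form asserted in Theorem~\ref{q-correct}.
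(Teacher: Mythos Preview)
Your approach is essentially the paper's: apply the divisor relation to $\langle h^{\m+1}, h^r, \bart h^r\rangle$, reduce $h^{r+1}$ by the Chern polynomial, and analyze the splitting terms via the dual basis $\{\hatt z_i\,\Theta_{r+1}\}$ on the local model. Two concrete corrections are needed, however.

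First, your displayed boundary formula has the insertions swapped. Moving one $h$ from slot~$1$ to slot~$2$ (with slot~$3$ as the auxiliary marked point) produces
\[
\sum_{d'+d''=d}\sum_\mu\Big(d''\,\langle h^{\m}, h^r, T_\mu\rangle_{d'}\,\langle T^\mu, h^r\rangle_{d''}
- d'\,\langle h^{\m}, T_\mu\rangle_{d'}\,\langle T^\mu, h^r, h^r\rangle_{d''}\Big),
\]
so the two-point factor carries $h^r$, not $h^{\m}$, and the three-point factor carries $h^{\m}$, not a lone $h$. This matters because the next step relies on knowing exactly which two-point invariants survive.

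Second --- and this is the genuine gap --- your account of the $d'=0$ piece is wrong. It does \emph{not} ``feed $W_{\m-j}$ with a $-c_j$-style coefficient''; that is what the Chern-polynomial term already does. The $d'=0$ term is the classical cup product
\[
\sum_\mu \langle h^{\m}, \bart h^r, T_\mu\rangle_0\,\delta_h\langle T^\mu, h^r\rangle_+
= \delta_h\langle \bart\,h^{r+\m}, h^r\rangle_+,
\]
and the point is the \emph{Segre relation} $h^{r+\m} = s_\m h^r + (\text{lower powers of }h)$ in $H(Z)$. Combined with the two-point vanishing $\langle h^j,h^r\rangle_d=0$ for $j<r$, this yields $(s_\m.\bart)^S\,\f$ --- a single Segre-class term, not a sum over~$j$. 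This is precisely the $s_\m\f$ in the proposition, and without it your recursion has no standalone ``constant'' term. (The coincidence $s_1=-c_1$ in the $\m=1$ case is what makes the two contributions look alike there; it does not persist for general $\m$.) Once you correct these two points, the $d',d''\ge 1$ analysis via $T_\mu=\bart_j h^{r+1-j}$, $T^\mu=\hatt\bart_j H_{j-1}\Theta_{r+1}$ and the $h^r$-coefficient computation $(-1)^{r+1}\big((-1)^j c_j'-c_j\big)$ goes through exactly as you outline.
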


\begin{proof}
As in \cite{LLW}, by using the operator $\delta_h$, the divisor
relation can be used to obtain splitting relation of generating
series
\begin{align*}
\langle h^{\m + 1}, h^r, \bart h^r \rangle_+ = \langle h^\m, h^{r +
1}, \bart h^r \rangle_+ + \sum_i \langle h^\m, \bart h^r, T_{\mu} \rangle_+
\delta_h \langle T^{\mu}, h^r \rangle_+ + (s_\m.\bart )^S \f.
\end{align*}
The last term is coming from the case with $d_1 = 0$:
\begin{equation*}
\sum_{\mu} \langle h^\m, \bart h^r, T_{\mu} \rangle_0 \delta_h \langle T^{\mu},
h^r \rangle_+ = \delta_h \langle \bart h^{\m + r}, h^r \rangle_+ =
(s_\m.\bart )^S \f.
\end{equation*}
Here the Segre relation $h^{r + \m } = s_\m h^r + \mbox{(lower
order terms)}$ and the complete knowledge of 2-point invariants is
used.

By the Chern polynomial relation, the first term equals
\begin{equation*}
- \sum_{j = 1}^\m \langle h^\m, c_j h^{r + 1 - j}, \bart h^r
\rangle_+ = -\sum_{j = 1}^\m \langle h^{\m - j + 1}, h^r,
c_j\bart h^r \rangle_+ = -\sum_{j = 1}^\m (W_{\m - j}.c_j\bart )^S.
\end{equation*}

For the second sum, we take the degree $r + 1$ part of $T_{\mu}$'s
being of the form $\{ \bart_j h^{r + 1 - j} \}_{j = 1}^{\m}$ with $\bart_j
\in H^{2j}(S)$ to be determined later. Then as in the previous
calculation, using local models, the corresponding dual basis
$T^{\mu}$'s are given by $\{ \hatt \bart_j H_{j - 1} \Theta_{r + 1} \}_{j =
1}^{\m}$. We need the $h^r$ part of
\begin{align*}
&H_{j - 1} \Theta_{r + 1} \\
&= (-1)^{r + 1} (h^{j - 1} + c_1 h^{j - 2} + \cdots + c_{j -
1})(h^{r + 1} - c_1' h^r + \cdots + (-1)^{r + 1} c_{r + 1}')
\end{align*}
in the standard presentation of $H(Z)$. By $\tilde c := c(F +
F'^*) = c(F)c(F'^*)$, it is $(-1)^{r + 1}$ times the $h^r$ part of
\begin{equation*}
h^r(\tilde c_j - c_j) + h^{r + 1}\tilde c_{j - 1} + h^{r + 2}
\tilde c_{j - 2} + \cdots + h^{r + j}.
\end{equation*}
By the Segre relation and $c(F'^*) = s(F)c(F + F'^*)$, the term is
\begin{equation*}
h^r(\tilde c_j + s_1 \tilde c_{j - 1} + s_2 \tilde c_{j - 2} +
\cdots + s_{j - 1}\tilde c_1 + s_j - c_j) = h^r((-1)^j c_j' -
c_j).
\end{equation*}

Now we let $\bart_j = (-1)^j c_j' - c_j$, and then the sum becomes
\begin{equation*}
(-1)^{r + 1} \sum_{j = 1}^\m \langle h^\m, \bart h^r, \bart_j h^{r + 1 -
j} \rangle_+ \f = (-1)^{r + 1} \sum_{j = 1}^\m (W_{\m - j}((-1)^j
c_j' - c_j)\f.\bart )^S.
\end{equation*}
The result follows by putting the three parts together.
\end{proof}

\begin{theorem}[($=$ Theorem \ref{q-correct})]
The $\bar\psi$-relative invariant over $S$
$$W_\m = \langle
h^{j_1}, h^{j_2}, h^{j_3} \rangle_+^{/S}$$
with $1 \le j_i \le r$, $\m = \sum j_i - (2r + 1) \le r - 1$ is
the action on $\f$ by a universal (in $c(F)$ and $c(F')$) rational
cohomology valued polynomial of degree $\m$ in $\delta_h$, which
is independent of the choices of $j_i$'s and satisfies the
functional equation
\begin{equation*}
W_\m - (-1)^{\m + 1}W_\m' = (-1)^r \tilde s_\m
\end{equation*}
for $0 \le \m \le r - 1$.
\end{theorem}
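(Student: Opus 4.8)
The plan is to prove the three assertions of the theorem separately: (a) $W_\m$ is independent of the admissible triple $(j_1,j_2,j_3)$; (b) $W_\m=P_\m(\delta_h)\f$ for a cohomology-valued polynomial $P_\m$ of degree at most $\m$ whose coefficients $w_{\m,i}\in H^{2\m}(S,\mathbb{Q})$ are universal polynomials in $c(F)$ and $c(F')$; and (c) the functional equation $W_\m-(-1)^{\m+1}W_\m'=(-1)^r\tilde s_\m$. Parts (b) and (c) will both be run by induction on $\m$, with the recursion of Proposition~\ref{recursive} as the only substantial input; (a) is the divisorial reconstruction already used in the text. For (a): if the triple has $j_2<r$ or $j_3<r$, then, exactly as in the reduction to standard form discussed just before Proposition~\ref{recursive}, the splitting terms in the divisor relation \eqref{div-rel-1} vanish because the only nontrivial extremal $2$-point invariant is $\langle h^r,h^r\rangle_d$; thus a power of $h$ may be transferred between marked points, and iterating (while never overflowing past $r$ in the last two slots) reaches the triple $(\m+1,r,r)$ — which uses the hypothesis $\m\le r-1$. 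Hence it suffices to study $W_\m=\langle h^{\m+1},h^r,h^r\rangle_+^{/S}$.

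For (b) I would induct on $\m$. The base case $\m=0$ gives $W_0=\f$ from the divisor axiom $\langle h,h^r,h^r\rangle_d=d\,\langle h^r,h^r\rangle_d=(-1)^{(d-1)(r+1)}$ and the $2$-point formula. The main elementary fact is the Riccati identity $\delta_h\f=\f+(-1)^{r+1}\f^2$, equivalently $\f^2=(-1)^{r+1}(\delta_h-1)\f$, so that $\delta_h\f^k=k\f^k+k(-1)^{r+1}\f^{k+1}$; hence $\{\f,\delta_h\f,\dots,\delta_h^\m\f\}$ and $\{\f,\f^2,\dots,\f^{\m+1}\}$ generate the same $H(S)$-submodule via a triangular change of basis. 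Feeding the inductive hypothesis that $W_{\m-j}$ lies in the $H(S)$-span of $\{\f,\dots,\f^{\m-j+1}\}$ into Proposition~\ref{recursive} — in which the summands $W_{\m-j}\cdot\f$ raise the top power of $\f$ by one while the summands $W_{\m-j}\cdot c_j(F)$ do not — yields $W_\m$ in the $H(S)$-span of $\{\f,\dots,\f^{\m+1}\}$, equivalently of $\{\f,\dots,\delta_h^\m\f\}$, with coefficients that are universal polynomials in the Chern classes; this is (b).

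For (c) I would again induct on $\m$. Write $\,\cdot\,|_{\mathrm c}$ for the analytic continuation to the common domain, under which $q^{\ell'}=q^{-\ell}$ forces $\f(q^{\ell'})\mapsto(-1)^r-\f(q^\ell)$ (using $\f(q)+\f(q^{-1})=(-1)^r$) and $\delta_{h'}\mapsto-\delta_h$; by (b) both $W_\m$ and $W_\m'$ are rational in the Novikov variable, so this is legitimate. The base case reads $W_0-(-1)W_0'|_{\mathrm c}=\f(q^\ell)+\bigl((-1)^r-\f(q^\ell)\bigr)=(-1)^r=(-1)^r\tilde s_0$. For the step, write Proposition~\ref{recursive} as $W_\m=s_\m(F)\f+\sum_{j=1}^\m W_{\m-j}\,g_j$ with transport coefficient $g_j:=\bigl((-1)^r c_j(F)-(-1)^{r+j}c_j(F')\bigr)\f-c_j(F)$, and its mirror for $W_\m'$ with $F\leftrightarrow F'$ and $\f\to\f'$. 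The decisive computation is the sign identity $g_j'|_{\mathrm c}=(-1)^j g_j$, a short chase using $(-1)^{2r}=1$. Rewriting the inductive functional equation as $W_k'|_{\mathrm c}=(-1)^{k+1}\bigl(W_k-(-1)^r\tilde s_k\bigr)$ and substituting, the $W_{\m-j}$-linear terms cancel pairwise, leaving $W_\m-(-1)^{\m+1}W_\m'|_{\mathrm c}=s_\m(F)\f-(-1)^{\m+1}s_\m(F')\bigl((-1)^r-\f\bigr)+(-1)^r\sum_{j=1}^\m\tilde s_{\m-j}\,g_j$. Adjoining a formal index $j=0$ (with $g_0:=-1$, which contributes $-(-1)^r\tilde s_\m$) and evaluating the sum by the Segre contractions $[s(F+F'^*)c(F)]_\m=s_\m(F'^*)=(-1)^\m s_\m(F')$ and $[s(F+F'^*)c(F'^*)]_\m=s_\m(F)$ (which also give $\tilde s_\m'=(-1)^\m\tilde s_\m$), all terms proportional to $\f$ and all $\f$-free terms cancel, and only $(-1)^r\tilde s_\m$ remains.

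The hard part will be the sign bookkeeping in (c): the signs are produced jointly by the continuation $q^{\ell'}=q^{-\ell}$ and the interchange $F\leftrightarrow F'$, and a priori the $W_{\m-j}$-contributions coming from $W_\m$ and from $W_\m'$ are attached to different Chern classes and to $\f$ versus $\f'$, so their cancellation is not transparent. The step that makes the induction close is the single observation that the transport coefficient $g_j$ of Proposition~\ref{recursive} is invariant under analytic continuation up to the sign $(-1)^j$; granting this, what remains is a finite identity among Segre classes of $F+F'^*$ — exactly the one that cancels the $\m$-th Segre term in the topological defect of Theorem~\ref{top-defect}.
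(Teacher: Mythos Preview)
Your proposal is correct and follows essentially the same route as the paper: both arguments use the recursion of Proposition~\ref{recursive} together with the Riccati identity $\delta_h\f=\f+(-1)^{r+1}\f^2$ for part (b), and an induction on $\m$ using the Whitney/Segre identities $[s(F+F'^*)c(F)]_\m=(-1)^\m s_\m(F')$ and $[s(F+F'^*)c(F'^*)]_\m=s_\m(F)$ for part (c). Your repackaging via the transport coefficient $g_j$ and the sign identity $g_j'|_{\mathrm c}=(-1)^j g_j$ is a clean way to make the $W_{\m-j}$-cancellation transparent, but it amounts to the same computation the paper carries out by direct substitution of the inductive hypothesis into $W_\m'$.
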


\begin{proof}
Since $W_0 = \f$, by Proposition \ref{recursive}, it is clear that
$W_\m$ is recursively and uniquely determined, which is a degree
$\m + 1$ polynomial in $\f$ with coefficients being universal
polynomial in $c(F)$ and $c(F')$ of pure degree $\m$.

Let 
$$\delta = \delta_h = q d/dq.$$ 
In order to rewrite $W_\m$ as a degree $\m$ polynomial in $\delta \f$, 
we start with the basic relation
\begin{equation*}
\delta \f = \f + (-1)^{r + 1} \f^2.
\end{equation*}
Since $\delta(fg) = (\delta f)g + f\delta g$, it follows
inductively that $\delta^m \f$ can be expressed as $P_m(\f) = \f +
\cdots + (-1)^{m(r + 1)} m! \f^{m + 1}$ with $P_m$ being an integral
universal polynomial of degree $m + 1$. Solving the upper
triangular system between $\delta^m \f$'s and $\f^{m + 1}$'s gives
$\f^{\m + 1} = (-1)^{m(r + 1)}\delta^\m \f /\m ! + \cdots =
Q_\m(\delta) \f$ with $Q_\m$ being a rational polynomial. Clearly
$W_\m$ then admits a corresponding rational cohomology valued
expression as expected.

It remains to check that $W_\m$ satisfies the required functional
equation
\begin{equation*}
W_\m - (-1)^{\m + 1}W_\m' = (-1)^r \tilde s_\m.
\end{equation*}
We will prove it by induction. The case $\m = 0$ goes back to $\f +
\f' = (-1)^r$ where $\f := \f(q^\ell)$ and $\f' := \f(q^{\ell'}) \equiv
\f(q^{-\ell})$ under the correspondence $\T$.

Assume the functional equation holds for all $j < \m$. Then
\begin{equation*}
W_\m = s_\m \f + \sum_{j = 1}^{\m} W_{\m - j}\big((-1)^r c_j \f
- (-1)^{r + j} c_j' \f - c_j\big),
\end{equation*}
\begin{equation*}
W_\m' = s_\m' \f' + \sum_{j = 1}^{\m} W_{\m - j}'\big((-1)^r
c_j' \f' - (-1)^{r + j} c_j \f' - c_j'\big).
\end{equation*}
By substituting
\begin{equation*}
W_{\m - j}' = (-1)^{\m - j + 1} W_{\m - j} + (-1)^{r + \m - j}
\tilde s_{\m - j}
\end{equation*}
into $W_\m'$, we compute, after cancellations,
\begin{align*}
&W_\m - (-1)^{\m + 1} W_\m' \\
&= s_\m \f + (-1)^\m s_\m' \f' + \sum_{j = 1}^\m \big((-1)^j
\tilde s_{\m - j} c'_j \f' - \tilde s_{\m - j} c_j \f' - (-1)^{r -
j}
\tilde s_{\m - j} c_j'\big) \\
&= s_\m \f + (-1)^\m s_\m' \f' + (s_\m - \tilde s_\m)\f' -
((-1)^\m s_\m' - \tilde s_\m)\f' - (-1)^r (s_\m - \tilde s_\m) \\
&= s_\m(\f + \f') - (-1)^r s_\m  + (-1)^r \tilde s_\m \\
&= (-1)^r \tilde s_\m,
\end{align*}
where both directions of the Whitney sum relations
\begin{equation*}
s(F) = s(F + F'^*)c(F'^*); \qquad s(F'^*) = s(F + F'^*)c(F)
\end{equation*}
are used. The proof is completed.
\end{proof}

\begin{corollary}\label{3-pt-inv}
For any ordinary flop over a smooth base, we have
\begin{equation*}
\T \langle a_1, a_2, a_3 \rangle^X \cong \langle
\T a_1, \T a_2, \T a_3 \rangle^{X'}
\end{equation*}
modulo non-extremal curve classes.
\end{corollary}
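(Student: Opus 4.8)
The goal is to prove Corollary \ref{3-pt-inv}: that $\T\langle a_1,a_2,a_3\rangle^X \cong \langle \T a_1,\T a_2,\T a_3\rangle^{X'}$ modulo non-extremal classes. The plan is to combine the two main results already established, namely the topological defect formula (Theorem \ref{top-defect-1}) and the quantum correction formula (Theorem \ref{q-correct}), and observe that they cancel exactly after analytic continuation.

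First I would unwind the left-hand side. The genus zero $3$-point correlator $\langle a_1,a_2,a_3\rangle^X$ splits into the degree-zero part, which is the classical triple product $(a_1.a_2.a_3)^X$, plus the contributions from the extremal ray $\mathbb{N}\ell$ (all other curve classes are discarded by hypothesis, ``modulo non-extremal curve classes''). Using the decomposition $a_i|_Z = \sum (a_i.\check\tb_{s_i}^{k_i-j_i}H_{r-j_i})\,\tb_{s_i}^{k_i-j_i}h^{j_i}$ from Section \ref{3-prod} together with the reduction of extremal GW invariants of $X$ to $\bar\psi$-relative invariants over $S$ developed in Section \ref{q-correction}, the extremal part of $\langle a_1,a_2,a_3\rangle^X$ becomes
\begin{equation*}
\sum (a_1.\check\tb_{s_1}^{k_1-j_1}H_{r-j_1})(a_2.\check\tb_{s_2}^{k_2-j_2}H_{r-j_2})(a_3.\check\tb_{s_3}^{k_3-j_3}H_{r-j_3})\,\big(W_\m\,\tb_{s_1}^{k_1-j_1}\tb_{s_2}^{k_2-j_2}\tb_{s_3}^{k_3-j_3}\big)^S,
\end{equation*}
where $\m = j_1+j_2+j_3-(2r+1)$ and $W_\m = \langle h^{j_1},h^{j_2},h^{j_3}\rangle_+^{/S}$ is the extremal correction series, which by Theorem \ref{q-correct} depends only on $\m$ (not on the individual $j_i$). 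The same expansion applies on the $X'$ side. Because $\T H_k = (-1)^{r-k}H'_k$ and $\T$ preserves the Poincar\'e pairing, one has $(\T a_i.\check\tb_{s_i}^{k_i-j_i}H'_{r-j_i})^{X'} = (-1)^{j_i}(a_i.\check\tb_{s_i}^{k_i-j_i}H_{r-j_i})^X$, so the geometric data match up to the sign $(-1)^{j_1+j_2+j_3} = (-1)^{\m+1}$ (since $j_1+j_2+j_3 = 2r+1+\m$ is odd iff $\m$ is even, matching parity of $\m+1$ in the even-cohomology setting).

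Second, I would assemble the difference. Applying $\T$ to the whole correlator and subtracting, the classical part contributes exactly the topological defect of Theorem \ref{top-defect-1}, which is $(-1)^r$ times a sum of the same triple products against $\tilde s_\m$ paired on $S$. The extremal part contributes, after analytic continuation $\T q^\ell = q^{-\ell'}$ (so that $\T\f(q^\ell) = \f(q^{-\ell'}) = \f'$), the difference $W_\m - (-1)^{\m+1}W_\m'$ evaluated on $\f$. By the functional equation in Theorem \ref{q-correct}, $W_\m - (-1)^{\m+1}W_\m' = (-1)^r\tilde s_\m$, which is precisely the negative of the topological defect term. Hence the classical defect and the quantum correction defect cancel, term by term in each multi-index $(\vec s,\vec j)$, and $\T\langle a_1,a_2,a_3\rangle^X \cong \langle \T a_1,\T a_2,\T a_3\rangle^{X'}$. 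One should also note the boundary case $\m = 0$, where the extremal invariant reduces to the simple-flop case and $W_0 = \f$ satisfies $\f+\f' = (-1)^r$, consistent with $\tilde s_0 = 1$; and the constraint $\m \le r-1$ needed for Theorem \ref{q-correct}, while for $\m \ge r$ the classes $h^{j_i}$ with $j_i > r$ must be rewritten via the Segre relation $h^{r+\m} = s_\m h^r + (\text{lower order})$ as noted in Remark \ref{caution}, reducing everything back to the $\m \le r-1$ situation; since $\sum k_i = \dim X = s+2r+1$ forces each $j_i \le r$, the relevant range is automatically $0 \le \m \le r$, and the edge $\m = r$ is handled the same way.

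The main obstacle is bookkeeping rather than conceptual: one must check that the sign $(-1)^{\m+1}$ coming from $\prod(-1)^{j_i}$ under the pairing matches the sign $(-1)^{\m+1}$ in the functional equation of $W_\m$, and that the Segre-class factor $\tilde s_\m = s_\m(F+F'^*)$ appearing in the topological defect is literally the constant term / full correction matching $W_\m - (-1)^{\m+1}W_\m'$ — these are exactly the compatibilities engineered in the formulations of Theorems \ref{top-defect-1} and \ref{q-correct}, so the proof is essentially a one-line citation of both once the correlator is expanded. The only genuine subtlety is the phrase ``modulo non-extremal curve classes'': the statement is not an honest equality of quantum products but an identity of the extremal-plus-classical truncation, and one should make explicit that analytic continuation is used only in the single $P^1$-direction $q^\ell \mapsto q^{-\ell'}$ where $\f$ is manifestly rational, so no deep analytic continuation is invoked at this stage.
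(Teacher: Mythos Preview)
Your proposal is correct and follows exactly the approach the paper intends: the corollary is immediate from combining Theorem \ref{top-defect-1} with the functional equation in Theorem \ref{q-correct}, and the surrounding text in the paper spells out precisely the sign-matching and cancellation you describe. One small correction: since $1 \le j_i \le r$ for all three insertions, the codimension satisfies $\m = \sum j_i - (2r+1) \le 3r - (2r+1) = r-1$ automatically, so there is no ``edge $\m = r$'' case to worry about and your remark about rewriting $h^{j_i}$ for $j_i > r$ is unnecessary here.
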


\subsection{Functional equations for $n \ge 3$ point extremal
functions}

For ordinary flops over any smooth base, we will show that
Corollary \ref{3-pt-inv} extends to all $n \ge 4$. Namely
\begin{equation*}
\T \langle a_1, \cdots, a_n \rangle^X \cong \langle
\T a_1, \cdots, \T a_n \rangle^{X'}
\end{equation*}
modulo non-extremal curve classes.

By restricting to $Z$ and $Z'$, it is equivalent to the nice
looking formula
\begin{equation*}
\T\langle h^{j_1}, \cdots, \bart h^{j_n} \rangle \cong (-1)^{\sum
j_i}\langle h'^{j_1}, \cdots, \bart h'^{j_n} \rangle
\end{equation*}
for all $1 \le j_l \le r$, where for notational simplicity the
$n$-point functions in this section refer to {\em extremal
functions}, that is, the sum is only over $\mathbb{Z}_+\ell$.

Notices that $\T(\bart h^j) = (-1)^j\bart h'^j$ only for $j \le r$ and it
fails in general for $j > r$ if the base $S$ is non-trivial. In
fact, we have

\begin{lemma} \label{defect-2}
\begin{equation*}
\T (h^{r + 1}) - (\T h)^{r + 1} = (-1)^{r + 1} \T \Theta_{r + 1}
\end{equation*}
along $Z'$
\end{lemma}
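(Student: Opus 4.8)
The plan is to compute both sides explicitly in the standard presentation of $H(Z')$ (equivalently $H(X'_{loc})$) using the results already established, and verify the identity term by term. We work with everything restricted to $Z'$, identifying $H(S)$ with its image under $\bar\psi'^*$, and abbreviate $c_i = c_i(F)$, $c_i' = c_i(F')$ as in the conventions. The key inputs are: Proposition~\ref{H} (i.e.\ $\T H_k = (-1)^{r-k}H_k'$ and by the subsequent remark $\T h^k = (-1)^{r-k}(a_0 h'^k + \cdots + a_k)$ with $a_k = c_k(F'-F^*)$), the Chern polynomial relation $h^{r+1} = -c_1 h^r - \cdots - c_{r+1}$ on $Z$ (and its primed analogue on $Z'$), and the definition $\Theta_{r+1} = c_{r+1}(Q_N)$ from Lemma~\ref{defect-1}, together with $c_{r+1}(N) = (-1)^{r+1}(h^{r+1} - c_1' h^r + \cdots + (-1)^{r+1}c_{r+1}')$ coming from $N = \bar\psi^*F'\otimes\mathscr{O}(-1)$.

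First I would expand the left-hand side. Since $\T$ is $A(S)$-linear on $A(Z)$, applying $\T$ to the relation $h^{r+1} = -\sum_{k=1}^{r+1} c_k h^{r+1-k}$ gives $\T(h^{r+1}) = -\sum_{k=1}^{r+1} c_k\,\T(h^{r+1-k})$, and each $\T(h^{r+1-k})$ for $r+1-k\le r$ is the explicit polynomial in $h'$ from the Remark after Proposition~\ref{H}. On the other hand $(\T h)^{r+1}$ is the $(r+1)$-st power of $(-1)^r(h' + a_1)$ where $a_1 = c_1' - c_1 = c_1(F'-F^*)$; here one must also use the primed Chern relation $h'^{r+1} = -\sum c_k' h'^{r+1-k}$ to bring any power $\ge r+1$ back into the standard basis $\{1, h', \dots, h'^r\}$ of $H(Z')$ over $H(S)$. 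Then I would expand the right-hand side: $\Theta_{r+1}$ on the $Z'$ side is $c_{r+1}(Q_{N'})$ where $N' = N_{Z'/X'} = \bar\psi'^*F\otimes\mathscr{O}_{Z'}(-1)$, so swapping the roles of $F,F'$ in Lemma~\ref{defect-1} gives $\Theta_{r+1} = c_{r+1}(Q_{N'}) = \xi'^{r+1} + \cdots$; but restricted along $Z'$ we only need $\Theta_{r+1}|_{Z'} = c_{r+1}(N') = (-1)^{r+1}(h'^{r+1} - c_1 h'^r + \cdots + (-1)^{r+1} c_{r+1})$. Multiplying by $(-1)^{r+1}$ and reducing $h'^{r+1}$ via the primed Chern relation yields a polynomial in $h', \dots, h'^r$ with $H(S)$-coefficients. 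The claim is then a polynomial identity in $H(S)[h']/f_{F'}(h')$, which I would verify by matching coefficients of each $h'^j$.

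To organize the bookkeeping cleanly, rather than brute-forcing the binomial expansion of $((-1)^r(h'+a_1))^{r+1}$, I would instead argue via generating functions / Segre classes as in the proof of Theorem~\ref{top-defect-1}: express $\T h^k$ through the identity $\T h^k = (-1)^{r-k}(c(F'-F^*)\text{-series in }h')$, package $\sum_k$ using $c(F'-F^*) = c(F')/c(F^*)$, and recognize the difference $\T(h^{r+1}) - (\T h)^{r+1}$ as living in the ideal generated by $f_{F'}(h')$ modulo a correction that is exactly $(-1)^{r+1}c_{r+1}(N')$ after reduction. Concretely: $\T(h^{r+1})$ equals $-\sum c_k \T(h^{r+1-k})$, and since each $\T(h^j)$ ($j\le r$) is $(-1)^{r-j}$ times the degree-$j$ truncation of $c(F'-F^*)\sum_{m\ge0}h'^m$, one assembles $\T(h^{r+1})$ as $(-1)^{r+1}$ times the degree-$(r+1)$ part of $-\big(\sum c_k h'^{-k}\big)\cdot c(F'-F^*)\cdot\frac{1}{1-h'}$ modulo the relation; comparison with $(\T h)^{r+1}$, which is $(-1)^{r+1}$ times the degree-$(r+1)$ part of the expansion built only from $h'$ and $a_1 = c_1(F'-F^*)$, isolates the discrepancy as the "genuinely base-dependent" tail, matching $(-1)^{r+1}c(F)$-twisted terms, i.e.\ $(-1)^{r+1}\Theta_{r+1}$.

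The main obstacle I anticipate is the combinatorial/algebraic reduction in the middle: both $\T(h^{r+1})$ and $(\T h)^{r+1}$ naturally come out as polynomials in $h'$ of degree $r+1$ (or higher, for the latter), and one must repeatedly apply the primed Chern relation $f_{F'}(h')=0$ to land in the canonical basis before the coefficients can be compared; keeping track of which Segre/Chern terms of $F$ versus $F'$ survive — and seeing that precisely the combination defining $c_{r+1}(N') = c_{r+1}(\bar\psi'^*F\otimes\mathscr{O}_{Z'}(-1))$ appears — is the delicate point. The cleanest route is probably to avoid expanding $(\T h)^{r+1}$ directly and instead use that $\T$ is a ring homomorphism on the subring generated by the $H_k$'s together with $H(S)$ (Proposition~\ref{H} and the Remark), write $h' = H_1' - c_1'$ and $h = H_1 - c_1$, so that $\T h = \T H_1 - c_1' = -H_1' + (\text{correction}) \cdot$, push the computation to the $H_k$ side where $\T$ is multiplicative, and only at the very end translate back. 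That trades binomial expansions for a single application of the Chern relation and makes the identity essentially bookkeeping of Segre classes, exactly parallel to the computation already carried out in the proof of Theorem~\ref{top-defect-1}.
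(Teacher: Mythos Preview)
Your approach has a conceptual error that derails the computation. You invoke the formula $\T(h^k) = (-1)^{r-k}(h'^k + a_1 h'^{k-1} + \cdots + a_k)$ from the Remark following Proposition~\ref{H}, but that formula describes the map $A(Z) \to A(Z')$ induced via pushforward, i.e., it computes $\T(i_* h^k)$. In the present lemma (and throughout \S\ref{q-correction}) the symbol $h^k$ stands for a class in $H(X_{loc})$ pulled back from $Z$, and what matters for extremal invariants is $(\T h^k)|_{Z'}$. By Remark~\ref{caution} this is simply $(-1)^k h'^k$ for $k \le r$, with no correction terms $a_l$. These two objects are genuinely different, and the identity you are trying to verify is actually false under your interpretation: already for $r = 1$ with nontrivial base the $h'$-coefficients on the two sides disagree. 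Your treatment of the right-hand side has the same issue: $\T\Theta_{r+1}$ means the correspondence applied to $\Theta_{r+1} \in H(X_{loc})$, not the analogous class $c_{r+1}(Q_{N'})$ on $X'$, so identifying the restriction with $c_{r+1}(N')$ is not justified (in fact it is off by a sign $(-1)^r$).

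Once the correct formula $\T(h^j)|_{Z'} = (-1)^j h'^j$ is used, the lemma is immediate from Lemma~\ref{defect-1}, which is all the paper's one-sentence proof is saying. Apply $\T$ (then restrict to $Z'$) to the identity
\[
(-1)^{r+1}\Theta_{r+1}|_Z = h^{r+1} - c_1' h^r + c_2' h^{r-1} - \cdots + (-1)^{r+1} c'_{r+1},
\]
substitute $(-1)^{r+1-k}h'^{r+1-k}$ for each $\T(h^{r+1-k})$ with $k \ge 1$, and observe that the resulting tail $\sum_{k \ge 1}(-1)^k c'_k (-1)^{r+1-k} h'^{r+1-k} = -(-1)^{r+1} h'^{r+1}$ by the Chern relation $f_{F'}(h') = 0$ on $Z'$; since $(\T h)|_{Z'} = -h'$ this is exactly $-(\T h)^{r+1}$. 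No Segre-class bookkeeping, generating functions, or binomial expansions are needed.
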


\begin{proof}
This is simply a reformulation of Lemma \ref{defect-1}.
\end{proof}

It is easy to see that $\T\langle h^{j_1}, \cdots, \bart h^{j_n} \rangle
\not\cong (-1)^{\sum j_i}\langle h'^{j_1}, \cdots, \bart h'^{j_n}
\rangle$ if some $j_l > r$. This appears as the subtle point in
proving the functional equations for $n \ge 4$ points. The above
lemma plays a crucial role in analyzing this.

\begin{theorem}\label{big-q}
Let $f: X \dasharrow X'$ be an ordinary $P^r$ flop with exceptional
loci $Z = P(F) \to S$ and $Z' = P(F') \to S$. Then for $n \ge 3$,
\begin{equation*}
\T\langle h^{j_1}, \cdots, \bart h^{j_n} \rangle^X \cong \langle \T
h^{j_1}, \cdots, \T \bart h^{j_n} \rangle^{X'}
\end{equation*}
for all $j_l$'s and $\bart  \in H^{2(s - \m)}(S)$ with $\m = \sum_{l =
1}^n j_l - (2r + 1 + n - 3)$.
\end{theorem}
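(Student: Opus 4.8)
The plan is to prove Theorem~\ref{big-q} by induction on $n$, using divisorial reconstruction to reduce the $n$-point case to the $(n-1)$-point case, with the $n=3$ case already established in Theorem~\ref{q-correct} (equivalently Corollary~\ref{3-pt-inv}). The strategy is to take an $n$-point extremal invariant $\langle h^{j_1},\dots,\bart h^{j_n}\rangle^X_d$, apply the divisor relation \eqref{div-rel-1} to lower the exponent $j_1$ one step at a time. Each application produces a ``main term'' $\langle h^{j_1-1}, h^{j_2+1},\dots,\bart h^{j_n}\rangle_d$ together with boundary (splitting) terms of the shape $\langle h^{j_1-1}, \dots, T_\mu\rangle_{d'}\,\delta_h\langle T^\mu, h^{j_2},\dots\rangle_{d''}$, which by induction on $n$ are already known to be $\T$-compatible. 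So after passing to generating functions in $q^\ell$, the only thing to control is how the main-term reduction interacts with $\T$.

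The key technical point is exactly what Lemma~\ref{defect-2} (and Remark~\ref{caution}) flags: raising one exponent eventually produces $h^{r+1}$, which must be rewritten via the Chern polynomial relation $h^{r+1} = -c_1 h^r - \cdots - c_{r+1}$, pulling in classes from $S$. The naive expectation $\T(\bart h^j) = (-1)^j \bart h'^j$ fails for $j > r$; instead one has the correction $\T(h^{r+1}) - (\T h)^{r+1} = (-1)^{r+1}\T\Theta_{r+1}$ along $Z'$, where $\Theta_{r+1} = c_{r+1}(Q_N)$. So I would carefully track, on both the $X$ and $X'$ sides, the discrepancy between ``reduce then apply $\T$'' and ``apply $\T$ then reduce'', and show via Lemma~\ref{defect-2} that the discrepancies match up under analytic continuation. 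Concretely, when $j_1 + \cdots + j_n$ is large enough that the reduction forces powers exceeding $r$, the extra terms governed by $\Theta_{r+1}$ on the $X$-side correspond precisely (under $\T$) to the $\Theta'_{r+1}$-terms on the $X'$-side, because $\T$ intertwines $N_{Z/X}$ and $N_{Z'/X'}$ with $F,F'$ exchanged and the Segre/Chern bookkeeping of Theorem~\ref{q-correct} is alternating in the right way.

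The organizing device is to phrase everything in terms of the $\bar\psi$-relative invariants $\langle h^{j_1},\dots,h^{j_n}\rangle^{/S}_+ \in A(S)[\f]$ and to show these are ``universal'' (polynomial in $c(F), c(F')$ acting on $\f$ via $\delta_h$) and transform under $F \leftrightarrow F'$, $\f(q^\ell) \leftrightarrow \f(q^{\ell'})$, $h \leftrightarrow h'$ with the sign $(-1)^{\sum j_i}$, just as in the $n=3$ case. Induction on $n$ gives universality and the transformation law simultaneously: the main term and the boundary terms are each handled by the inductive hypothesis (the boundary two-point factors $\langle T^\mu, h^r\rangle_+$ are explicitly known), and the $h^{r+1}$-reduction step is closed off by Lemma~\ref{defect-2}. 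The WDVV/divisor-axiom manipulations are routine; the one genuine subtlety — and the main obstacle — is the careful sign- and Chern-class-bookkeeping when exponents cross the threshold $r$, i.e.\ making the $\Theta_{r+1}$ corrections on the two sides cancel against each other after analytic continuation. Once that is in place, the theorem follows by summing all contributions and invoking $\f(q^\ell) + \f(q^{-\ell}) = (-1)^r$ together with its $\delta_h$-derivatives.
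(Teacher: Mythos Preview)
Your overall architecture matches the paper's: induction on $n$ with base case $n=3$ (Corollary~\ref{3-pt-inv}), the divisor relation \eqref{div-rel-1} applied at $(i,j,k)=(1,2,3)$ to peel one power of $h$ off the first slot, and Lemma~\ref{defect-2} to control the $h^{r+1}$ threshold. That part is fine.

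There is, however, a real gap in how you handle the boundary terms. You assert that the splitting terms ``by induction on $n$ are already known to be $\T$-compatible.'' This is false when one factor degenerates to the two-point function $\delta_h\langle h^{j_2}, T^\mu\rangle$, which can only happen when $j_2=r$ and $T^\mu = \mathrm{pt}\cdot h^r$. The two-point extremal function is \emph{not} $\T$-invariant: one has
\[
\T\,\delta_h\langle h^r, T^\mu\rangle \;=\; \delta_{\T h}\langle \T h^r, \T T^\mu\rangle \;+\; (-1)^r,
\]
the extra $(-1)^r$ being precisely the classical defect encoded in $\f(q)+\f(q^{-1})=(-1)^r$. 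The dual class appearing in the other factor is then $T_\mu|_Z = \Theta_{r+1}|_Z$, and it is \emph{this} extra $(-1)^r$-term, combined with Lemma~\ref{defect-2}, that cancels the main-term discrepancy between $\T(h^{r+1})$ and $(\T h)^{r+1}$ when $j_2=r$. In other words, the cancellation is not between ``$X$-side reduction'' and ``$X'$-side reduction'' of the main term alone (as you suggest), but between the main-term defect and the two-point boundary defect. Your proposal names both ingredients but does not identify this as the actual mechanism; without it the induction does not close.

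A smaller point: the main term $\langle h^{j_1-1}, h^{j_2+1}, h^{j_3},\dots\rangle$ still has $n$ insertions, so it is not covered by induction on $n$. The paper runs a secondary induction on $j_1$ (with base case $j_1=1$ via the divisor axiom) to handle it; you should make this explicit rather than folding it into ``the inductive hypothesis.'' Finally, your third paragraph about universality of the $\bar\psi$-relative invariants in $A(S)[\f]$ is a pleasant restatement of what one is proving, but it is not an independent tool here---the paper does not use it in the $n\ge 4$ step, and neither need you.
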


\begin{proof}
This holds for $n = 3$ by Corollary \ref{3-pt-inv}. Suppose this
has been proven up to some $n \ge 3$. The basic idea is that an
iterated application of the divisor relation using the operator
$\delta_h$ should allow us to reduce an $n + 1$ point extremal
function to ones with fewer marked points. The technical details
however should be traced carefully.

The first point to make is on the diagonal splitting $\Delta(X) =
\sum T_{\mu} \otimes T^{\mu}$. Since the Poincar\'e pairing is preserved,
$\T T^{\mu}$ is still the dual basis of $\T T_{\mu}$ in $H(X')$. Thus we
may take the diagonal splitting on the $X'$ side to be $\Delta(X')
= \sum \T T_{\mu} \otimes \T T^{\mu}$.

We only need to prove the case that all $j_l \le r$. The $P^1$
flops always have $\m = 0$ and the proof is reduced to the simple
case. So we assume that $r \ge 2$.

We will prove the functional equation by further induction on
$j_1$. The case $j_1 = 1$ holds by the divisor axiom and induction,
so we assume that $j_1 \ge 2$. By applying the divisor relation to
$(i,j, k) = (1, 2, 3)$, we get
\begin{align*}
& \langle h^{j_1}, h^{j_2}, h^{j_3}, \cdots \rangle = \langle
h^{j_1 - 1}, h^{j_2 + 1}, h^{j_3}, \cdots \rangle  \\
& + \sum_{\mu} \langle h^{j_1 - 1}, h^{j_3}, \cdots, T_{\mu} \rangle
\delta_h \langle h^{j_2}, \cdots, T^{\mu} \rangle - \delta_h \langle
h^{j_1 - 1}, \cdots, T_{\mu} \rangle \langle h^{j_2}, h^{j_3}, \cdots,
T^{\mu} \rangle.
\end{align*}

Since $j_1 - 1 < r$, $\langle h^{j_1 - 1}, \cdots, T_{\mu} \rangle$ can
not be a 2-point invariant unless it is trivial. Hence we may
assume that $\langle h^{j_2}, h^{j_3}, \cdots, T^{\mu} \rangle$ has
fewer points.

The term $\langle h^{j_1 - 1}, h^{j_2+1}, h^{j_3},
\cdots \rangle$ is also handled by induction since $j_1 - 1 < j_1$.
Thus we may apply $\T$ to the equation and apply induction to get
\begin{align*}
\T\langle h^{j_1}, h^{j_2}, h^{j_3}, \cdots \rangle &= \langle
\T h^{j_1 - 1}, \T h^{j_2 + 1}, \T h^{j_3}, \cdots \rangle  \\
& \qquad + \sum_{\mu} \langle \T h^{j_1 - 1}, \T h^{j_3}, \cdots, \T
T_{\mu} \rangle \delta_{\T h} \T \langle h^{j_2}, \cdots, T^{\mu} \rangle
\\
&\qquad \qquad - \delta_{\T h} \langle \T h^{j_1 - 1}, \cdots, \T
T_{\mu} \rangle \langle \T h^{j_2}, \T h^{j_3}, \cdots, \T T^{\mu}
\rangle,
\end{align*}
where $\T \circ \delta_h = \delta_{\T h} \circ \T$ by \cite{LLW},
Lemma 5.5.

Notice that in the first summand,
\begin{equation*}
\T \langle h^{j_2}, \cdots, T^{\mu} \rangle = \langle \T h^{j_2},
\cdots, \T T^{\mu} \rangle
\end{equation*}
if it is not a 2-point invariant. Also the 2-point case survives
precisely when $j_2 = r$ and $T^{\mu} = \mbox{pt}.h^r$. In that case,
by the invariance of 3-point extremal functions in the $\m = 0$
(simple) case, the corresponding term becomes
\begin{align*}
\T \delta_h \langle h^r, T^{\mu} \rangle &= \T \langle h, h^r , T^{\mu}
\rangle_+ \\
&= \langle \T h, \T h^r, \T T^{\mu} \rangle_+ + (-1)^r = \delta_{\T h}
\langle \T h^r, \T T^{\mu} \rangle + (-1)^r.
\end{align*}
Also $T_{\mu}|_Z = \Theta_{r + 1}|_Z$. Hence by Lemma \ref{defect-2}
the extra $(-1)^r$ contributes
\begin{equation*}
-\langle \T h^{j_1 - 1}, \T h^{j_3}, \cdots, \T h^{r + 1}\rangle -
\langle \T h^{j_1 - 1}, \T h^{j_3}, \cdots, (\T h)^{r + 1}\rangle.
\end{equation*}
Since $j_2 = r$, the LHS cancels with the first term in the divisor
relation and we end up with the RHS as the main term.

Now we compare it with the similar divisor relation for
\begin{equation*}
\langle \T h^{j_1}, \T h^{j_2}, \T h^{j_3}, \cdots \rangle =
\langle \T h.\T h^{j_1 - 1}, \T h^{j_2}, \T h^{j_3}, \cdots
\rangle
\end{equation*}
under the diagonal splitting $\Delta(X') = \sum_{\mu} \T T_{\mu} \otimes \T
T^{\mu}$. Namely
\begin{align*}
 &\langle \T h^{j_1}, \T h^{j_2}, \T h^{j_3}, \cdots \rangle \\
 &= \langle
\T h^{j_1 - 1}, \T h.\T h^{j_2}, \T h^{j_3}, \cdots \rangle  \\
&\quad + \sum_{\mu} \langle \T h^{j_1 - 1}, \T h^{j_3}, \cdots, \T
T_{\mu} \rangle \delta_{\T h} \langle \T h^{j_2}, \cdots, \T T^{\mu}
\rangle \\
 &\qquad - \delta_{\T h} \langle \T h^{j_1 - 1}, \cdots, \T
T_{\mu} \rangle \langle \T h^{j_2}, \T h^{j_3}, \cdots, \T T^{\mu}
\rangle.
\end{align*}

If $j_2 < r$ then there is no 2-point splitting and $\T h.\T
h^{j_2} = \T h^{j_2 + 1}$, hence the functional equation holds. If
$j_2 = r$ then $\T h.\T h^r = (\T h)^{r + 1}$. This again agrees
with the main term obtained above. Hence the proof of functional
equations is complete by induction.
\end{proof}

Formula for $W_{\vec j} := \langle h^{j_1}, \cdots, h^{j_n}
\rangle^{/S}$ can be achieved by a similar process as in Lemma
\ref{recursive}, whose exact form would not be pursued here. In
general it depends on the vector $\vec j$ instead of $\sum j_i$.

\begin{remark}
Theorem \ref{q-correct} and \ref{big-q} (for the special case $F' = F^*$) have been applied in \cite{FW} to study stratified Mukai flops. In particular they provide non-trivial quantum corrections to flops of type $A_{n, 2}$, $D_5$ and $E_{6, I}$.
\end{remark}

\section{Degeneration analysis revisited} \label{degeneration}

Our next task is to \emph{compare} the Gromov--Witten invariants of
$X$ and $X'$ for all genera and for curve classes other than the
flopped curve. As in \cite{LLW}, we use the degeneration formula
\cite{LiRu, Li} to reduce the problem to local models. This
has been achieved for \emph{simple} ordinary flops in \cite{LLW}
for \emph{genus zero} invariants. In this section we extend the
argument to the general case and establish Theorem \ref{deg-red}
($=$ Proposition \ref{reduction} $+$ \ref{p:4.8}) in the
introduction.

\subsection{The degeneration formula} We start by reviewing the
basic setup. Details can be found in the above references.

Consider a pair $(Y, E)$ with $E \hookrightarrow Y$ a smooth
divisor. Let $\Gamma = (g, n, \beta, \rho, \mu)$ with $\mu =
(\mu_1, \ldots, \mu_\rho) \in \mathbb{N}^\rho$ a partition of the
intersection number $(\beta.E)= |\mu| := \sum_{i = 1}^\rho \mu_i$.
For $A \in H(Y)^{\otimes n}$ and $\s \in H(E)^{\otimes \rho}$,
the relative invariant of stable maps with topological type
$\Gamma$ (i.e.~with contact order $\mu_i$ in $E$ at the $i$-th
contact point) is
\begin{equation*}
\langle A \mid \s, \mu \rangle^{(Y, E)}_{\Gamma} :=
\int_{[\overline{M}_{\Gamma}(Y, E)]^{virt}} e_Y^* A \cup e_E^* \s
\end{equation*}
where $e_Y: \overline{M}_{\Gamma}(Y, E) \to Y^n$, $e_E:
\overline{M}_{\Gamma}(Y, E) \to E^\rho$ are evaluation maps on
marked points and contact points respectively. If $\Gamma =
\coprod_\pi \Gamma^\pi$, the relative invariant with disconnected
domain curve is defined by the product rule:
\begin{equation*}
\langle A \mid \s, \mu \rangle^{\bullet (Y, E)}_{\Gamma} :=
\prod_\pi \langle A \mid \s, \mu \rangle^{(Y, E)}_{\Gamma^\pi}.
\end{equation*}

We apply the degeneration formula to the following situation. Let
$X$ be a smooth variety and $Z \subset X$ be a smooth subvariety.
Let $\Phi:W \to \mathscr{X}$ be its \emph{degeneration to the
normal cone}, the blow-up of $X \times \mathbb{A}^1$ along $Z
\times \{0\}$. Let $t\in\mathbb{A}^1$. Then $W_t \cong X$ for all
$t\ne 0$ and $W_0 = Y_1 \cup Y_2$ with
\begin{equation*}
\phi =\Phi|_{Y_1}: Y_1 \to X
\end{equation*}
the blow-up along $Z$ and
\begin{equation*}
p = \Phi|_{Y_2}: Y_2 := P(N_{Z/X}\oplus \mathscr{O})
\to Z\subset X
\end{equation*}
the projective completion of the normal bundle. $Y_1 \cap Y_2 =: E
= P(N_{Z/X})$ is the $\phi$-exceptional divisor which consists of
the infinity part.

The family $W \to \mathbb{A}^1$ is a degeneration of a trivial
family, so all cohomology classes $\alpha\in
H(X,\mathbb{Z})^{\oplus n}$ have global liftings and the
restriction $\alpha(t)$ on $W_t$ is defined for all $t$. Let $j_i:
Y_i\hookrightarrow W_0$ be the inclusion maps for $i = 1, 2$. Let
$\{\e_i\}$ be a basis of $H(E)$ with $\{\e^i\}$ its dual basis.
$\{\e_I\}$ forms a basis of $H(E^\rho)$ with dual basis
$\{\e^I\}$ where $|I| = \rho$, $\e_I = \e_{i_1}\otimes \cdots
\otimes \e_{i_\rho}$. The \emph{degeneration formula} expresses the
absolute invariants of $X$ in terms of the relative invariants of
the two smooth pairs $(Y_1,E)$ and $(Y_2, E)$:
\begin{equation*}
\langle\alpha\rangle_{g,n,\beta}^X = \sum_{I} \sum_{\eta\in
\Omega_\beta} C_\eta \left.\Big\langle j_1^*\alpha(0) \,\right|\,
\e_I, \mu \Big\rangle_{\Gamma_1}^{\bullet (Y_1,E)}
\left.\Big\langle j_2^*\alpha(0) \,\right|\, \e^I, \mu
\Big\rangle_{\Gamma_2}^{\bullet (Y_2,E)}.
\end{equation*}
Here $\eta = (\Gamma_1, \Gamma_2, I_\rho)$ is an {\it admissible
triple} which consists of (possibly disconnected) topological types
\begin{equation*}
\Gamma_i = \coprod\nolimits_{\pi = 1}^{|\Gamma_i|} \Gamma_i^\pi
\end{equation*}
with the same partition $\mu$ of contact order under the
identification $I_\rho$ of contact points. The gluing $\Gamma_1
+_{I_\rho} \Gamma_2$ has type $(g, n, \beta)$ and is connected. In
particular, $\rho = 0$ if and only if that one of the $\Gamma_i$ is
empty. The total genus $g_i$, total number of marked points $n_i$
and the total degree $\beta_i \in NE(Y_i)$ satisfy the splitting
relations
\begin{equation*}
\begin{split}
g - 1 &= \sum\nolimits_{\pi = 1}^{|\Gamma_1|} (g_1(\pi) - 1)
+ \sum\nolimits_{\pi = 1}^{|\Gamma_2|} (g_2(\pi) - 1) + \rho\\
&= g_1 + g_2 -|\Gamma_1| -|\Gamma_2| + \rho, \\
n &= n_1 + n_2, \\
\beta &= \phi_*\beta_1 + p_*\beta_2.
\end{split}
\end{equation*}
(The first one is the arithmetic genus relation for nodal curves.)

The constants $C_\eta = m(\mu)/|{\rm Aut}\,\eta|$, where $m(\mu) =
\prod \mu_i$ and ${\rm Aut}\,\eta = \{\, \sigma \in S_\rho \mid
\eta^\sigma = \eta \,\}$. We denote by $\Omega$ the set of
equivalence classes of all admissible triples; by $\Omega_\beta$
and $\Omega_\mu$ the subset with fixed degree $\beta$ and fixed
contact order $\mu$ respectively.

Given an ordinary flop $f : X \dashrightarrow X'$, we apply
degeneration to the normal cone to both $X$ and $X'$. Then $Y_1
\cong Y'_1$ and $E = E'$ by the definition of ordinary flops. The
following notations will be used
\begin{equation*}
 Y := {\rm Bl}_Z X \cong Y_1 \cong Y'_1, \quad
 \tilde{E} :=  P(N_{Z/X}\oplus \mathscr{O}), \quad
 \tilde{E}' :=  P(N_{Z'/X'}\oplus \mathscr{O}).
\end{equation*}

Next we discuss the presentation of $\alpha(0)$. Denote by $\iota_1
\equiv j: E \hookrightarrow Y_1 = Y$ and $\iota_2: E
\hookrightarrow Y_2 = \tilde E$ the natural inclusions. The class
$\alpha(0)$ can be represented by $(j_1^*\alpha(0), j_2^*\alpha(0))
= (\alpha_1, \alpha_2)$ with $\alpha_i \in H(Y_i)$ such that
\begin{equation*}
\iota_1^*\alpha_1 = \iota_2^*\alpha_2 \quad\mbox{and}\quad
\phi_*\alpha_1 + p_*\alpha_2 = \alpha.
\end{equation*}
Such representatives are called {\it liftings}, which are not
unique.

The standard choice of lifting is
\begin{equation*}
\alpha_1 = \phi^*\alpha \quad \text{and} \quad \alpha_2 =
p^*(\alpha|_{Z}).
\end{equation*}
Other liftings can be obtained from the standard one by the
following way.

\begin{lemma}[\cite{LLW}]\label{switch}
Let $\alpha(0) = (\alpha_1, \alpha_2)$ be a choice of lifting. Then
\begin{equation*}
\alpha(0) = (\alpha_1 - \iota_{1*} e, \alpha_2 + \iota_{2*} e)
\end{equation*}
is also a lifting for any class $e$ in $E$ of the same dimension as
$\alpha$. Moreover, any two liftings are related in this manner.
\end{lemma}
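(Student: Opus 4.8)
\textbf{Proof plan for Lemma \ref{switch}.}

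The plan is to reduce the statement to the excess-intersection / self-intersection formula for the divisor $E$ inside the two components, and then to prove the ``moreover'' part by a dimension count on the kernel of the gluing map. First I would set up the two basic constraints that a lifting $\alpha(0) = (\alpha_1,\alpha_2)$ must satisfy: the matching condition $\iota_1^*\alpha_1 = \iota_2^*\alpha_2$ on the common divisor $E = Y_1 \cap Y_2$, and the degeneration condition $\phi_*\alpha_1 + p_*\alpha_2 = \alpha$. Given a class $e \in H(E)$ of the appropriate (co)dimension, I would check directly that the pair $(\alpha_1 - \iota_{1*}e,\ \alpha_2 + \iota_{2*}e)$ still satisfies both. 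For the matching condition this is the key computation: $\iota_1^*\iota_{1*}e = e \cup c_1(N_{E/Y_1})$ and $\iota_2^*\iota_{2*}e = e \cup c_1(N_{E/Y_2})$, and since $E$ is the $\phi$-exceptional divisor sitting at infinity, one has $N_{E/Y_1} \otimes N_{E/Y_2} \cong \mathscr{O}_E$ (the normal directions in the two components are mutually inverse along the node of the degenerate fiber $W_0 = Y_1 \cup Y_2$), hence $c_1(N_{E/Y_1}) + c_1(N_{E/Y_2}) = 0$. Therefore the change in $\iota_1^*\alpha_1$ is $-e\cup c_1(N_{E/Y_1})$ and the change in $\iota_2^*\alpha_2$ is $+e\cup c_1(N_{E/Y_2}) = -e\cup c_1(N_{E/Y_1})$, so the two sides change by the same amount and the matching condition is preserved.

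For the degeneration condition, I would use that $\phi \circ \iota_1 = p \circ \iota_2$ as maps $E \to X$ (both are the composition $E \to Z \hookrightarrow X$ on the relevant locus; more precisely $\phi_*\iota_{1*}e = p_*\iota_{2*}e$ since both equal the pushforward of $e$ to $X$ along $E \to Z \hookrightarrow X$). Then $\phi_*(\alpha_1 - \iota_{1*}e) + p_*(\alpha_2 + \iota_{2*}e) = \phi_*\alpha_1 + p_*\alpha_2 - \phi_*\iota_{1*}e + p_*\iota_{2*}e = \alpha$, so the second constraint is unaffected. This shows $(\alpha_1 - \iota_{1*}e, \alpha_2 + \iota_{2*}e)$ is again a lifting.

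For the ``moreover'' part — that every lifting arises this way — I would argue as follows. Suppose $(\alpha_1,\alpha_2)$ and $(\tilde\alpha_1,\tilde\alpha_2)$ are two liftings; set $\delta_i := \alpha_i - \tilde\alpha_i$. Then $\iota_1^*\delta_1 = \iota_2^*\delta_2$ and $\phi_*\delta_1 + p_*\delta_2 = 0$. Because $Y_1 = Y = \mathrm{Bl}_Z X$, the map $\phi^*: H(X) \to H(Y_1)$ is a split injection with complement supported on $E$ (the blow-up cohomology decomposition), so $\phi_*\delta_1 = 0$ forces $\delta_1 = \iota_{1*}e_1$ for some $e_1 \in H(E)$; similarly, since $Y_2 = P(N_{Z/X}\oplus\mathscr{O}) \to Z$ is a projective completion of the normal bundle with $p$ a projective bundle after restricting away from $E$, $p_*\delta_2 = -p_*\iota_{2*}e_1$ is determined, and one shows $\delta_2 + \iota_{2*}e_1$ lies in $\ker p_* \cap \ker \iota_2^*$ — here I would need the matching condition $\iota_1^*\delta_1 = \iota_2^*\delta_2$ together with the self-intersection identity above to conclude that this remaining class vanishes, forcing $\delta_2 = -\iota_{2*}e_1$. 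Taking $e = -e_1$ exhibits the relation. The main obstacle I expect is exactly this last step: carefully identifying the kernels of $\phi_*$ and $p_*$ and checking that the matching condition pins down $e_1$ on both components simultaneously (i.e. that the class $e_1$ extracted from the $Y_1$ side is the same as the one governing the $Y_2$ side). Since the lemma is quoted from \cite{LLW}, I would, if a clean self-contained argument proves delicate, simply cite the original proof there and record only the self-intersection computation $c_1(N_{E/Y_1}) + c_1(N_{E/Y_2}) = 0$ that makes the construction work.
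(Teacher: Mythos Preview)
The paper does not supply its own proof of this lemma; it simply records the statement with the citation to \cite{LLW}. So there is no paper proof to compare against, and your plan to cite \cite{LLW} if the self-contained argument becomes delicate is exactly what the paper does.

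Your argument for the first half (that $(\alpha_1 - \iota_{1*}e,\ \alpha_2 + \iota_{2*}e)$ is again a lifting) is correct and clean: the self-intersection identity $c_1(N_{E/Y_1}) + c_1(N_{E/Y_2}) = 0$ handles the matching condition, and $\phi\circ\iota_1 = p\circ\iota_2$ as maps $E \to X$ handles the pushforward condition.

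For the ``moreover'' part there is a genuine gap in what you wrote: from $\phi_*\delta_1 + p_*\delta_2 = 0$ you jump to ``$\phi_*\delta_1 = 0$ forces $\delta_1 = \iota_{1*}e_1$,'' but you have not established $\phi_*\delta_1 = 0$. What you actually know is only that $\phi_*\delta_1 = -p_*\delta_2$ lies in $i_*H(Z) \subset H(X)$, since $p$ factors through $Z$. One way to repair this is to first subtract a term of the form $\iota_{1*}(\bar\phi^*c)$ (with $c \in H(Z)$ chosen so that $\phi_*\iota_{1*}\bar\phi^*c = i_*c$ absorbs $\phi_*\delta_1$), reducing to the case $\phi_*\delta_1 = 0$; alternatively, run a dimension count: $\dim H(Y_1) + \dim H(Y_2) = \dim H(X) + 2\dim H(E)$ via the blow-up and projective-bundle formulas, so the affine space of liftings of a fixed $\alpha$ has dimension exactly $\dim H(E)$, and since $e \mapsto (-\iota_{1*}e, \iota_{2*}e)$ is injective (e.g.\ because $\iota_{2*}$ is injective on the projective bundle $Y_2$) it parametrizes all liftings. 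Either route closes the gap; absent that, your fallback of citing \cite{LLW} matches the paper's own treatment.
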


For an ordinary flop $f:X\dashrightarrow X'$, we compare the
degeneration expressions of $X$ and $X'$. For a given admissible
triple $\eta = (\Gamma_1, \Gamma_2, I_\rho)$ on the degeneration of
$X$, one may pick the corresponding $\eta' = (\Gamma_1', \Gamma_2',
I'_\rho)$ on the degeneration of $X'$ such that $\Gamma_1 =
\Gamma_1'$. Since
\begin{equation*}
\phi^*\alpha - \phi'^*\T \alpha \in \iota_{1*} H(E) \subset
H(Y),
\end{equation*}
Lemma~\ref{switch} implies that one can choose
$\alpha_1=\alpha'_1$. This procedure identifies relative invariants
on the $Y_1 = Y = Y'_1$ from both sides, and we are left with the
comparison of the corresponding relative invariants on $\tilde E$
and $\tilde E'$.

The ordinary flop $f$ induces an ordinary flop
\begin{equation*}
\tilde f : \tilde E \dashrightarrow \tilde E'
\end{equation*}
on the local model. Denote again by $\T$ the cohomology
correspondence induced by the graph closure. Then

\begin{lemma}[\cite{LLW}]\label{coh-red}
Let $f:X\dashrightarrow X'$ be an ordinary flop. Let $\alpha \in
H(X)$ with liftings $\alpha(0) = (\alpha_1, \alpha_2)$ and
$\T\alpha(0) = (\alpha'_1, \alpha'_2)$. Then
\begin{equation*}
\alpha_1 = \alpha'_1 \quad \Longleftrightarrow \quad \T\alpha_2 =
\alpha'_2.
\end{equation*}
\end{lemma}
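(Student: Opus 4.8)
The plan is to reduce, via Lemma~\ref{switch}, to comparing one pair of liftings with equal $Y_1$-components, and then to verify the needed compatibility from the explicit description of $\T$ on the local models. Starting from the standard liftings $(\phi^*\alpha,\,p^*(\alpha|_Z))$ of $\alpha$ and $(\phi'^*\T\alpha,\,p'^*(\T\alpha|_{Z'}))$ of $\T\alpha$, the comparison $\phi'^*\T\alpha=\phi^*\alpha+j_*\lambda$ proved above (with $\lambda\in H(E)$ the explicit $\tb_i\,\frac{x^j-(-y)^j}{x+y}$-combination) lets me apply Lemma~\ref{switch} on the $\alpha$-side to obtain the lifting $(\phi'^*\T\alpha,\,p^*(\alpha|_Z)-\iota_{2*}\lambda)$ of $\alpha$, whose $Y_1$-component now agrees with that of the standard lifting of $\T\alpha$. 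Two facts make this reduction legitimate: (a) since $Z\subset\tilde E$ and $Z'\subset\tilde E'$ are the zero-sections of the projective completions, $\tilde f$ is an isomorphism near the infinity divisor $E$ and restricts to the canonical identification $E\xrightarrow{\ \sim\ }E'$, so that $\T\circ\iota_{2*}=\iota_{2*}'$ and $\iota_2'^*\circ\T=\iota_2^*$ under $E=E'$; (b) by (a), a common $H(E)$-shift of both liftings preserves simultaneously ``$\alpha_1=\alpha_1'$'' and ``$\T\alpha_2=\alpha_2'$''. Hence everything comes down to the single identity
\begin{equation*}
\T\bigl(p^*(\alpha|_Z)-\iota_{2*}\lambda\bigr)=p'^*(\T\alpha|_{Z'})\qquad\text{in }H(\tilde E');
\end{equation*}
granting it, the second component of the $\alpha$-lifting maps under $\T$ onto the second component $p'^*(\T\alpha|_{Z'})$ of the standard lifting of $\T\alpha$, and the matching condition on $E$ is exactly the second identity in (a).

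To prove the displayed identity I would expand $\alpha|_Z=\sum_{i,j}(\alpha.\check\tb_i H_{r-j})\,\tb_i h^j$ in the Leray--Hirsch basis of $H(Z)$ (Lemma~\ref{dual-basis}), pull it back by $p$ to $\tilde E$, and apply $\T$ on $H(\tilde E)$ using $H(S)$-linearity, the rule $\T H_k=(-1)^{r-k}H_k'$ (Proposition~\ref{H}), and the parallel behaviour of $\T$ along the fibre direction encoded by the classes $\Theta_j$ (Lemma~\ref{defect-1}); then compare with the expansion of $p'^*(\T\alpha|_{Z'})$ on the $X'$-side, rewriting $(\T\alpha.\check\tb_i H'_{r-j})$ via $(\alpha.\check\tb_i H_{r-j})$ by invariance of the Poincar\'e pairing (Theorem~\ref{t:1.1}). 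The subtlety is that $\T$ sends $h^j\mapsto(-1)^{r-j}h'^j$ only for $j\le r$; once $j>r$ the Chern polynomial $f_F$ must be used to reduce, and $\T$ picks up lower-order corrections (Remark~\ref{caution}, Lemma~\ref{defect-2}). I expect these excess contributions to match, monomial by monomial, the term $-\iota_{2*}'\lambda$ after repeated use of $\bar\psi_*h^{r+m}=s_m$ and the Whitney identities for $F+F'^*$, in the same spirit as the proof of Proposition~\ref{recursive}.

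The main obstacle is precisely this bookkeeping: because $\T$ is not diagonal on the high powers of $h$ and of $\xi$, the correction class $\lambda$ has to be carried through the whole computation, and one must check that $-\T\iota_{2*}\lambda=-\iota_{2*}'\lambda$ cancels exactly the discrepancy between $\T p^*(\alpha|_Z)$ and $p'^*(\T\alpha|_{Z'})$; the needed cancellation ultimately rests on the Chern-class identities among $c(F)$, $c(F')$, $c(Q_F)$ and $c(Q_N)$ assembled in \S\ref{defect}. Finally, the converse implication $\T\alpha_2=\alpha_2'\Rightarrow\alpha_1=\alpha_1'$ is obtained by running the same argument for the inverse ordinary flop $\tilde f^{-1}:\tilde E'\dashrightarrow\tilde E$, using $T^t\circ T=\Delta_X$ from Theorem~\ref{t:1.1}.
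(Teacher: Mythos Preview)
The paper does not supply its own proof here; the lemma is quoted from \cite{LLW}. Your strategy---use Lemma~\ref{switch} to reduce to a single reference pair of liftings with matching $Y_1$-components, and then verify $\T\alpha_2=\alpha'_2$ for that pair---is the right shape, and your observation (a) that $\T\circ\iota_{2*}=\iota'_{2*}$ (because $\tilde f$ restricts to the identity near the infinity divisor $E$) is the essential structural input.

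There is, however, a genuine gap in step (b). The fact that a \emph{common} $H(E)$-shift preserves both conditions does not suffice: an arbitrary pair of liftings $(\alpha_1,\alpha_2)$ of $\alpha$ and $(\alpha'_1,\alpha'_2)$ of $\T\alpha$ arises from your reference pair by \emph{independent} shifts $e,e'\in H(E)$. Then $\alpha_1=\alpha'_1$ becomes $\iota_{1*}(e-e')=0$, while (using (a)) $\T\alpha_2=\alpha'_2$ becomes $\iota'_{2*}(e-e')=0$; so the equivalence you want is exactly $\ker\iota_{1*}=\ker\iota'_{2*}$ on $H(E)$. Now $\iota'_{2*}$ is injective because $\iota'_2{}^{\,*}:H(\tilde E')\to H(E)$ is surjective (restriction from a projective bundle to its infinity divisor), so you must in addition show that $\iota_{1*}=j_*$ is injective, i.e.\ that $j^*:H(Y)\to H(E)$ is surjective. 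By the blow-up description of $H(Y)$ one has $\operatorname{Im}j^*\supset\bar\phi^*(H(X)|_Z)+\bar\phi^*H(Z)\cdot\{\zeta,\dots,\zeta^r\}$ with $\zeta=x+y$, so surjectivity reduces to $i^*:H(X)\to H(Z)$ being onto; this is immediate for the local model (where $Z$ is a section of $p$) but requires an argument in general, and you have not addressed it.

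Finally, the displayed identity $\T\bigl(p^*(\alpha|_Z)-\iota_{2*}\lambda\bigr)=p'^*(\T\alpha|_{Z'})$ is the computational heart of the matter, and your treatment (``I expect these excess contributions to match\dots'') is only a sketch. Either carry out the Chern-class bookkeeping you outline, or---more efficiently---recast the claim as the commutativity of
\[
\begin{array}{ccc}
H(W_0) & \longrightarrow & H(W'_0)\\
\Phi_*\big\downarrow & & \big\downarrow\Phi'_*\\
H(X) & \xrightarrow{\ \T\ } & H(X')
\end{array}
\]
under $(\alpha_1,\alpha_2)\mapsto(\alpha_1,\T\alpha_2)$, which amounts to the single identity $\phi'_*\alpha_1+p'_*\T\alpha_2=\T(\phi_*\alpha_1+p_*\alpha_2)$ and can be checked without expanding $\lambda$.
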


Now we are in a position to apply the degeneration formula to
reduce the problem to relative invariants of local models.

Notice that $A_1(\tilde E) = \iota_{2*} A_1(E)$ since both are
projective bundles over $Z$. We then have
\begin{equation*}
\phi^*\beta = \beta_1 + \beta_2
\end{equation*}
by regarding $\beta_2$ as a class in $E \subset Y$ (c.f.\
\cite{LLW}).

Define the generating series for genus $g$ (connected) invariants
\begin{equation*}
\langle A \mid \s, \mu \rangle^{(\tilde{E}, E)}_g
 := \sum_{\beta_2 \in NE(\tilde E)}
 \frac{1}{|{\rm Aut}\,\mu|} \langle A \mid \s, \mu
 \rangle_{g,\beta_2}^{(\tilde E, E)}\,q^{\beta_2}.
\end{equation*}
and the similar one with possibly disconnected domain curves
\begin{equation*}
\langle A \mid \s, \mu \rangle^{\bullet (\tilde E, E)} :=
\sum_{\Gamma;\, \mu_\Gamma = \mu} \frac{1}{|{\rm Aut}\,\Gamma|}
\langle A \mid \s, \mu \rangle_{\Gamma}^{\bullet (\tilde E,
E)}\,q^{\beta^\Gamma}\,\kappa^{g^\Gamma - |\Gamma|}.
\end{equation*}

For connected invariants of genus $g$ we assign the $\kappa$-weight
$\kappa^{g - 1}$, while for disconnected ones we simply assign the
product weights.

\begin{proposition}\label{reduction}
To prove $\T \langle \alpha \rangle^X_g \cong \langle \T\alpha
\rangle^{X'}_g$ for all $\alpha$ up to genus $g \le g_0$, it is
enough to show that
\begin{equation*}
\T\langle A \mid \s, \mu \rangle^{(\tilde{E}, E)}_g \cong \langle
\T A \mid \s, \mu \rangle^{(\tilde{E}', E)}_g
\end{equation*}
for all $A, \s, \mu$ and $g \le g_0$.
\end{proposition}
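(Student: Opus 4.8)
The plan is to feed the degeneration formula
$$
\langle\alpha\rangle^X_{g,n,\beta}=\sum_I\sum_{\eta\in\Omega_\beta}C_\eta
\big\langle\alpha_1\,\big|\,\e_I,\mu\big\rangle^{\bullet(Y,E)}_{\Gamma_1}
\big\langle\alpha_2\,\big|\,\e^I,\mu\big\rangle^{\bullet(\tilde E,E)}_{\Gamma_2}
$$
into the Novikov-graded generating series and argue term by term that, under $\T$, every factor matches the corresponding factor for $X'$. First I would fix liftings: by the remark preceding the statement, since $\phi^*\alpha-\phi'^*\T\alpha\in\iota_{1*}H(E)$, Lemma~\ref{switch} lets us choose $\alpha_1=\alpha_1'$ on $Y=Y_1=Y_1'$, and then Lemma~\ref{coh-red} gives $\T\alpha_2=\alpha_2'$ on $\tilde E$. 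Because $Y$ and $E$ are literally identified for $X$ and $X'$, the relative invariants $\langle\alpha_1\mid\e_I,\mu\rangle^{\bullet(Y,E)}_{\Gamma_1}$ on the two sides are equal \emph{as functions of the curve class} $\beta_1\in NE(Y)$; the only subtlety is the Novikov bookkeeping, which I address below. So the whole comparison is pushed onto the $\tilde E$-factors, which is exactly what the proposed statement isolates.

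The key steps, in order. (i) Match admissible triples: given $\eta=(\Gamma_1,\Gamma_2,I_\rho)$ for $X$, take $\eta'=(\Gamma_1,\Gamma_2',I_\rho)$ for $X'$ with the same $\Gamma_1$, the same partition $\mu$, the same gluing data, and $\Gamma_2'$ obtained from $\Gamma_2$ by applying $\T$ to the degree labels; the combinatorial constants $C_\eta=m(\mu)/|\mathrm{Aut}\,\eta|$ then agree, as do the genus/marked-point splitting relations. (ii) Sum over the basis $\{\e_I\},\{\e^I\}$ of $H(E^\rho)$: since $E=E'$ this pairing is identical on both sides, so $\T$ simply permutes the summands, matching $\langle\alpha_1\mid\e_I,\mu\rangle$ with itself and $\langle\alpha_2\mid\e^I,\mu\rangle$ with $\langle\alpha_2'\mid\e^I,\mu\rangle$. (iii) Track the Novikov variable. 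Here one uses $\phi^*\beta=\beta_1+\beta_2$ with $\beta_2$ regarded as a class in $E\subset Y$ (as recalled just before the statement, using $A_1(\tilde E)=\iota_{2*}A_1(E)$). Under $\T$ one has $\beta\mapsto\T\beta$ on the $X$-side and $\beta_1$ is unchanged, so the needed identity $\T q^{\beta}=q^{\T\beta}$ forces $\beta_2\mapsto\T\beta_2$ in the $\tilde E$-factor; this is consistent because $\tilde f:\tilde E\dashrightarrow\tilde E'$ is itself an ordinary flop and $\T$ on $H(\tilde E)$ is the graph-closure correspondence for $\tilde f$. (iv) Assemble: grouping the degeneration sum according to $(\beta_1,\beta_2)$, the $\beta_1$-part is literally equal on both sides and the $\beta_2$-part is governed by the hypothesized $\tilde E$-comparison, so $\T\langle\alpha\rangle^X_g\cong\langle\T\alpha\rangle^{X'}_g$ after the analytic continuation $q^\ell\leftrightarrow q^{-\ell'}$ that is already built into the meaning of ``$\cong$''. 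One also needs the elementary remark that the $\kappa$-weights $\kappa^{g^\Gamma-|\Gamma|}$ and the degeneration expansion are compatible with extracting a fixed-genus piece $g\le g_0$, which is immediate from the genus splitting relation.

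The main obstacle is step (iii): making the correspondence between the Novikov gradings truly \emph{canonical and compatible}, i.e.\ checking that the identification $\beta_2\in NE(\tilde E)$ via $A_1(\tilde E)=\iota_{2*}A_1(E)$ intertwines the two copies of $\T$ (the one on $H(X)$ versus the one on $H(\tilde E)$) and that the flopped-curve class $\ell$ is sent to $-\ell'$ consistently on the local model, so that the analytic continuation performed globally agrees with the one performed fiber-/local-model-wise. This is essentially the assertion that $\T$ commutes with the degeneration, encoded in the compatibility square for $\bar p_*$ stated in the introduction; everything else (combinatorics of $\Omega_\beta$, the basis sum over $H(E^\rho)$, the equality of the $Y$-factors) is bookkeeping that follows formally once the liftings are chosen via Lemmas~\ref{switch} and~\ref{coh-red}. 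I would therefore spend the bulk of the write-up on a careful proof that, degree class by degree class, the $X$- and $X'$-degeneration sums are term-wise identified, leaving the genuinely GW-theoretic content to the (still open at this point) local statement in the proposition.
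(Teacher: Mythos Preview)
Your proposal is correct and follows the same strategy as the paper: degenerate, choose compatible liftings via Lemmas~\ref{switch} and~\ref{coh-red}, identify the $(Y,E)$-side, and push the comparison onto $(\tilde E,E)$. The paper organizes the combinatorics a bit differently: it first passes to invariants with possibly disconnected domain, obtaining the clean product formula
\[
\langle\alpha\rangle^{\bullet X}=\sum_\mu m(\mu)\sum_I\langle\alpha_1\mid\e_I,\mu\rangle^{\bullet(Y,E)}\langle\alpha_2\mid\e^I,\mu\rangle^{\bullet(\tilde E,E)}\kappa^\rho,
\]
and then explicitly decomposes the disconnected $(\tilde E,E)$ series into products of connected pieces indexed by partitions $P\in P(\mu)$. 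This last step is the one you label ``bookkeeping'' but do not spell out, and it is precisely where the \emph{connected} hypothesis $\T\langle A\mid\s,\mu\rangle^{(\tilde E,E)}_g\cong\langle\T A\mid\s,\mu\rangle^{(\tilde E',E)}_g$ actually enters (one applies it to each connected component of $\Gamma_2$, hence needs it for all $g'\le g_0$ simultaneously; the $\kappa$-bookkeeping is then reduction modulo $\kappa^{g_0}$). As for the Novikov compatibility you flag as the main obstacle in step~(iii), the paper handles it in a remark immediately after the proof: the contact-weight discrepancy on the $Y$-side is $q^{|\mu|\ell'}$ and on the $\tilde E$-side is $q^{-|\mu|\ell'}$, and these cancel, so your instinct that this is the point requiring care is right, but the resolution is short.
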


\begin{proof} For the $n$-point function
\begin{equation*}
\langle \alpha \rangle^X = \sum_g \langle \alpha \rangle^X_g
\,\kappa^{g - 1} = \sum_{g;\,\beta \in NE(X)} \langle \alpha
\rangle^X_{g,\beta}\, q^\beta\,\kappa^{g - 1},
\end{equation*}
the degeneration formula gives
\begin{align*}
\langle \alpha \rangle^X &= \sum_{g;\beta \in NE(X)}\sum_{\eta \in
\Omega_\beta} \sum_I C_\eta \langle \alpha_1 \mid \e_I, \mu
\rangle_{\Gamma_1}^{\bullet (Y_1,E)} \langle \alpha_2 \mid \e^I,
\mu \rangle_{\Gamma_2}^{\bullet (Y_2,E)}\,q^{\phi^*\beta}\,
\kappa^{g - 1}\\
&= \sum_{\mu} \sum_I \sum_{\eta \in \Omega_\mu} C_\eta \times \\
&\quad \left(\langle \alpha_1 \mid \e_I, \mu
\rangle_{\Gamma_1}^{\bullet
(Y_1,E)}\,q^{\beta_1}\,\kappa^{g^{\Gamma_1} - |\Gamma_1|}\right)
\left(\langle \alpha_2 \mid \e^I, \mu \rangle_{\Gamma_2}^{\bullet
(Y_2,E)}\,q^{\beta_2}\,\kappa^{g^{\Gamma_2} - |\Gamma_2|}\right)
\kappa^\rho.
\end{align*}
(Notice that $\rho$ is determined by $\mu$.) In this formula, the
variable $q^{\beta_1}$ on $Y_1$ (resp.~$q^{\beta_2}$ on $Y_2$) is
identified with $q^{\phi_* \beta_1}$ (resp.~$q^{p_* \beta_2}$) on
$X$.

To simplify the generating series, we consider also absolute
invariants $\langle \alpha \rangle^{\bullet X}$ with possibly
disconnected domain curves as in the relative case (with product
weights in $\kappa$). Then by comparing the order of automorphisms,
\begin{equation*}
\langle \alpha \rangle^{\bullet X} = \sum_{\mu} m(\mu) \sum_I
\langle \alpha_1 \mid \e_I, \mu \rangle^{\bullet (Y_1,E)} \langle
\alpha_2 \mid \e^I, \mu \rangle^{\bullet (Y_2,E)}\,\kappa^\rho.
\end{equation*}

To compare $\T \langle \alpha \rangle^{\bullet X}$ and $\langle \T
\alpha \rangle^{\bullet X'}$, by Lemma~\ref{coh-red} we may assume
that $\alpha_1 = \alpha_1'$ and $\alpha_2' = \T\alpha_2$. This
choice of cohomology liftings identifies the relative invariants of
$(Y_1, E)$ and those of $(Y'_1, E)$ with the same topological
types. It remains to compare (c.f.~Remark \ref{contact-weight}
below)
\begin{equation*}
\langle \alpha_2 \mid \e^I, \mu \rangle^{\bullet (\tilde E, E)}
\quad \mbox{and} \quad \langle \T\alpha_2 \mid \e^I, \mu
\rangle^{\bullet (\tilde E', E)}.
\end{equation*}

We further split the sum into connected invariants. Let $\Gamma^\n$
be a connected part with the contact order $\mu^\n$ induced from
$\mu$. Denote $P: \mu = \sum_{\n \in P} \mu^\n$ a partition of
$\mu$ and $P(\mu)$ the set of all such partitions. Then
\begin{equation*}
\langle A \mid \s, \mu \rangle^{\bullet (\tilde E, E)} = \sum_{P
\in P(\mu)} \prod_{\n \in P} \sum_{\Gamma^\n} \frac{1}{|{\rm
Aut}\,\mu^\n|} \langle A^\n \mid \s^\n, \mu^\n
\rangle_{\Gamma^\n}^{(\tilde E, E)}\,q^{\beta^{\Gamma^\n}}\,
\kappa^{g^{\Gamma^\pi} - 1}.
\end{equation*}

In the summation over $\Gamma^\pi$, the only index to be summed
over is $\beta^{\Gamma^\n}$ on $\tilde{E}$ and the genus. This
reduces the problem to $\langle A^\n \mid \s^\n, \mu^\n
\rangle^{(\tilde{E}, E)}_g$.

Instead of working with all genera, the proposition follows from
the same argument by reduction modulo $\kappa^{g_0}$.
\end{proof}

\begin{remark} \label{contact-weight}
Notice that there is natural compatibility on our identifications
of the curve classes which keeps track on the contact weight
$|\mu|$. Namely, the identity $\langle \alpha_1 \mid \e_I, \mu
\rangle^{\bullet (Y_1,E)} = \langle \alpha_1 \mid \e_I, \mu
\rangle^{\bullet (Y_1',E)}$ leads to
\begin{equation*}
\T \phi_* \langle \alpha_1 \mid \e_I, \mu \rangle^{\bullet
(Y_1,E)} = q^{|\mu| \ell'} \phi'_* \langle \alpha_1 \mid \e_I, \mu
\rangle^{\bullet (Y_1',E)},
\end{equation*}
while $\T \langle \alpha_2 \mid \e^I, \mu \rangle^{\bullet (\tilde
E, E)} \cong \langle \T\alpha_2 \mid \e^I, \mu \rangle^{\bullet
(\tilde E', E)}$ leads to
\begin{equation*}
\T p_* \langle \alpha_2 \mid \e^I, \mu \rangle^{\bullet (\tilde E,
E)} \cong q^{-|\mu|\ell'} p'_* \langle \T\alpha_2 \mid \e^I, \mu
\rangle^{\bullet (\tilde E', E)}.
\end{equation*}
Thus we may ignore the issue of contact weights in our discussion.
\end{remark}

\subsection{Relative local back to absolute local} \label{ReToAbs}
Now let $X=\tilde{E}$. We shall further reduce the relative cases
to the absolute cases with at most descendent insertions along $E$.
This has been done in \cite{LLW} for genus zero invariants under
simple flops. Here we extend the argument to ordinary flops over
any smooth base $S$ and to all genera.

The local model
\begin{equation*}
\bar p:= \bar\psi \circ p: \tilde E \mathop{\to}^p Z
\mathop{\to}^{\bar \psi} S
\end{equation*}
as well as the flop $f: \tilde E \dasharrow \tilde E'$ are all over
$S$, with each fiber isomorphic to the simple case. Thus the map on
numerical one cycles
\begin{equation*}
\bar p_*: N_1(\tilde E) \to N_1(S)
\end{equation*}
has kernel spanned by the $p$-fiber line class $\gamma$ and $\bar
\psi$-fiber line class $\ell$, which is the flopping log-extremal
ray.

Notice that for general $S$ the structure of $NE(Z)$ could be
complicated and $NE(\tilde E)$ is in general larger than $i_* NE(Z)
\oplus \mathbb{Z}^+\gamma$. For $\beta = \beta_Z + d_2(\beta)
\gamma \in NE(\tilde E)$, while $\beta_Z = p_*\beta$ is necessarily
effective, $d_2(\beta)$ could possibly be negative if (and only if)
$\beta_Z \ne 0$. Nevertheless we have the following:

\begin{lemma}
The correspondence $\T$ is compatible with $N_1(S)$. Namely
\begin{equation*}
\xymatrix{ N_1(\tilde E) \ar[rr]^\T \ar[rd]_{\bar p_* \oplus d_2}
& &
N_1(\tilde E') \ar[ld]^{\bar p'_* \oplus d_2'} \\
& N_1(S) \oplus \mathbb{Z} &}
\end{equation*}
is commutative.
\end{lemma}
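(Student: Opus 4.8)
\emph{Proof plan.}
The approach is to compute $\T$ on numerical one-cycles through the common resolution of the local flop and then verify the two components of $\bar p_*\oplus d_2$ separately. Recall that $\tilde f:\tilde E\dashrightarrow\tilde E'$ is again an ordinary flop, resolved by $\phi:\widetilde Y\to\tilde E$ and $\phi':\widetilde Y\to\tilde E'$ with $\widetilde Y=\mathrm{Bl}_Z\tilde E=\mathrm{Bl}_{Z'}\tilde E'$, and that Theorem~\ref{t:1.1} applied to $\tilde f$ gives $\T\beta=\phi'_*\phi^*\beta$ for $\beta\in N_1(\tilde E)$ (all varieties are smooth, so $\phi^*$ is the ordinary pullback on Chow groups). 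I would also use the identifications $d_2(\beta)=\xi.\beta$ and $d_2'(\beta)=\xi'.\beta$, where $\xi=c_1(\mathscr{O}_{\tilde E}(1))=[E]$ and $\xi'=c_1(\mathscr{O}_{\tilde E'}(1))=[E']$ are the infinity divisors: indeed $\xi.\gamma=1$, while $i^*\xi=0$ (Lemma~\ref{defect-1}) forces $\xi.(i_*\delta)=0$ for every $\delta\in N_1(Z)$, and $N_1(\tilde E)=i_*N_1(Z)\oplus\mathbb{Z}\gamma$ by the projective bundle structure of $p$; similarly on $\tilde E'$.

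For the $N_1(S)$-component: the centers $Z$, $Z'$, the flopping contraction, and hence the whole resolution diagram lie over $S$, so $\bar p\circ\phi=\bar p'\circ\phi'=:q:\widetilde Y\to S$. Then, by the projection formula and $\phi_*\phi^*=\mathrm{id}$ (as $\phi$ is birational of degree one),
\[
\bar p'_*\,\T\beta=\bar p'_*\phi'_*\phi^*\beta=q_*\phi^*\beta=\bar p_*\phi_*\phi^*\beta=\bar p_*\beta .
\]

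For the $\mathbb{Z}$-component it is enough to prove $\phi^*\xi=\phi'^*\xi'$ as classes on $\widetilde Y$; granting this, the same manipulation gives
\[
d_2'(\T\beta)=\xi'.\,\phi'_*\phi^*\beta=\phi'^*\xi'.\,\phi^*\beta=\phi^*\xi.\,\phi^*\beta=\xi.\,\phi_*\phi^*\beta=\xi.\beta=d_2(\beta).
\]
Since $Z\cap E=\emptyset=Z'\cap E'$, the blow-ups contribute no exceptional term to $[E]$, $[E']$, so $\phi^*\xi$ and $\phi'^*\xi'$ are the classes of the strict transforms of $E$ and $E'$; thus $\phi^*\xi=\phi'^*\xi'$ is equivalent to the geometric assertion that the local flop carries the infinity divisor to the infinity divisor, i.e.\ $\tilde f(E)=E'$ inside the isomorphism locus $\tilde E\setminus Z\cong\tilde E'\setminus Z'$.

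This last identity is the only input requiring the explicit geometry, and is the step I would treat most carefully — although it is essentially forced by the construction. I would check it fibrewise over $S$: each $\bar p$-fibre of $\tilde E$ is the local model of a simple flop, on which $E$ restricts to the hyperplane at infinity and $\tilde f(E)=E'$ is already recorded in \cite{LLW} (or verified directly, the simple model being toric); the identity then globalizes since $E$ and $E'$ are relative hyperplane sections for $p$ and $p'$. Equivalently, $\tilde f$ is equivariant for the fibrewise $\mathbb{G}_m$-action scaling the $\mathscr{O}$-summand of $N_{Z/X}\oplus\mathscr{O}$, whose divisorial fixed locus is $E$ on the $\tilde E$-side and $E'$ on the $\tilde E'$-side. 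Everything else is formal bookkeeping with proper pushforward and pullback.
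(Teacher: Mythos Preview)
Your proof is correct, but it follows a genuinely different route from the paper's. The paper argues by direct computation on generators: using the decomposition $N_1(\tilde E)=i_*N_1(Z)\oplus\mathbb{Z}\gamma$, it verifies the commutativity separately on $\gamma$ (via the explicit formula $\T\gamma=\gamma'+\ell'$), on $\ell$ (via $\T\ell=-\ell'$), and on the canonical lifts $\bar\psi^*\beta_S.H_r$ (via Proposition~\ref{H}, which gives $\T H_r=H'_r$). Your argument instead works intrinsically through the common resolution $\widetilde Y$, using only that the whole flop diagram lives over $S$ (giving the $N_1(S)$-component at once) and that $\phi^*\xi=\phi'^*\xi'$ because the infinity divisor lies in the isomorphism locus (giving the $d_2$-component via the projection formula). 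Your approach is more conceptual and avoids any explicit cohomology formulas; the paper's approach, while more hands-on, has the virtue of reusing the generator-level identities (for $\gamma$, $\ell$, and $H_r$) that are needed repeatedly elsewhere in the paper anyway.
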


\begin{proof}
Since $N_1(\tilde E) = i_* N_1(Z) \oplus \mathbb{Z}\gamma$ and $\T
\gamma = \gamma' + \ell'$, we see that $d_2 = d_2' \circ \T$ and it
is enough to consider $\beta \in N_1(Z)$. Also $\T \ell = -\ell'$,
so the remaining cases are of the form $\beta =
\bar\psi^*\beta_S.H_r$ for $\beta_S \in N_1(S)$. Then $\T \beta =
\bar\psi'^*\beta_S.H'_r$ and it is clear that both $\beta$ and $\T
\beta$ project to $\beta_S$.
\end{proof}

This leads to the following key observation, which applies to both
absolute and relative invariants:

\begin{proposition}
Functional equation of a generating series $\langle A \rangle$ over
Mori cone on local models $f: \tilde E \dasharrow \tilde E'$ is
equivalent to functional equations of its various subseries (fiber
series) $\langle A \rangle_{\beta_S, d_2}$ labeled by $NE(S)
\oplus \mathbb{Z}$. The fiber series is a sum over the affine ray
$\beta \in (d_2 \gamma + \bar\psi^*\beta_S.H_r + \mathbb{Z} \ell)
\cap NE(\tilde E)$.
\end{proposition}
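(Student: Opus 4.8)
The plan is to exploit the compatibility of $\T$ with the projection $\bar p_* \oplus d_2 : N_1(\tilde E) \to N_1(S) \oplus \mathbb{Z}$ just established, together with the observation that the analytic continuation entering the functional equation $\T\langle A\rangle^{\tilde E} \cong \langle \T A\rangle^{\tilde E'}$ is carried out solely in the one-dimensional direction spanned by $\ell$. First I would fix a decomposition of the Novikov monomials: write an effective class as $\beta = \bar\psi^*\beta_S.H_r + d_2\gamma + d\ell$, which is legitimate since $\ker \bar p_*$ is spanned by $\gamma$ and $\ell$, so that all lifts of a prescribed pair $(\beta_S, d_2)$ to $N_1(\tilde E)$ differ by an integral multiple of $\ell$. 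Correspondingly $q^\beta = q^{\beta_S}(q^\gamma)^{d_2}(q^\ell)^d$ and the generating series splits into its bigraded pieces $\langle A\rangle^{\tilde E} = \sum_{\beta_S \in NE(S),\; d_2 \in \mathbb{Z}} \langle A\rangle^{\tilde E}_{\beta_S, d_2}$. Each fiber series $\langle A\rangle^{\tilde E}_{\beta_S, d_2}$ runs over those $d$ with $\bar\psi^*\beta_S.H_r + d_2\gamma + d\ell \in NE(\tilde E)$; since $\ell$ is effective and $NE(\tilde E)$ is a cone, this set of $d$ is bounded below, so the fiber series is $q^{\beta_S}(q^\gamma)^{d_2}$ times a Laurent series in $q^\ell$ bounded below.

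Next I would track $\T$. By the compatibility lemma (and its proof, which gives $\T(\bar\psi^*\beta_S.H_r) = \bar\psi'^*\beta_S.H'_r$, $\T\gamma = \gamma' + \ell'$, $\T\ell = -\ell'$) one computes $\T(\bar\psi^*\beta_S.H_r + d_2\gamma + d\ell) = \bar\psi'^*\beta_S.H'_r + d_2\gamma' + (d_2 - d)\ell'$. Thus $\T$ carries the affine ray labelling the $(\beta_S, d_2)$-fiber on $\tilde E$ onto the affine ray labelling the $(\beta_S, d_2)$-fiber on $\tilde E'$, via the involution $d \mapsto d_2 - d$ exchanging the $\ell$- and $\ell'$-coordinates; in particular $\T$ preserves the $(\beta_S, d_2)$-bigrading, so $\T\langle A\rangle^{\tilde E} = \sum_{\beta_S, d_2} \T\langle A\rangle^{\tilde E}_{\beta_S, d_2}$ with $\T\langle A\rangle^{\tilde E}_{\beta_S, d_2}$ a Laurent series in $q^{\ell'}$ bounded \emph{above}. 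On the other side $\langle\T A\rangle^{\tilde E'} = \sum_{\beta_S, d_2}\langle\T A\rangle^{\tilde E'}_{\beta_S, d_2}$, the $(\beta_S, d_2)$-piece being a Laurent series in $q^{\ell'}$ bounded \emph{below}. Since ``$\cong$'' means equality after analytic continuation in the single variable $q^\ell = (q^{\ell'})^{-1}$ --- that is, equality, coefficient of $q^{\beta_S}(q^\gamma)^{d_2}$ by coefficient, of the resulting rational functions of $q^\ell$ --- the global functional equation is equivalent to the collection of identities $\T\langle A\rangle^{\tilde E}_{\beta_S, d_2} \cong \langle\T A\rangle^{\tilde E'}_{\beta_S, d_2}$, one per $(\beta_S, d_2)$, which is the asserted fiber-series statement.

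In this scheme the ``$\Leftarrow$'' direction is immediate by summing over $(\beta_S, d_2)$, and ``$\Rightarrow$'' is the extraction of the coefficient of each $q^{\beta_S}(q^\gamma)^{d_2}$. The one delicate point --- which I would regard as the main, if mild, obstacle --- is to justify rigorously that analytic continuation commutes with this bigrading, so that a continued multivariable identity may indeed be read off one bigraded slice at a time; this is exactly the place where one invokes that the fibers of $NE(\tilde E) \to NE(S) \oplus \mathbb{Z}$ are one-dimensional (only the $\ell$-direction) and that the $q^\ell$-support within each fiber is bounded below, so that the continuation reduces to a genuine one-variable operation on each slice. The remainder is bookkeeping with the splitting relations for curve classes already recorded above; note in particular that the argument is insensitive to descendent insertions and to the genus-tracking variable $\kappa$, since these play no role in the curve-class bigrading.
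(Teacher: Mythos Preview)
Your proof is correct and follows exactly the line of reasoning the paper intends. In fact the paper offers no separate proof of this proposition at all: it is stated as a ``key observation'' that follows directly from the preceding compatibility lemma $\bar p'_* \oplus d'_2 \circ \T = \bar p_* \oplus d_2$, and your write-up simply spells out the bookkeeping (the bigraded decomposition, the explicit action of $\T$ on the affine rays via $d \mapsto d_2 - d$, and the one-variable nature of the analytic continuation) that the paper leaves to the reader.
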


To analyze these fiber series $\langle A \rangle_{\beta_S, d_2}$
with $(\beta_S, d_2) \in NE(S) \oplus \mathbb{Z}$, we consider
the partial order of effectivity (weight) of the quotient Mori cone
\begin{equation*}
W := NE(\tilde E)/\sim, \quad \mbox{$a \sim b$ if and only if
$a - b \in \mathbb{Z} \ell$}.
\end{equation*}
Notice that $a > b$ and $b > a$ lead to $a \sim b$ since $\ell$ is
an extremal ray. Under the natural identification, $W$ can be regarded as a subset of $NE(S) \oplus \mathbb{Z}$. This is \emph{not} the
lexicographical (partial) order on $NE(S) \oplus \mathbb{Z}$, though both notions are all used in our discussions. For ease of notations we also use
\begin{equation*}
[\beta] \equiv (\beta_S, d_2) := (\bar p_*(\beta), d_2(\beta)) \in
W
\end{equation*}
to denote the class of $\beta$ modulo extremal rays.

Given insertions
$$A = (a_1, \ldots, a_n) \in H(\tilde E)^{\oplus n}$$
and weighted partition
$$(\s, \mu) = \{(\s_1, \mu_1), \ldots, (\s_\rho, \mu_\rho)\},$$
the genus $g$ relative invariant
$\langle A \mid \s, \mu \rangle_g$ is summing over classes $\beta =
\beta_Z + d_2 \gamma \in NE(\tilde E)$ with
\begin{equation*}
\sum_{j = 1}^n \deg a_j + \sum_{j = 1}^{\rho} \deg \s_j =
(c_1(\tilde E).\beta) + (\dim \tilde E - 3)(1 - g) + n + \rho -
|\mu|.
\end{equation*}
In this case, $d_2 = (E.\beta) = |\mu|$ is already fixed and
non-negative.

\begin{proposition} \label{p:4.8}
For an ordinary flop $\tilde E \dashrightarrow \tilde E'$, to prove
\begin{equation*}
\T\langle A \mid \s, \mu \rangle_{g, \beta_S} \cong \langle \T A
\mid \s, \mu \rangle_{g, \beta_S}
\end{equation*}
for any $A \in H(\tilde E)^{\oplus n}$, $\beta_S \in NE(S)$ and $(\s, \mu)$ up to genus $g \le
g_0$, it is enough to prove the $\T$-invariance for descendent invariants of $f$-special type. Namely,
\begin{equation*}
\T\langle A, \tau_{k_1} \s_1, \cdots, \tau_{k_\rho} \s_\rho
\rangle^{\tilde{E}}_{g, \beta_S, d_2} \cong \langle \T A,
\tau_{k_1} \s_1, \cdots, \tau_{k_\rho} \s_\rho
\rangle^{\tilde{E}'}_{g, \beta_S, d_2}
\end{equation*}
for any $A \in H(\tilde E)^{\oplus n}$, $k_j \in \mathbb{N}\cup
\{0\}$, $\s_j \in H(E)$ and $\beta_S \in NE(S)$, $d_2 \ge 0$ up
to genus $g \le g_0$.
\end{proposition}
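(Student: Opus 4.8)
The plan is to carry out the classical reduction of relative invariants to absolute descendent invariants (the \emph{rubber calculus} of Maulik--Pandharipande, done in genus zero for simple flops in \cite{LLW}), now for ordinary flops over $S$ and in all genera. The geometric device is the \emph{deformation to the normal cone of $E$ inside $\tilde E$}: since $E$ is a divisor, the central fiber is $\tilde E \cup_E \mathbb{P}$, where $\mathbb{P} = P(N_{E/\tilde E}\oplus\mathscr{O}) \to E$ is a $P^1$-bundle with two copies $E_0,E_\infty$ of $E$ as its zero and infinity sections, $E_0$ being glued to $\tilde E$ along $E$. Applying the degeneration formula to an \emph{absolute} descendent invariant of $\tilde E$ all of whose descendent insertions $\tau_{k_j}\s_j$ are supported on $E$, one lifts the $\tau_{k_j}\s_j$ onto the $\mathbb{P}$-component; what then appears on the $\tilde E$-component is precisely a relative invariant $\langle A \mid \s,\mu\rangle^{(\tilde E,E)}$, glued along $E$ against a \emph{rubber} invariant of $(\mathbb{P}, E_0\sqcup E_\infty)$ --- pure $E$-geometry, i.e.\ Hodge and $\psi$-integrals over moduli of rubber maps into the line bundle $N_{E/\tilde E}$, with the $\psi$-classes produced from the $\tau_{k_j}$ at the contact points.

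First I would organize this into a \emph{triangular system}. Grouping the degeneration contributions by the length $\rho$ of the partition $\mu$ (and refining by $|\mu|$), the formula reads, schematically,
\[
\langle A,\,\tau_{k_1}\s_1,\ldots,\tau_{k_\rho}\s_\rho\rangle^{\tilde E}_{g,\beta_S,d_2}
 = \langle A \mid \s,\mu\rangle^{(\tilde E,E)}_{g,\beta_S,d_2} + \big(\text{relative terms with fewer contact points}\big)\cdot\big(\text{rubber factors over }E\big),
\]
the leading term being the relative invariant in which the descendent orders $k_j$ have been turned into contact orders $\mu_j$. This system is invertible over the ring generated by the rubber factors, so each $\langle A\mid\s,\mu\rangle^{(\tilde E,E)}$ becomes a universal polynomial in $f$-special absolute invariants and in invariants of $E$-geometry alone. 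The induction is on $\rho$: for $\rho=0$ a relative invariant is literally an absolute invariant carrying only the $A$-insertions, hence of $f$-special type, and there is nothing to prove; for $\rho\ge 1$ the displayed identity expresses the relative invariant through the (by hypothesis $\T$-invariant) $f$-special leading term and relative invariants with strictly fewer contact points, handled by the inductive hypothesis. The bookkeeping of the preceding fiber-series discussion is respected throughout: the new curve classes are fiber classes of $\mathbb{P}\to E$ together with multiples of $\ell$, so the $(\beta_S,d_2)$-grading is untouched, and, exactly as in Proposition~\ref{reduction}, one argues modulo $\kappa^{g_0+1}$ so that only genus $\le g_0$ rubber data ever intervene.

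The real content is the \emph{$\T$-equivariance} of this scheme. The divisor $E=E'$ is the common $\phi$- and $\phi'$-exceptional divisor, it sits in the isomorphism locus of $f$, and the insertions $\s_j$ and the orders $\tau_{k_j}$ are simply carried along, with $\T$ acting naturally on $H(E)=H(E')$. What must be verified is that the rubber factors on the two sides correspond: they depend only on $N_{E/\tilde E}$, respectively $N_{E'/\tilde E'}$, which are different line bundles on the same space $E$ but are tied together by the relation coming from $N_{E/Y}=\bar\phi^*\mathscr{O}_{P(F)}(-1)\otimes\bar\phi'^*\mathscr{O}_{P(F')}(-1)$. Feeding that relation into the universal rubber formulas shows that the two families of rubber contributions are intertwined by $\T$ after the same analytic continuation in $q^\ell$ already present in the comparison of relative invariants on $(\tilde E,E)$ and $(\tilde E',E)$. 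Since the triangular system has $\T$-invariant coefficients and is invertible, $\T$-invariance of the $f$-special absolute family is then equivalent to $\T$-invariance of the relative family.

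The hard part is exactly this last verification: making the relative-to-absolute transformation explicit enough to check its compatibility with the flop correspondence over a nontrivial base $S$. In genus zero one can sidestep the general rubber machinery and use Gathmann's $\psi$-class recursion for $(\tilde E,E)$, reducing the $\T$-equivariance to a finite computation; in higher genus one must go through the Maulik--Pandharipande rigidification/rubber formulas and track the dependence on $N_{E/\tilde E}$ through them. A secondary bookkeeping difficulty is arranging the double induction on $(\rho,g)$ so that the rubber corrections --- which can simultaneously lower the number of contact points and redistribute genus --- always refer back to strictly earlier stages.
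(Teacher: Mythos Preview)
Your overall strategy --- degenerate $\tilde E$ so that the central fiber is $\tilde E\cup_E(\text{$P^1$-bundle over }E)$, lift the descendent insertions $\tau_{k_j}\s_j$ to the $P^1$-bundle side, and isolate $\langle A\mid\s,\mu\rangle^{(\tilde E,E)}$ as the leading term of a triangular system --- is exactly the paper's approach. (Your degeneration along $E$ and the paper's degeneration along $Z$ produce the same central fiber: $\mathrm{Bl}_Z\tilde E\cong P(\mathscr{O}_E(-1,-1)\oplus\mathscr{O})\cong P(N_{E/\tilde E}\oplus\mathscr{O})$.) But two points in your write-up are off.

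\textbf{The $\rho$-induction runs the wrong way.} You claim the correction terms are relative invariants with \emph{fewer} contact points and induct upward from $\rho=0$. The actual dimension count goes the other direction. For the main terms (those with the same $(g,\beta_S,|\mu|,n)$ on the $(\tilde E,E)$ side), the $Y_1$ invariants are genus-zero fiber-class integrals indexed by the $\rho'$ contact points; the constraint $\deg\e_{I^k}\le\deg\e_{i'_k}$ (coming from the fact that fiber-class invariants on a $P^1$-bundle over $E$ pair classes on $E$) forces $\deg\e_I\le\deg\e_{I'}$ and hence $\rho\le\rho'$. So the corrections have \emph{more} contact points, and one must run a \emph{downward} induction on $\rho$, bounded above by $|\mu|=d_2$. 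The paper organizes this as a lexicographic induction on the $5$-tuple $(g,\beta_S,|\mu|,n,\rho)$ with $\rho$ in \emph{reverse} order; your ``double induction on $(\rho,g)$'' is too coarse and, as stated, oriented incorrectly.

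\textbf{Your ``hard part'' is a non-issue.} You assert that $N_{E/\tilde E}$ and $N_{E'/\tilde E'}$ are different line bundles on $E$ and that matching the rubber contributions requires feeding the relation for $N_{E/Y}$ into universal rubber formulas. In fact $N_{E/\tilde E}=\mathscr{O}_{P(N)}(1)=\mathscr{O}_E(1,1)$ (dual to $N_{E/Y}=\mathscr{O}_E(-1,-1)$), which is manifestly symmetric in the two rulings of $E=P(F)\times_S P(F')$; the same computation on the $X'$ side gives $N_{E'/\tilde E'}=\mathscr{O}_E(1,1)$. So the two $P^1$-bundles are literally the same space, and the $(Y_1,E)$ invariants are \emph{constants} under $\T$ --- no rubber comparison or analytic continuation is needed there. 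This is what makes the argument clean: once the triangular system is set up with the correct (reverse) $\rho$-order, $\T$-invariance of the relative side reduces immediately to $\T$-invariance of the $f$-special absolute side.
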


\begin{proof}
The proof proceeds inductively on the 5-tuple
\begin{equation*}
(g, \beta_S, |\mu| = d_2, n, \rho)
\end{equation*}
in the lexicographical order, with $\rho$ in the reverse order.

Given $\langle a_1, \cdots, a_n \mid \s, \mu\rangle_{g,
\beta_S}$, since $\rho \le |\mu|$, there are only finitely many
5-tuples of lower order. The proposition holds for those cases by
the induction hypothesis.

We apply degeneration to the normal cone for $Z \hookrightarrow
\tilde E$ to get $W \to \mathbb{A}^1$. Then $W_0 = Y_1 \cup Y_2$
with $\pi: Y_1 \cong P(\mathscr{O}_E(-1, -1)\oplus \mathscr{O}) \to
E$ a $P^1$ bundle and $Y_2 \cong \tilde E$. Denote by $E_0 = E =
Y_1 \cap Y_2$ and $E_\infty \cong E$ the zero and infinity divisors
of $Y_1$ respectively.

The idea is to analyze the degeneration formula for
\begin{equation*}
\langle a_1, \cdots, a_n, \tau_{\mu_1 - 1}\s_1, \cdots,
\tau_{\mu_\rho - 1} \s_\rho \rangle^{\tilde E}_{g, \beta_S, d_2}
\end{equation*}
since formally it sums over the same curve classes $\beta$ as
those in $\langle a_1, \cdots, a_n \mid \s, \mu\rangle_{g, \beta_S}$
such that
\begin{align*}
 &\sum_{j = 1}^n \deg a_j + |\mu| - \rho + \sum_{j = 1}^\rho
(\deg \s_j + 1) \\= &(c_1(\tilde E).\beta) + (\dim \tilde E - 3)(1
- g) + n + \rho.
\end{align*}

As in the proof of Proposition \ref{reduction}, we consider the
generating series of invariants with possibly disconnected domain
curves while keeping the total contact order $d_2 = |\mu|$. Then we
degenerate the series according to the contact order.

\medskip

We first analyze the splitting of curve classes. Under $N_1(\tilde
E) = i_* N_1(Z) \oplus \mathbb{Z}\gamma$, $\beta = \beta_Z + d_2
\gamma$ may be split into
\begin{equation*}
\beta^1 \in NE(Y_1) \subset NE(E) \oplus \mathbb{Z}\bar \gamma,
\quad \beta^2 \in NE(Y_2) \equiv NE(\tilde E),
\end{equation*}
such that
\begin{equation*}
(\beta^1, \beta^2) = (\beta_E^1 + c\bar\gamma, \beta_Z^2 +
e\gamma)
\end{equation*}
is subject to the condition $\phi_*\beta^1 + p_*\beta^2 = \beta$,
i.e.
\begin{equation*}
\bar\phi_*\beta_E^1 + \beta_Z^2 = \beta_Z, \quad c = d_2 \ge 0,
\end{equation*}
and the contact order relation
\begin{equation*}
e = (E.\beta^2)^{\tilde E} = (E.\beta^1)^{Y_1} = c +
(E.\beta_E^1)^{Y_1} = d_2 - (E.\beta_E^1)^{\tilde E}.
\end{equation*}

As an effective class in $E$, $\beta_E^1$ is also effective in
$\tilde E$, hence $\beta_E^1 = \zeta + m\gamma$ with $\zeta \in
NE(Z)$ and $m \in \mathbb{Z}$. It is clear that $\zeta =
\bar\phi_*\beta_E^1$ and $m = (E.\beta_E^1)^{\tilde E}$. It should
be noticed that
\begin{equation*}
e = d_2 - m
\end{equation*}
is not necessarily smaller than $d_2$ since $m$ maybe negative.
This causes no trouble since we always have that
\begin{equation*}
\beta - \beta^2 = (\beta_Z + d_2 \gamma) - (\beta_Z^2 + e\gamma) =
\bar\phi_* \beta_E^1 + m\gamma = \beta_E^1 \ge 0.
\end{equation*}
The equality holds if and only if $\beta_E^1 = 0$ and in that case
we arrive at fiber class integrals on $(Y_1, E)$ with $\beta^1 =
d_2 \bar\gamma$.

In fact, more is true. It is automatic that $[\beta] > [\beta^2]$
under the curve class splitting. The equality $[\beta] = [\beta^2]$
occurs if and only if $\beta^1_E$ consists of extremal rays
$d_1\ell$. But extremal rays must stay inside $Z$, hence we again
conclude that $\beta^1_E = 0$ and get fiber integrals on $(Y_1,
E)$. No summation over extremal rays is needed for these integrals.

\medskip

Next we analyze the splitting of cohomology insertions. 

It is sufficient to consider $(\s_1, \ldots, \s_\rho) = \e_I = (\e_{i_1},
\ldots, \e_{i_\rho})$. Since $\s_i|_Z = 0$, one may choose the
cohomology lifting $\s_i(0) = (\iota_{1*}\, \s_i, 0)$. This ensures
that insertions of the form $\tau_k\,\s$ must go to the $Y_1$ side
in the degeneration formula.

For a general cohomology insertion $\alpha \in H(\tilde E)$, by
Lemma \ref{switch}, the lifting can be chosen to be $\alpha(0) =
(a, \alpha)$ for some $a$. From $\alpha(0) = (a, \alpha)$ and
$\T\alpha(0) = (a', \T\alpha)$, Lemma \ref{coh-red} implies that $a
= a'$.

\medskip

As before the relative invariants on $(Y_1, E)$ can be regarded as
constants under $\T$. Then
\begin{align*}
&\langle a_1, \cdots, a_n, \tau_{\mu_1 - 1}\e_{i_1},
\cdots, \tau_{\mu_\rho - 1} \e_{i_\rho} \rangle^{\bullet\tilde
E}_{g, \beta_S, d_2} = \sum_{\mu'} m(\mu') \times \\
&\quad \sum_{I'} \langle \tau_{\mu_1 - 1} \e_{i_1}, \cdots,
\tau_{\mu_\rho - 1} \e_{i_\rho} \mid \e^{I'}, \mu'
\rangle^{\bullet (Y_1, E)}_{0, 0} \langle a_1, \cdots,
a_n \mid \e_{I'}, \mu' \rangle^{(\tilde E, E)}_{g, \beta_S} +
R,
\end{align*}
where the main terms contain invariants whose $(\tilde E, E)$
components admit the highest order with respect to the first four
induction parameters
\begin{equation*}
(g, \beta_S, |\mu| = d_2, n).
\end{equation*}
In fact, the potentially highest order term $\langle a_1,
\cdots, a_n \mid \e_I, \mu\rangle_{g, \beta_S}^{(\tilde E,
E)}$ occurs by the dimension count at the beginning of the proof.
Yet it is not clear a priori whether it is also the highest one in
$\rho$.

For the the remaining terms $R$, a term is in it if each connected
component of its relative invariants on $(\tilde E, E)$ has either
smaller genus or has $\beta^2_S$ strictly smaller than $\beta_S$ or
has smaller contact order or has fewer insertions than $n$. Notice
that disconnected invariants on $(\tilde E, E)$ must lie in $R$.

For the main terms, by the genus constraint and the fact that the
invariants on $(\tilde E, E)$ are connected, the invariants on
$(Y_1, E)$ must be of genus zero and the connected components are
indexed by the contact points. Also each connected invariant
contains fiber integrals with total fiber class $\beta^1 = d_2
\bar\gamma$.

To get constraints about $(\e_{I'}, \mu')$ and $\rho'$ on the main
terms, we recall the dimension count on $\tilde E$ and $(\tilde E,
E)$. Let $D = (c_1(\tilde E).\beta) + (\dim \tilde E - 3)(1 - g)$.
For the absolute invariant on $\tilde E$,
\begin{equation*}
\sum_{j = 1}^n \deg a_j + |\mu| - \rho + \sum_{j = 1}^\rho
(\deg \e_{i_j} + 1) = D + n + \rho,
\end{equation*}
while on $(\tilde E, E)$ (notice that now $(c_1(\tilde E).\beta^2)
= (c_1(\tilde E).\beta$)),
\begin{equation*}
\sum_{j = 1}^n \deg a_j + \sum_{j = 1}^{\rho'} \deg \e_{i'_j}
= D + n + \rho' - |\mu'|.
\end{equation*}
Hence $(\e_I, \mu)$ occurs in $(\e_{I'}, \mu')$'s and in
particular, $R$ is $\T$-invariant by induction. Moreover,
\begin{equation*}
\deg \e_I - \deg \e_{I'} = \rho - \rho'.
\end{equation*}

We will show that the highest order term in the main terms, with
respect to all five parameters, consists of the single one
\begin{equation*}
C(\mu)\langle a_1, \cdots, a_n \mid \e_I,
\mu\rangle^{(\tilde E, E)}_{g, \beta_S}
\end{equation*}
with $C(\mu) \ne 0$.

For any $(\e_{I'}, \mu')$ in the main terms, consider the splitting
of weighted partitions
\begin{equation*}
(\e_I, \mu) = \coprod_{k = 1}^{\rho'} (\e_{I^k}, \mu^k)
\end{equation*}
according to the connected components of the relative moduli of
$(Y_1, E)$, which are indexed by the contact points of $\mu'$.

Since fiber class relative invariants on $P^1$ bundles over $E$ can
be computed by pairing cohomology classes in $E$ with certain
Gromov--Witten invariants in the fiber $P^1$ (c.f.\ \cite{MP},
\S1.2), we must have $\deg \e_{I^k} + \deg \e^{i'_k} \le \dim E$ to
get non-trivial invariants. That is
\begin{equation*}
\deg \e_{I^k} = \sum_{j} \deg \e_{i^k_j} \le \dim E - \deg
\e^{i'_k} \equiv \deg \e_{i'_k}
\end{equation*}
for each $k$. In particular, $\deg \e_I \le \deg \e_{I'}$, hence
also $\rho \le \rho'$.

The case $\rho < \rho'$ is handled by the induction hypothesis, so
we assume that $\rho = \rho'$ and then $\deg \e_{I^k} = \deg
\e_{i'_k}$ for each $k = 1, \ldots, \rho'$. In particular $I^k \ne
\emptyset$ for each $k$. This implies that $I^k$ consists of a
single element. By reordering we may assume that $I^k = \{i_k\}$
and $(\e_{I^k}, \mu^k)= \{(\e_{i_k}, \mu_k)\}$.

Since the relative invariants on $Y_1$ contain genus zero fiber
integrals, the virtual dimension for each $k$ (connected component
of the relative virtual moduli) is
\begin{align*}
&2\mu'_k + (\dim Y_1 - 3) + 1 + (1 - \mu'_k) \\
&\qquad = (\mu_k - 1) + (\deg \e_{i_k} + 1) + (\dim E - \deg
\e_{i'_k}).
\end{align*}
Together with $\deg \e_{i_k} = \deg \e_{i'_k}$, this implies that
\begin{equation*}
\mu'_k = \mu_k, \quad k = 1, \ldots, \rho.
\end{equation*}

From the fiber class invariants consideration and
\begin{equation*}
\deg \e_{i_k} + \deg \e^{i'_k} = \dim E,
\end{equation*}
$\e_{i_k}$ and $\e^{i'_k}$ must be Poincar\'e dual to get
non-trivial integral over $E$. That is, $\e_{i'_k} = \e_{i_k}$ for
all $k$ and $(\e_{I'}, \mu') = (\e_I, \mu)$. This gives the term we
expect where $C(\mu)$ is a product of nontrivial fiber class
invariants
\begin{equation*}
\prod_{k = 1}^\rho \left(\langle \tau_{\mu_k - 1} \e_{i_k} \mid
\e^{i_k}, \mu_k\rangle_{0,\,\mu_k\bar\gamma}^{(Y_1, E)} \, q^{\mu_k
\bar\gamma} \right) = c_\mu q^{d_2\bar\gamma}
\end{equation*}
with $c_\mu \ne 0$.

In order to compare with the series $\langle a_1, \cdots,
a_n, \tau_{\mu_1 - 1}\e_{i_1}, \cdots, \tau_{\mu_\rho - 1}
\e_{i_\rho} \rangle^{\tilde E}_{g, \beta_S, d_2}$, which satisfies
the functional equation under $\T$ by assumption, we need only to
match the formal variables involved. Under $\phi: Y_1 \to \tilde E$
we set $q^{\bar\gamma} \mapsto q^\gamma$ and under $p: Y_2 \cong
\tilde E \to \tilde E$ we set $q^{\gamma} \mapsto q^0 = 1$.
Similarly we identify formal variables in the $\tilde E'$ side. It
is clear that these identifications commute with $\T$. Hence
\begin{equation*}
\T\langle a_1, \cdots, a_n \mid \e_I, \mu\rangle_{g,
\beta_S}^{\tilde E} \cong \langle \T a_1, \cdots, \T a_n
\mid \e_I, \mu\rangle_{g, \beta_S}^{\tilde E'},
\end{equation*}
and the proof of Proposition~\ref{p:4.8} is complete.
\end{proof}

\section{Reconstructions on local models} \label{reconstuction}

In this section, $X$ and $X'$ are the projective local models (double projective bundles over $S$) of the flop
\begin{equation*}
f: X = {\tilde E} = P_Z(N_{Z/X}\oplus\mathscr{O}) \dasharrow
X' = {\tilde E}' = P_{Z'}(N_{Z'/X'}\oplus\mathscr{O}).
\end{equation*}

Since we consider only genus zero invariants for the discussion on
big quantum rings, the subscript on genus will be omitted. One special feature for genus zero GW theory is that there exists several reconstruction theorems which allow us to deal with only some initial GW invariants.

By Leray--Hirsch,
$$
H(X) = H(S)[h, \xi]/(f_F(h), f_{N \oplus \mathscr{O}}(\xi)).
$$
So every $a \in H(X)$ admits a canonical presentation $a = \bart h^i \xi ^j$ with $0 \le i \le r$, $0 \le j \le r + 1$ and $\bart\in H(S)$. (In this case $\T a = \bart (\T h)^i (\T \xi)^j = \bart (\xi' - h')^i \xi'^j$ for $i \le r$ and for any $j$.) We abuse notations by writing $\xi | a$ if $j \ge 1$.

\begin{definition} [($f$-special invariants)]
An insertion $\tau_k a$ is called \emph{special} if $k \ne 0$ implies that $\xi | a$. A (possibly) descendent invariant is $f$-special it is not extremal (i.e.~$(\beta_S , d_2) \ne (0, 0)$) and if all of its insertions are special. An $f$-special invariant is of type I if $\xi$ divides some insertion, otherwise it is called of type II.
\end{definition}

\subsection{Topological recursion relation and divisor axiom}

\begin{theorem} \label{comp-red-local}
The $\T$-invariance for descendent invariants of $f$-special type is equivalent to the $\T$-invariance of big quantum rings.
\end{theorem}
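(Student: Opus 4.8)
The plan is to prove both implications, with the nontrivial direction being the reduction of $f$-special descendent invariants to the (non-descendent) big quantum product. The reverse implication is essentially formal: if $\T$-invariance holds for all $f$-special descendent invariants then it holds in particular for those with all $\tau_{k_i} = \tau_0$, i.e.\ for primary non-extremal invariants, and together with Corollary~\ref{3-pt-inv} (extremal $\T$-invariance) and the fact that $\T$ preserves the Poincar\'e pairing (Theorem~\ref{t:1.1}), this yields $\T$-invariance of all $3$-point genus-zero functions, hence of the structural constants $\p^3_{\mu\nu\kappa}F_0$ of the big quantum product. So I would dispatch this direction in one short paragraph and concentrate on the forward direction.

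For the forward direction, the key tool is the \emph{topological recursion relation} (TRR) in genus zero, which expresses any descendent invariant $\langle \tau_{k_1}a_1, \ldots \rangle$ with some $k_i \ge 1$ in terms of invariants with strictly lower total descendent degree, by splitting off a three-point function at a node:
\begin{equation*}
\langle \tau_{k}a, \tau_{k_2}a_2, \ldots, \tau_{k_n}a_n\rangle_{0,\beta}
= \sum \langle \tau_{k-1}a, \ldots \rangle \langle \ldots \rangle,
\end{equation*}
the sum running over splittings of the marked points, curve classes and the diagonal. First I would set up an induction on the total descendent degree $\sum k_i$ (and, within that, on the number of marked points and on the curve class $[\beta] = (\beta_S, d_2) \in W$ in the partial order from Section~\ref{ReToAbs}), using the degeneration-reduced form of the problem (Theorem~\ref{deg-red}): it suffices to treat $f$-special invariants on the local models $X_{loc} = \tilde E$. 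The base case $\sum k_i = 0$ is exactly $\T$-invariance of primary invariants, which by the reconstruction/divisorial arguments (and the classical defect computation of Theorem~\ref{top-defect-1} together with the quantum corrections of Theorem~\ref{q-correct}) is the statement that the big quantum ring is $\T$-invariant. For the inductive step, I would apply TRR to lower $\sum k_i$; the crucial point is that each factor produced by the splitting is again of $f$-special type, or is a two/three-point function that either is extremal (handled by Theorem~\ref{big-q}) or has strictly smaller complexity. This requires checking that the specialness condition ``$k \ne 0 \Rightarrow \xi\mid a$'' is preserved under the diagonal splitting: since $\xi$ restricted to the node insertion can be absorbed using the divisor axiom for $\xi$ (noting $\xi$ is an infinity-divisor class inside the flop-isomorphism locus), and the lowered-descendent insertion $\tau_{k-1}a$ retains the same $a$ hence the same divisibility by $\xi$.

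The main obstacle I anticipate is bookkeeping rather than conceptual: after applying TRR one must ensure that \emph{every} term appearing in the recursion stays within the inductive hypothesis, i.e.\ that the splitting never produces a ``worse'' invariant in all of the ordered parameters simultaneously, and that the terms which are genuinely extremal (the $(\beta_S, d_2) = (0,0)$ pieces) are precisely those covered by the $n$-point extremal functional equations of Theorem~\ref{big-q}. A subtle subpoint is that the $\T$-image of a special insertion need not be special on the nose (because $\T h = \xi' - h'$ mixes $h$ and $\xi$), so one must use the divisorial reconstruction to re-expand $\T a$ in the canonical $\bart h'^i \xi'^j$ basis and verify the descendent index only ever couples to classes divisible by $\xi'$; this is where one invokes the divisor axiom for $\xi$ and $h$ to move powers of these divisors into other insertions, exactly as in the proof of Theorem~\ref{big-q}. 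Once these compatibilities are verified, the induction closes and the equivalence follows.
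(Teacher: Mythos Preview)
Your overall strategy matches the paper's: induct on the total descendent degree $k = \sum k_i$ and apply TRR to strip one $\psi$. But the subtleties you flag are mostly red herrings, and you miss the one that actually matters.

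The diagonal insertions $T_\mu, T^\mu$ produced by TRR carry \emph{no} $\psi$, so they are vacuously special; there is nothing to verify about ``specialness preserved under splitting''. Similarly, the $\T h = \xi' - h'$ worry is irrelevant here: on the $X'$ side one uses the basis $\{\T T_\mu\}$ with dual $\{\T T^\mu\}$ (since $\T$ preserves the Poincar\'e pairing), so the two TRR expansions correspond term by term and no re-expansion or divisorial reshuffling is required.

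The point you do not isolate is why the \emph{first} factor $\langle \tau_{k_1-1}a_1, \ldots, T_\mu \rangle_{\beta_S', d_2'}$ never escapes the inductive hypothesis. Since $k_1 \ge 1$ forces $\xi \mid a_1$, and extremal curves lie in $Z$ where $\xi|_Z = 0$, the contribution at $(\beta_S', d_2') = (0,0)$ vanishes outright. Hence the first factor is genuinely $f$-special with descendent degree $k-1$, and induction applies. The second factor has at least three insertions and descendent degree $< k$, so either induction applies again or, if it is extremal, Theorem~\ref{big-q} (together with the $\beta=0$ formula for descendents) handles it. That is the entire mechanism; no divisor-axiom manipulation of $\xi$ is needed inside the TRR step.

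Finally, you must say how to reach $n \ge 3$ before invoking TRR. The paper reduces $n=1,2$ via the descendent divisor axiom using $b \in H^2(S)$ or $b = \xi$; the crucial constraint is $(b.\ell)=0$, so that $(b.\beta)$ is constant along the fiber series in $q^\ell$ and the axiom yields a clean recursion compatible with $\T$. Choosing $b = h$ would destroy this.
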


\begin{proof}
We only need to prove ``$\Leftarrow$'':

Consider the generating series $\langle \tau_{k_1} a_1, \cdots, \tau_{k_n} a_n\rangle_{\beta_S, d_2}$ of $f$-special type with $(\beta_S, d_2) \ne (0, 0)$. Let $k = \sum_{i} k_i$ be the total descendent degree. We will prove the theorem by induction on $k$.

If $k = 0$, we may assume that $n \ge 3$ by adding divisors $\xi$ or $D \in H^2(S)$  into the insertions. Since $(\xi.\ell) = 0 = (D.\ell)$, this only affects the series by a nonzero constant, hence the $\T$-invariance reduces to the case of big quantum ring.

Now let $k > 0$. Without loss of generality we assume that $k_1 \ge 1$. By induction the results holds for strictly smaller descendent degree and for any $n \ge 1$.

We first treat the case $n \ge 3$. By the \emph{topological recursion relation}
$$
\psi_1 = [D_{1|2, 3}]^{virt},
$$
we get
\begin{align*}
&\langle \tau_{k_1} a_1, \cdots, \tau_{k_n} a_n\rangle_{\beta_S, d_2} \\
&\qquad = \sum_{\mu}  \langle \tau_{k_1 - 1} a_1, \cdots, T_\mu\rangle_{\beta_S', d_2'} \langle T^\mu, \tau_{k_2} a_2, \tau_{k_3} a_3, \cdots \rangle_{\beta_S'', d_2''},
\end{align*}
where the sum is over all splitting of curve classes such that $(\beta_S', d_2') + (\beta_S'', d_2'') = (\beta_S, d_2)$.

Notice that on the RHS, the case $(\beta_S', d_2') = (0, 0)$ is excluded since $\xi|a_1$ and it will lead to trivial invariants. The $(\beta_S', d_2')$ series is then $\T$-invariant since it has strictly smaller descendent order $k_1 - 1 < k$. (Recall that on the $X'$ side we may choose $\T T_\mu$ and $\T T^\mu $ for the splitting since $\T$ preserves the Poincar\'e pairing.)

The $(\beta_S'', d_2'')$ series is also $\T$-invariant: It has strictly smaller descendent degree and it has at least $3$ insertions. So even if $(\beta_S'', d_2'') = (0, 0)$ we still get the $\T$-invariance.

The case $n = 1$ can be reduced to the case $n = 2$ by the divisor equation for descendant invariants. Namely let $b$ be a divisor coming from the base $S$ or $\xi$ such that $b.(\beta_S + d_2\gamma) \ne 0$. Then $(b.\beta) \ne 0$ is independent of $d$ and
$$
\langle b, \tau_k a\rangle_{\beta_S, d_2} = (b.\beta)\langle \tau_k a \rangle_{\beta_S, d_2} + \langle \tau_{k - 1} ab\rangle_{\beta_S, d_2}.
$$

The case $n = 2$ can be similarly reduced to the case $n = 3$. If there is only one descendent insertion, say $\langle a_1, \tau_k a_2 \rangle_{\beta_S, d_2}$, then
$$
\langle b, a_1, \tau_k a_2 \rangle_{\beta_S, d_2} = (b.\beta)\langle a_1, \tau_k a_2 \rangle_{\beta_S, d_2} + \langle a_1, \tau_{k - 1} a_2 b\rangle_{\beta_S, d_2}.
$$
If there are two descendent insertions, say $\langle \tau_{l} a_1, \tau_{k - l} a_2 \rangle_{\beta_S, d_2}$, then
\begin{align*}
\langle b, \tau_{l} a_1, \tau_{k - l} a_2 \rangle_{\beta_S, d_2} &= (b.\beta) \langle \tau_{l} a_1, \tau_{k - l} a_2 \rangle_{\beta_S, d_2} \\
&\quad + \langle \tau_{l - 1} a_1 b, \tau_{k - l} a_2 \rangle_{\beta_S, d_2} + \langle \tau_{l} a_1, \tau_{k - l - 1} a_2 b \rangle_{\beta_S, d_2}.
\end{align*}
All the other series are either 3-point functions or have descendent degree drops by one. Thus by induction the proof is complete.
\end{proof}

\subsection{Divisorial reconstruction and quasi-linearity}

Theorem \ref{comp-red-local} reduces the analytic continuation problem to the local models completely. However, in the actual determination of GW invariants (as will see in later sections), another natural set of initial GW invariants are those with at most one descendent insertion. This suggests another reconstruction procedure.

\begin{definition} [(Quasi-linearity)]
We say that the flop $f$ is quasi-linear if for every special insertion $\alpha \in H(X)\cup\tau_{\bullet}H(E)$, $\bart_i \in H(S)$ and $(\beta_S, d_2) \ne (0, 0)$, we have
$$
\T\langle \bart_1, \cdots, \bart_{n - 1}, \alpha\rangle^X_{\beta_S, d_2} \cong
\langle \bart_1, \cdots, \bart_{n - 1}, \T\alpha\rangle^{X'}_{\beta_S, d_2}.
$$
We call invariants of the above type (with only one insertion not from the base) \emph{elementary}. Quasi-linearity is the $\T$-invariance for elementary $f$-special invariants.\end{definition}

 Notice that the similar statement for descendent invariants, even for simple flops, is generally wrong if $\alpha = \tau_k a$ with $k > 0$ but $a \not\in H(E)$ (c.f.~\cite{LLW}).

\begin{theorem} \label{p:5.1}
Suppose that $f$ is quasi-linear. Then all descendent invariants of $f$-special type are $\T$-invariant. Namely for $\alpha =
(\alpha_1,\ldots,\alpha_n)$ ($n \ge 1$) with $\alpha_i \in
H(X)\cup\tau_{\bullet}H(E)$ and for $(\beta_S, d_2) \ne (0, 0)$, we have
$$
\T\langle \alpha\rangle^{X}_{\beta_S, d_2} \cong \langle \T\alpha
\rangle^{X'}_{\beta_S, d_2}.
$$

More precisely, any series of $f$-special type can be reconstructed, in an $\T$-compatible manner, from the extremal functions with $n \ge 3$ points and  elementary $f$-special series.
\end{theorem}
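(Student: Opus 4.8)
The plan is to reduce, in an $\T$-compatible way, an arbitrary $f$-special descendent series to the two classes of inputs named in the statement, via a single induction that first uses the topological recursion relation (as in the proof of Theorem~\ref{comp-red-local}) to kill descendents and then uses divisorial reconstruction \cite{LP} together with the WDVV equation to eliminate the fibre classes $h$ and $\xi$ from all but one insertion. The point, already built into Section~\ref{degeneration}, is that $\T$ is compatible with the projection $N_1(\tilde E)\to N_1(S)\oplus\mathbb Z$, so the right objects to induct on are the fibre series $\langle\,\cdot\,\rangle_{\beta_S,d_2}$; the induction then runs on the tuple $(\beta_S,d_2;\,n;\,k;\,\sum_i(m_i+l_i))$ ordered lexicographically, with $(\beta_S,d_2)$ ordered by the quotient Mori cone $W=NE(\tilde E)/\mathbb Z\ell$, $n$ the number of insertions, $k$ the total descendent degree, and $m_i,l_i$ the powers of $h,\xi$ in the canonical presentation $\bart_i h^{m_i}\xi^{l_i}$ of the ($i$-th) insertion (resp.\ of the class inside $\tau_{k_i}$).

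First I would invoke Theorem~\ref{comp-red-local}: its proof (TRR plus the descendent divisor axiom, with $n=1,2$ reduced to $n\ge3$ using that $(\beta_S,d_2)\ne(0,0)$ always supplies a divisor of nonzero degree on the class --- a pullback $\bar p^*D$ with $(D.\beta_S)\ne0$ when $\beta_S\ne0$, or $\xi$ when $\beta_S=0$, $d_2\ne0$) brings everything down to non-descendent series with $n\ge3$, i.e.\ to the structure constants $\langle a_1,a_2,a_3\rangle^X_{\beta_S,d_2}$ of the big quantum ring. The extremal part $(\beta_S,d_2)=(0,0)$ of these is $\T$-invariant by Corollary~\ref{3-pt-inv}, so it remains to treat $(\beta_S,d_2)\ne(0,0)$.

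For those, write each insertion as $\bart_i h^{m_i}\xi^{l_i}$ and use divisorial reconstruction to migrate all powers of the divisors $h$ and $\xi$ into a single marked point: to extract a factor of $\xi$ (or $h$, or a base divisor) from some $a_i$ one adjoins an auxiliary marked point carrying that divisor, applies WDVV, and then the divisor equation; each such move strictly lowers $\sum_i(m_i+l_i)$ modulo terms in which the curve class splits as $\beta'+\beta''=\beta$. In every split term both factors carry at least three insertions, hence each is either a strictly lower term of the induction, or --- when one factor carries a multiple of $\ell$ --- an extremal function with $n\ge3$ points, $\T$-invariant by Theorem~\ref{big-q}. The coefficients $(h.\beta)$, $(\xi.\beta)$, $(\bar p^*D.\beta)$ produced by the divisor equation are compatible with $\T$ because $\T\circ\delta_h=\delta_{\T h}\circ\T$ and because on the $X'$-side one takes the $\T$-dual diagonal splitting $\Delta(X')=\sum_\mu\T T_\mu\otimes\T T^\mu$. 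Iterating, the series is reconstructed from extremal functions with $n\ge3$ points and from series $\langle\bart_1,\dots,\bart_{n-1},\bart_n h^m\xi^l\rangle_{\beta_S,d_2}$, i.e.\ elementary $f$-special series (note that for a non-descendent insertion the ``special'' condition is automatic, so no further reduction to $\xi\mid a$ is needed here), which are $\T$-invariant by the quasi-linearity hypothesis; a residual all-base term is brought into the elementary case by adjoining one more divisor.

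The step I expect to be the main obstacle is exactly this control of the curve-class splittings under the non-effective correspondence $\T\ell=-\ell'$. Since $\T$ does not preserve $NE$, the entire induction has to be run at the level of the fibre series $\langle\,\cdot\,\rangle_{\beta_S,d_2}$ regarded as rational functions of $q^\ell$ (living in an $A(S)[\f]$-type ring), so that ``$\T$-invariance up to analytic continuation'' makes sense term by term; and one must verify closure of the scheme, namely that no WDVV split term escapes the three permitted types (strictly lower $f$-special, $n\ge3$-point extremal, elementary). Two facts make this closure work and should be checked carefully: extremal pieces can never occur with fewer than three insertions, because WDVV splits a configuration of $\ge4$ points into two configurations of $\ge3$ points each; and the $f$-special structure --- a $\tau_k$ with $k>0$ only on $E$-supported classes --- is preserved by every move, since the only mechanism that ever creates a $\tau_k$ is TRR applied to an insertion that is already $E$-supported.
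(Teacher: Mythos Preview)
Your overall scheme matches the paper's---induct on $(\beta_S,d_2)\in W$, kill descendents via TRR and the descendent divisor axiom (this is literally Theorem~\ref{comp-red-local}), then push all fibre divisors into one insertion by divisorial reconstruction---but the second step has a genuine gap.

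The problem is the $\T$-compatibility of moving $h$. When you transfer a factor of $h$ from $a_i$ to $a_j$ and $a_j$ already carries $h^r$ in its canonical presentation, the main term acquires $h^{r+1}$, and by Lemma~\ref{defect-2} one has $\T(h^{r+1})-(\T h)^{r+1}=(-1)^{r+1}\T\Theta_{r+1}$ along $Z'$. So the main terms on the $X$ and $X'$ sides fail to match. Your justification---$\T\circ\delta_h=\delta_{\T h}\circ\T$ plus the $\T$-dual diagonal splitting---only says that the \emph{structure} of the divisor relation transports; it says nothing about the cup products $h\cdot a_j$ appearing in the main term. The paper's proof (Step~2) meets this head on: it uses the descendent-free divisor relation~\eqref{div-rel-1}, and shows that the topological defect is cancelled by a \emph{two-point extremal} splitting term
\[
\langle \bart_1 h^{l_1-1},\alpha_3,\dots,\alpha_n,\bart_2\Theta_{r+1}\rangle_{\beta_S,d_2}\;\delta_h\langle \bart_2 h^r,\hatt\bart_2 h^r\rangle_{0,0},
\]
whose own $\T$-defect (coming from $\f(q)+\f(q^{-1})=(-1)^r$) is exactly $(-1)^{r+1}$ times the first factor. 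This two-point extremal piece is precisely what your closure claim (``extremal pieces can never occur with fewer than three insertions'') rules out; its presence is not a failure of closure but the mechanism that makes the reduction $\T$-compatible.

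A secondary point: the paper separates Type~I (some insertion divisible by $\xi$) from Type~II (none). In Type~I one uses relation~\eqref{div-rel-2} to push $\psi$, $h$, $\xi$ into the $\xi$-bearing slot; this is $\T$-compatible because all products with $\xi$ are, but it \emph{creates} descendents there---so the elementary endpoint is $\langle\bart_1,\dots,\bart_{n-1},\tau_k a\xi\rangle$, not the descendent-free form you write. In Type~II one is forced to use~\eqref{div-rel-1} and the cancellation above. Your sketch conflates the two, and your vague ``add a divisor, WDVV, then divisor equation'' recipe does not by itself produce either relation in the form needed for the cancellation.
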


We will prove the reconstruction by induction on $(\beta_S, d_2) \in W$, and then on $m$ which is the number of insertions not coming from base classes. This is based on the following observations: \smallskip

(1) Under \emph{divisorial reconstruction}: $\psi_i + \psi_j = [D_{i|j}]^{virt}$, and for $L \in {\rm Pic}(X)$,
\begin{equation} \label{div-rel-2}
e_i^* L = e_j^*L + (\beta.L)\psi_j - \sum_{\beta_1 + \beta_2 = \beta} (\beta_1.L) [D_{i \beta_1| j \beta_2}]^{virt}
\end{equation}
(\cite{LP}, c.f.~also \cite{LLW}), the degree $\beta$ is either preserved
or split into effective classes $\beta = \beta_1 + \beta_2$.

(2) When summing over $\beta \in (d_2 \gamma +
\bar\psi^*\beta_S.H_r + \mathbb{Z} \ell) \cap NE(X)$, the splitting
terms can usually be written as the product of two generating
series with no more marked points in a manner which will be clear
in each context during the proof.

\smallskip

We also need to comment on the excluded cases $(\beta_S, d_2) = (0, 0)$:\smallskip

(3) Let $\alpha_i = \tau_{k_i} a_i$. If $k = \sum k_i \ne 0$, say $\xi|a_1$, then the extremal invariants survive only for the case $\beta = 0$. Since $\Mbar_{0, n}(X, 0)
\cong \Mbar_{0, n} \times X$, we have
\begin{equation} \label{degree=0}
\langle \tau_{k_1} a_1, \cdots, \tau_{k_n} a_{n} \rangle_{n, \beta = 0} =
\int_{\Mbar_{0, n}} \psi_1^{k} \times \int_X a_1\cdots a_n.
\end{equation}
It is non-trivial only if $k = \dim \Mbar_{0, n} = n - 3$, and then
$$
\int_X a_1 \cdots a_n =
\int_{X'} \T a_1 \cdots \T a_n
$$
since the flop $f$ restricts to an isomorphism on $E$.

(4) For extremal invariants with $k = 0$, since $\xi|_Z = 0$
and the extremal curves will always stay in $Z$, we get trivial
invariant if one of the insertions involves $\xi$. Hence by
Theorem~\ref{big-q} the statement in the theorem still holds in
this initial case except for the 2-point invariants $\langle
\bart_1h^r, \bart_2h^r\rangle$. By the divisor axiom
$$
\delta_h \langle \bart_1h^r, \bart_2h^r\rangle = \langle h, \bart_1h^r,
\bart_2h^r\rangle_+,
$$
the 2-point invariants will satisfy the $\T$-invariance functional equation up to
analytic continuation only after incorporated with classical
defect. Thus we may base our induction on $(\beta_S, d_2) = (0,0)$
with special care taken to handle this case.\smallskip

\begin{proof}
Let $(\beta_S, d_2) \ne (0, 0)$. If $m = 1$ then we are done, so let $m \ge 2$ . \smallskip

Step 1. First we handle the type I case, i.e.~ with the
appearance of $\xi$ in some $\alpha_i$.

By reordering we may assume that $\alpha_n = \tau_s \xi a$, $s \ge
0$. Write
$$
\alpha_1 = \bart_1\tau_k h^l\xi^j.
$$
We will reduce $m$ by moving divisors in $\alpha_1$ into
$\alpha_n$ in the order of $\psi$, $h$ and $\xi$. This process is compatible with $\T$ since $\T a.\T \xi = \T(a.\xi)$.

For $\psi$, we use the equation
$$
\psi_1 = -\psi_n + [D_{1|n}]^{virt}.
$$
If $k \ge 1$ then $j \ne 0$ and we get
\begin{align*}
\langle \bart_1\tau_k h^l\xi^j, \cdots, \tau_s \xi a\rangle_{\beta_S,
d_2} &= -\langle \bart_1\tau_{k - 1} h^l\xi^j, \cdots, \tau_{s + 1}
\xi a\rangle_{\beta_S, d_2} \\
&\quad + \sum_{\mu} \langle \bart_1\tau_{k - 1} h^l\xi^j, \cdots,
T_{\mu}\rangle_{\beta_S', d_2'} \langle T^{\mu}, \cdots, \tau_s \xi
a\rangle_{\beta_S'', d_2''}.
\end{align*}
For each $i$, if one of $(\beta_S', d_2')$ and $(\beta_S'',
d_2'')$ is $(0,0)$ then since both terms contain $\xi$ the
splitting term must vanish. So we may assume that
$$
(\beta_S', d_2') < (\beta_S, d_2) \quad \mbox{and} \quad
(\beta_S'', d_2'') < (\beta_S, d_2)
$$
and these terms are done by the induction hypothesis. (By performing
this procedure to $\alpha_1, \ldots, \alpha_{n - 1}$ we may assume
that the only descendent insertion is $\alpha_n$.)

For $h$, if $l \ge 1$ we use the divisor relation (\ref{div-rel-2}) for $L = h$ to get
\begin{align*}
&\langle \bart_1h^l\xi^j, \cdots, \tau_s \xi a\rangle_{\beta_S, d_2}
\\
&\quad = \langle \bart_1h^{l - 1}\xi^j, \cdots, \tau_s \xi
ah\rangle_{\beta_S, d_2} + \delta_h\langle \bart_1h^{l - 1}\xi^j,
\cdots, \tau_{s + 1} \xi a\rangle_{\beta_S, d_2} \\
&\qquad - \sum_{\mu} \delta_h\langle \bart_1h^{l - 1}\xi^j, \cdots,
T_{\mu}\rangle_{\beta_S', d_2'} \langle T^{\mu}, \cdots, \tau_s \xi
a\rangle_{\beta_S'', d_2''}.
\end{align*}

The only cases for the splitting term to have one factor with the
same $(\beta_S, d_2)$ and $m$ are of the form (denote by $\bart_*$ some set of insertions $\alpha_j \in H(S)$)
$$
\delta_h\langle \bart_1h^{l - 1}\xi^j, \bart_*, T_{\mu} \rangle_{0,0} \langle T^{\mu},
\cdots, \tau_s \xi a \rangle_{\beta_S,
d_2},
$$
where the LHS has $n'$ points, or
$$\delta_h\langle \bart_1 h^{l - 1} \xi^j,
\cdots,  T_{\mu}\rangle_{\beta_S, d_2} \langle T^{\mu}, \bart_*,
\tau_s \xi a\rangle_{0,0}.
$$
But $l - 1 < r$ forces the former LHS
invariants to vanish: For $j \ne 0$ this is trivial. For $j = 0$,  the codimension (c.f.~\S \ref{q-correction})
\begin{equation} \label{codim}
\mu = |h| - (2r + 1 + n' - 3) < 2r - 2r = 0.
\end{equation}
The latter RHS invariants also vanish since they contain $\xi$.

If $j = 0$, the case $(\beta_S', d_2') = (0, 0)$ may still support nontrivial
invariants with 3 or more points. In that case $m$ decreases in the RHS. For  the other terms, the only possible appearance of type II invariants (i.e.~without $\xi$ insertion) is
\begin{equation} \label{new-typeII}
\delta_h\langle \bart_1h^{l - 1}, \cdots, T_{\mu}\rangle_{\beta_S', d_2'} = \langle h, \bart_1h^{l - 1}, \cdots, T_{\mu}\rangle_{\beta_S', d_2'},
\end{equation}
where $j = 0$, which has at least 3 points and $(0, 0) < (\beta_S', d_2') < (\beta_S, d_2)$.

For $\xi$, the argument is entirely similar. For $j \ge 1$, the
divisor relation says that
\begin{align*}
 &\langle \bart_1\xi^j, \cdots, \tau_s \xi a\rangle_{\beta_S, d_2} \\
 &\quad = \langle \bart_1\xi^{j - 1}, \cdots, \tau_s \xi^2 a \rangle_{\beta_S,
d_2} + \delta_\xi \langle \bart_1\xi^{j - 1}, \cdots, \tau_{s + 1} \xi
a\rangle_{\beta_S, d_2}\\
 &\qquad - \sum_{\mu} \delta_\xi\langle \bart_1\xi^{j - 1}, \cdots,
T_{\mu}\rangle_{\beta_S', d_2'} \langle T^{\mu}, \cdots, \tau_s \xi
a\rangle_{\beta_S'', d_2''}.
\end{align*}
We then have $(\beta_S', d_2') < (\beta_S, d_2)$ and $(\beta_S'',
d_2'') < (\beta_S, d_2)$ as before. Notice that only type I invariants appear in the reduction.
\smallskip

Step 2. Next we deal with the type II case: $\alpha_i = \bart_i h^{l_i}$, $1 \le i \le n$. In case $\beta_S=0$, we can add one $\xi$ into the insertions and then go back to Step 1. From (\ref{new-typeII}), $(\beta_S,d_2)$ will be getting smaller when the possible type II invariants appear again, so it is done by induction. Thus we can allow $\beta_S\ne 0$ here. By adding base divisors into the insertions we may always assume that $n \ge 3$.

We can not apply (\ref{div-rel-2}) to move divisors since it will produce non $f$-special invariants. Instead, since $n \ge 3$ we may apply (\ref{div-rel-1}), the descendent-free form of the divisor relation, as we have used in the proof of Theorem \ref{big-q}.

Suppose that $l_1 > 0$ and $l_2 > 0$ and we move $h$ from $\alpha_1$ to $\alpha_2$. We run induction on $l_1$. Namely we assume the $\T$-invariant reduction holds for $\alpha_1 = \bart_1 h^j$ with $j \le l_1 - 1$. The initial case $j = 0$ holds since $m$ drops by 1. Then
\begin{align*}
&\langle \bart_1 h^{l_1}, \bart_2 h^{l_2}, \alpha_3, \cdots \rangle_{\beta_S, d_2} \\
&\quad =\langle \bart_1 h^{l_1 - 1}, \bart_2 h^{l_2 + 1}, \alpha_3, \cdots \rangle_{\beta_S, d_2} \\
 &\qquad + \sum_{\mu} \langle \bart_1 h^{l_1 - 1}, \alpha_3, \cdots, T_{\mu} \rangle_{\beta_S', d_2'} \delta_h \langle \bart_2 h^{l_2}, \cdots, T^{\mu} \rangle_{\beta_S'', d_2''} \\
 &\quad\qquad - \delta_h \langle \bart_1 h^{l_1 - 1}, \cdots, T_{\mu} \rangle_{\beta_S', d_2'} \langle \bart_2 h^{l_2}, \alpha_3, \cdots, T^{\mu} \rangle_{\beta_S'', d_2''}.
\end{align*}

If $l_2 \le r - 1$, the processes on $X$ and $X'$ are clearly $\T$-compatible and the splitting terms are all handled by induction. Indeed, if $(\beta_S', d_2') = (\beta_S, d_2)$ and $m' = m$ then $(\beta_S'', d_2'') = (0, 0)$ which gives an extremal function with $m'' \le 2$. The analogous codimension condition as in (\ref{codim}) forces the term to vanish. Similar consideration applies to the case $(\beta_S'', d_2'') = (\beta_S, d_2)$ as well.

If $l_2 = r$, the first term is no longer $\T$-compatible. The topological defect of the second insertion is given by Lemma \ref{defect-2}: $\T (h^{r + 1}) - (\T h)^{r + 1} = (-1)^{r + 1} \T\Theta_{r + 1}$, where $\Theta_{r + 1}$ is the dual class of ${\rm pt}.h^r \xi^0$. Meanwhile, the splitting terms also contain one term not of lower order in $(\beta_S, d_2)$ and $m$. By the codimension consideration as in (\ref{codim}), we have $T^{\mu} = \hatt \bart_2 h^r$
and the term is given by
$$
\langle \bart_1 h^{l_1 - 1}, \alpha_3, \cdots, \alpha_n,  \bart_2 \Theta_{r + 1} \rangle_{\beta_S, d_2} \delta_h \langle \bart_2 h^r, \hatt \bart_2  h^r \rangle_{0, 0}.
$$
Comparing with its corresponding term on $X'$
$$
\langle \bart_1 \T h^{l_1 - 1}, \T \alpha_3, \cdots, \T \alpha_n,  \bart_2 \T \Theta_{r + 1} \rangle_{\beta_S, d_2} \delta_{\T h} \langle \bart_2 \T h^r,
\hatt \bart_2  \T h^r \rangle_{0, 0}
$$
and using the induction, we get the difference to be
\begin{align*}
&-\langle \bart_1 \T h^{l_1 - 1}, \T \alpha_3, \cdots, \T \alpha_n,  \bart_2 \T \Theta_{r + 1} \rangle_{\beta_S, d_2} \times (-1)^{r + 1}\\
&= -\langle \bart_1 \T h^{l_1 - 1}, \bart_2 \T (h^{r + 1}), \cdots \rangle_{\beta_S, d_2} + \langle \bart_1 \T h^{l_1 - 1}, \bart_2 (\T h)^{r + 1}, \cdots \rangle_{\beta_S, d_2}.
\end{align*}
This cancels the defect of the non $\T$-compatible terms.

Thus the whole reduction is $\T$-invariant and the proof is complete.
\end{proof}

\subsection{WDVV equations}

We may strengthen Theorem \ref{p:5.1} to
\begin{theorem} \label{QL-I}
If the quasi-linearity holds for elementary type I series
$$
\langle \bart_1, \cdots, \bart_{n - 1}, \tau_k a \xi \rangle,
$$
then the $\T$-invariance holds for all series of $f$-special type.
\end{theorem}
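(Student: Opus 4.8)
The plan is to derive Theorem~\ref{QL-I} from Theorem~\ref{p:5.1} together with one further use of the WDVV equation. I would first note that the reconstruction in the proof of Theorem~\ref{p:5.1} actually proves a conditional statement: the $\T$-invariance of every $f$-special series follows once one knows it for the extremal functions with $n\ge 3$ points (which is Theorem~\ref{big-q}, with the classical defect absorbed into the two-point pieces as in that proof) and for the \emph{elementary} $f$-special series --- the induction there runs on $(\beta_S,d_2)\in W$ and on the number $m$ of non-base insertions, and quasi-linearity is invoked only at the base case $m=1$. Since the elementary \emph{type~I} series are $\T$-invariant by hypothesis, the whole theorem reduces to showing that the elementary \emph{type~II} series
$$
G := \langle \bart_1,\ldots,\bart_{n-1},\bart_n h^l\rangle_{\beta_S,d_2},\qquad 1\le l\le r,\ (\beta_S,d_2)\neq(0,0),\ \bart_i\in H(S),
$$
is $\T$-invariant, given the $\T$-invariance of all elementary type~I series and of all extremal functions.

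For this I would induct on $(\beta_S,d_2)$ in the partial order of $W=NE(\tilde E)/\!\sim$, then on $l$, then on the number of marked points, arranging $n\ge 3$ by padding with a divisor (which multiplies $G$ by a nonzero constant). The key move is to trade one factor $h$ on the hard insertion $\bart_n h^l$ for a factor $\xi$: I would adjoin an auxiliary marked point carrying $\xi$ and apply a WDVV equation whose four distinguished insertions are $\xi$, $\bart_n h^l$, and two of the $\bart_i$. Its ``leading'' term --- the one in which the curve class does not split and all of $\bart_1,\ldots,\bart_{n-1}$ remain on one factor --- recovers, after the auxiliary $\xi$ is removed by the divisor axiom, a nonzero multiple of $G$; in every other term the auxiliary $\xi$ lies on the same factor as $\bart_n h^l$. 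Splitting the diagonals through the Leray--Hirsch presentation $H(\tilde E)=H(S)[h,\xi]/(f_F(h),f_{N\oplus\mathscr{O}}(\xi))$ and using the Chern relations (so $h^{r+1}$ and $\xi^{r+2}$ drop in order and $\xi\,\Theta_{r+1}=0$), the factor carrying $\xi$ and $\bart_n h^l$ becomes a combination of \emph{type~I} elementary series and of \emph{type~II} elementary series with a strictly smaller power of $h$ or fewer marked points, while each complementary factor is a generating series over the $\ell$-direction affine ray with no more marked points and with class strictly below $[\beta]$ in $W$, or else extremal. All of these are $\T$-invariant --- by the induction hypothesis, the type~I hypothesis, and Theorem~\ref{big-q} --- and running the same WDVV on $X'$ under the compatible splitting $\Delta(X')=\sum\T T_\mu\otimes\T T^\mu$, then comparing leading terms, gives $\T G\cong\langle\T\bart_1,\ldots,\T\bart_{n-1},\T(\bart_n h^l)\rangle^{X'}$, using $\T h=\xi'-h'$, $\T\xi=\xi'$ and $\T\circ\delta_h=\delta_{\T h}\circ\T$.

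The hard part, just as in Theorems~\ref{big-q} and~\ref{p:5.1}, is the defect that surfaces when the Chern relation forces a power $h^{r+1}$, or when a two-point extremal factor $\langle\bart h^r,\hatt\bart h^r\rangle$ appears: over a nontrivial base $\T(h^{r+1})\neq(\T h)^{r+1}$, so the naive contribution is not $\T$-compatible. I expect this to be the main obstacle, and I would dispatch it exactly as in Step~2 of the proof of Theorem~\ref{p:5.1}: Lemma~\ref{defect-2} identifies the defect as $(-1)^{r+1}\T\Theta_{r+1}$, with $\Theta_{r+1}$ dual to $\mathrm{pt}\cdot h^r\xi^0$, and the $\Theta_{r+1}$-contributions produced by the same divisor/WDVV move cancel it precisely. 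The remaining bookkeeping --- verifying that no correction term is itself a type~II elementary series of class $[\beta]$ with the same number of marked points and an equal or larger power of $h$ --- is forced by $\ell$ being an extremal ray of $W$ (so the WDVV curve-class splitting is either trivial on $W$, making the complementary factor a shorter extremal function, or strictly decreasing) and by the fact that moving the single auxiliary $\xi$ through the divisor relation can only lower the power of $h$ or $(\beta_S,d_2)$.
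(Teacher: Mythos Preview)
Your reduction to the elementary case via Theorem~\ref{p:5.1} is fine, and you have correctly identified that the residual problem is the $\T$-invariance of the elementary type~II series
\[
G = \langle \bart_1,\ldots,\bart_{n-1},\bart_n h^l\rangle_{\beta_S,d_2}.
\]
However, your proposed WDVV move has a genuine gap at the crucial step, precisely in the case $d_2 = 0$.

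Your claimed leading term is the one where the curve class does not split, all $\bart_i$'s sit on one factor, and the auxiliary $\xi$ is ``removed by the divisor axiom'' to recover a nonzero multiple of $G$. But the divisor axiom for $\xi$ produces the factor $(\xi.\beta) = d_2$. When $d_2 = 0$ this multiple is zero, so $G$ is \emph{not} recovered. If instead you interpret the leading term as a $\beta_1 = 0$ three-point factor $\langle \xi,\bart_i,T_\mu\rangle_0$ times the remaining factor, the cup product collapses the diagonal to $\xi\bart_i$ and the surviving factor is $\langle \xi\bart_i,\ldots,\bart_n h^l\rangle_\beta$, which is already of type~I --- again not $G$. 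In either reading your WDVV with a single auxiliary $\xi$ fails to isolate the type~II unknown when $d_2 = 0$. And $d_2 = 0$ is exactly the nontrivial case: if $d_2 \ne 0$ one simply inserts $\xi$ by the divisor axiom and the series is instantly type~I, as the paper notes at the start of the proof of Proposition~\ref{II-to-I}.

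The paper's approach is substantively different from yours. For $\langle a,b,\tb_i h^j\rangle_{\beta_S,0}(t)$ it applies WDVV to the configuration $[a\vee b \mapsto \xi c \vee \xi d]$ over the class $(\beta_S,\,d_2 = 1)$ --- not $(\beta_S,0)$ --- with \emph{two} $\xi$-carrying special insertions $c = \tb_k \xi^l$ and $d = h^r$. This produces a linear system
\[
\sum_{i,j}\langle a,b,\tb_i h^j\rangle_{\beta_S,0}(t)\,\langle \hatt\tb_i H_{r-j}\Theta_{r+1},\ \tb_k\xi^{l+1},\ \xi h^r\rangle_{0,1}(t) \;=\; I_{k,l}(t),
\]
in which the type~II series over $(\beta_S,0)$ are the unknowns and the coefficient matrix is the fiber-series matrix over $(\beta_S,d_2)=(0,1)$. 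The paper then shows (via the bundle structure $\Mbar_{0,n}(X,\beta)\to S$ and an explicit simple-flop computation) that this $N\times N$ matrix, $N = (r+1)\sum_i h^i(S)$, is upper-triangular with nonzero diagonal, hence invertible; moreover the coefficient matrix and the right-hand side $I_{k,l}(t)$ are $\T$-invariant. This is what actually transforms type~II into the required combination of type~I, lower-$\beta_S$ type~II, and extremal functions.

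In short, the missing idea in your proposal is that a single $\xi$ adjoined and removed by the divisor axiom cannot touch the $d_2 = 0$ series; one must instead set up WDVV over a \emph{neighboring} class $d_2 = 1$ with two $\xi$-insertions and solve an invertible linear system whose coefficients are governed by the simple-flop fiber series.
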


The significance of this reduction will become clear after we introduce the practical method to calculate GW invariants. The proof is based on

\begin{proposition} \label{II-to-I}
Any type II series over $(\beta_S, d_2)$ can be transformed into sum of products of (1)
type I series over $(\beta_S', d_2') \le (\beta_S, d_2)$, (2) type II series over $\beta_S' < \beta_S$, and (3) extremal functions. Also, the processes can be done in a $\T$-compatible manner.
\end{proposition}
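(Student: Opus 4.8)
The plan is to prove Proposition~\ref{II-to-I} by downward induction on $(\beta_S, d_2) \in W$ (with the alphabetical order on $NE(S) \oplus \mathbb{Z}$) and, within each fixed $(\beta_S, d_2)$, by induction on the number $m$ of non-base insertions and on the total $h$-degree $\sum l_i$ of the type II series $\langle \bart_1 h^{l_1}, \ldots, \bart_n h^{l_n} \rangle_{\beta_S, d_2}$. As in the proof of Theorem~\ref{p:5.1}, one may freely add base divisors so that $n \ge 3$, and one may assume at least two insertions carry a positive power of $h$ (otherwise $m$ has effectively dropped and we are reduced to an elementary, hence type I after Step~1 of Theorem~\ref{p:5.1}, or lower case). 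The engine is the WDVV equation applied to a configuration of four distinguished marked points: three carrying (powers of) the divisor $h$ and one carrying the remaining data, comparing the two ways of degenerating the domain $P^1$ with the four special points distributed $(12|34)$ versus $(13|24)$.

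First I would fix the WDVV relation
\[
 \sum_\mu \langle h, h, \bart_1 h^{l_1 - 1}, T_\mu \rangle \langle T^\mu, \bart_2 h^{l_2}, \ldots \rangle
 = \sum_\mu \langle h, \bart_1 h^{l_1 - 1}, \bart_2 h^{l_2}, T_\mu \rangle \langle T^\mu, h, \ldots \rangle,
\]
summed over the affine ray $\beta \in (d_2\gamma + \bar\psi^*\beta_S.H_r + \mathbb{Z}\ell) \cap NE(X)$ and organized by the quotient weight. The terms in which the dual class $T^\mu$ is represented by a monomial $\bart\, h^a$ with $a \le r$ keep the invariants of type II but strictly lower in one of the induction parameters (either $\beta_S' < \beta_S$, or the same $(\beta_S, d_2)$ but with fewer non-base insertions, or with smaller total $h$-degree after using the Chern relation $f_F(h) = 0$ to rewrite $h^{r+1}$). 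The terms in which the factor produced naturally involves $\xi$ — which happens precisely because the WDVV splitting forces a summand of the diagonal $\Delta(X)$ of the form $(\bart h^r \xi^j) \otimes (\hatt\bart\, \Theta_{r+1-j}\cdots)$ with $j \ge 1$, using the dual-basis description in Lemma~\ref{dual-basis} and Lemma~\ref{defect-1} for the local model $X = \tilde E$ — become type I series, over weights $\le (\beta_S, d_2)$. The genuinely extremal part $(\beta_S', d_2') = (0,0)$ contributes either 3-point extremal functions (class (3)) or vanishes because $\xi|_Z = 0$ kills any $\xi$-decorated extremal invariant, exactly as in the proof of Theorem~\ref{big-q}.

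The main obstacle, and the step requiring the most care, is the bookkeeping of the ``diagonal'' term in the WDVV sum that is \emph{not} of strictly lower order — the one in which $T^\mu$ is the Poincaré dual of a class of the shape $\mathrm{pt}.h^r\xi^0$, forcing (by the codimension count of \S\ref{q-correction}, analogous to \eqref{codim}) that $T^\mu = \hatt\bart_2 h^r$ and $T_\mu = \bart_2 \Theta_{r+1}$. This reproduces a type II series over the \emph{same} $(\beta_S, d_2)$ but now with an honest $\xi$ (via $\Theta_{r+1}$) inside an insertion, i.e.~a type I series; the point is that moving this term to the other side is exactly what converts the original type II series into type I, and one must check that the leftover factor is the nonzero 2-point series $\delta_h \langle \bart_2 h^r, \hatt\bart_2 h^r\rangle_{0,0}$ whose $\T$-behavior is already understood (it is the $\mathfrak{m}=0$ case, $\f(q^\ell) + \f(q^{-\ell}) = (-1)^r$). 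I would then verify $\T$-compatibility term by term exactly as in Theorem~\ref{p:5.1}: the defect $\T h^{r+1} - (\T h)^{r+1} = (-1)^{r+1}\T\Theta_{r+1}$ from Lemma~\ref{defect-2} enters with the opposite sign on the two sides of the WDVV relation and cancels against the defect carried by the $\bart_2 \Theta_{r+1}$ term, so the whole reorganization commutes with $\T$ up to analytic continuation. Finally, assembling the induction: every summand produced is either type I over $(\beta_S', d_2') \le (\beta_S, d_2)$, type II over $\beta_S' < \beta_S$, or an extremal function, which is precisely the assertion; and Theorem~\ref{QL-I} follows by feeding this into Theorem~\ref{p:5.1}, since quasi-linearity for elementary type I series then propagates to all type I, hence (via Proposition~\ref{II-to-I}) to all type II, hence to all $f$-special invariants.
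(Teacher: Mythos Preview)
Your approach has a genuine gap at the base of the induction. The $h$-based moves you describe---which are essentially the divisor-relation reductions of Step~2 in Theorem~\ref{p:5.1}, not WDVV as you label them (the $2$-point factor $\delta_h\langle\bart_2 h^r,\hatt\bart_2 h^r\rangle_{0,0}$ you invoke is a divisor-relation splitting term, not a WDVV term)---successfully lower $l_1$ and hence eventually drop $m$ by one. But the induction on $m$ bottoms out at elementary type~II series $\langle\bart_1,\ldots,\bart_{n-1},\bart_n h^j\rangle_{\beta_S,0}$ (and the $m=0$ case $\langle\bart_1,\ldots,\bart_n\rangle_{\beta_S,0}$) over the \emph{same} $(\beta_S,d_2)$. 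These are none of the allowed outputs (1), (2), (3). Your parenthetical claim that this case is ``elementary, hence type~I after Step~1 of Theorem~\ref{p:5.1}'' is incorrect: Step~1 requires a $\xi$-insertion, which elementary type~II invariants do not have. With only $h$-based moves there is no mechanism to introduce a $\xi$ into these minimal series; the $\Theta_{r+1}$ term you identify does appear, but it is a \emph{side term} in the splitting, not the main term, and iterating your procedure never converts the residual $m\le 1$ type~II series.

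The paper supplies exactly the missing ingredient. First, the case $d_2\ne 0$ is immediate by the divisor axiom: $\langle a_1,\ldots,a_n\rangle_{\beta_S,d_2}=\langle a_1,\ldots,a_n,\xi\rangle_{\beta_S,d_2}/d_2$ is already type~I. For $d_2=0$, the paper applies WDVV in the configuration $[a\vee b\mapsto\xi c\vee\xi d]=[a\vee\xi c\mapsto b\vee\xi d]$ over weight $(\beta_S,1)$, obtaining a linear system
\[
\sum_{i,j}\langle a,b,\tb_i h^j\rangle_{\beta_S,0}\,\langle\hatt\tb_i H_{r-j}\Theta_{r+1},\xi c,\xi d\rangle_{0,1}=I_{c,d}
\]
whose unknowns are \emph{all} type~II $3$-point series over $(\beta_S,0)$, including the elementary ones. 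The coefficient matrix, built from explicit $d_2=1$ fiber integrals on the simple-flop model with $c=\tb_k\xi^l$, $d=h^r$, is shown to be upper triangular with nonzero diagonal (computed directly), hence invertible and $\T$-compatible; and $I_{c,d}$ consists only of type~I series at weight $\le(\beta_S,0)$ and type~II at strictly smaller $\beta_S$. This $\xi$-based WDVV is precisely what your $h$-based scheme lacks.
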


Indeed, with Proposition \ref{II-to-I}, Theorem \ref{QL-I} then follows from the proof of Theorem \ref{p:5.1}: Simply replace Step 2 by the proposition and run the induction. All type II special series eventually disappear. (\emph{Degenerate} type II series with $(\beta_S, d_2) = (0, 0)$ are simply extremal functions.)\medskip

The remaining of this subsection is devoted to the proof of Proposition \ref{II-to-I}. Notice that if $d_2 \ne 0$ then this is trivial: By the divisor axiom,
$$
\langle a_1, \cdots, a_n \rangle_{\beta_S, d_2} =
\langle a_1, \cdots, a_n, \xi\rangle_{\beta_S, d_2}/d_2.
$$

Thus we consider $\langle a_1,
\cdots, a_{n-1}, \bart_i h^j\rangle_{\beta_S, 0}$ with $a_1, \ldots, a_{n - 1} \in
H(Z)$.

Let $\{\tb_i\}$ be a basis for $H(S)$ and
$\{\hatt \tb_i\}$ be its dual basis.
We start with the case of three-point functions $\langle a, b, \tb_i
h^j \rangle_{\beta_S, 0}$ for any $a, b \in H(Z)$. This certainly
includes also the one-point and two-point cases by picking suitable
$a, b \in H^2(S)$.

For any $c, d \in H(X)$, the WDVV equations
$$
\sum_{m, n}\p_{ijm}F_0\, g^{mn}\,\p_{nkl} F_0 =
\sum_{m, n}\p_{ikm} F_0\, g^{mn}\, \p_{njl} F_0
$$
lead to the diagram
\begin{equation*}
[a\vee b \mapsto \xi
c \vee \xi d] = [a \vee \xi c \mapsto b \vee \xi d].
\end{equation*}
We apply it to split the curve classes over $(\beta_S, d_2 = 1)$
and get a linear equation
\begin{equation} \label{WDVV}
\sum_{i, j} \langle a, b, \tb_ih^j \rangle_{\beta_S, 0} \langle
\hatt \tb_i H_{r - j} \Theta_{r + 1}, \xi c, \xi d \rangle_{0, d_2} =
I_{c, d},
\end{equation}
where all terms in the LHS of WDVV with either (1) $\beta_S' < \beta_S$, (2) $d_2' \ne
0$, or (3) with basis class insertion $T_\mu = \tb_ih^j\xi^k$ ($k > 0$) from the diagonal splitting, have been
moved into the RHS. Since the original RHS of WDVV are all type I series, any series in $I_{c, d}$ over $(\beta_S', d_2')$ must satisfy $\beta_S' < \beta_S$ or $(\beta_S', d_2') = (\beta_S, 0)$.

Let $m = \sum_i h^i(S)$. We intend to form an $N \times N$
invertible system with $N = m(r + 1)$. The virtual dimension of the
second series is
$$
d_2(r + 2) + 2r + 1 + s.
$$
Thus for $d_2 = 1$, we should require $|c| + |d| = r + |\tb_i| + j$ to match the dimension.

Natural choices of $\{(c, d)\}$ are
\begin{equation}
c = c_{k, l} := \tb_k \xi^l, \quad d = h^r.
\end{equation}
The set $\{c_{k, l}\}$ is partially ordered by $|\tb_k|$ and then by $l$.

We claim that the resulting system is upper triangular with non-zero
diagonal. Indeed,
$$
\langle \hatt\tb_i H_{r - j} \Theta_{r + 1}, \tb_k \xi^{l + 1}, \xi h^r
\rangle_{0, 1} \ne 0
$$
only if $|\tb_k| + l = |\tb_i| + j$.

The key point is to use the fiber bundle structure $\Mbar_{0, n}(X,
\beta) \to S$ for $\beta = d\ell + d_2\gamma$ as in the extremal
case (where $d_2 = 0$). The fiber is given by $\Mbar_{0, n}$ of the
toric local model for the simple flop case.

Thus if $|\tb_k| > |\tb_i|$ then $|\hatt\tb_i| + |\tb_k| > s$ and the
invariant is zero. Even in the case $|\tb_k| = |\tb_i|$, and so $l =
j$, we must have $\tb_k = \tb_i$ to avoid trivial invariants. The other
cases $|\tb_k| < |\tb_i|$ belong to the strict upper triangular region
which do not affect our concern.

It remains to calculate the diagonal fiber series (sum in $d \ge
0$)
$$
 \sum_i \langle \hatt\tb_i H_{r - j} \Theta_{r + 1}, \tb_i \xi^{j + 1}, \xi h^r
\rangle_{0, 1} = \langle h^{r - j} (\xi - h)^{r + 1}, \xi^{j + 1},
\xi h^r \rangle_{d_2 = 1}^{\rm simple}.
$$
We had done a similar calculation before for the extremal case in
\cite{LLW}, Proposition 3.8. In the current case we have

\begin{lemma}
For simple flops, the fiber series in $d$ with $d_2 = 1$ are given
by
$$
\langle h^{r - j} (\xi - h)^{r + 1}, \xi^{j + 1}, \xi h^r
\rangle_{d_2 = 1} =
\begin{cases}
(-1)^j q^\ell q^\gamma, \quad 0 \le j \le r - 1;\\
(1 - (-1)^{r + 1}q^\ell) q^\gamma, \quad j = r.
\end{cases}
$$
\end{lemma}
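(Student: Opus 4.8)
The plan is to compute the fiber series by $\mathbb{C}^\times$-localization on the toric local model of the simple flop, exactly as in \cite{LLW}, Proposition 3.8, but now with the extra marked point carrying contact/degree data $d_2 = 1$ along the $p$-fiber direction. Recall for a simple $P^r$ flop, $X_{loc} = \tilde E = P(\mathscr{O}_{P^r}(-1)^{\oplus(r+1)}\oplus\mathscr{O})$ is a smooth toric variety, so the genus zero GW invariants in the class $\beta = d\ell + \gamma$ are entirely accessible by torus fixed-point data. First I would set up the Atiyah--Bott localization for $\Mbar_{0,3}(X_{loc}, d\ell + \gamma)$ with the three insertions $h^{r-j}(\xi - h)^{r+1}$, $\xi^{j+1}$, $\xi h^r$, noting that $(\xi - h)^{r+1} = \Theta_{r+1}$ restricted appropriately and that $\xi h^r$ is (up to the obvious Poincar\'e duality) the point class on the fiber so that the last insertion forces the image curve to pass through a specific fixed point on the zero section; this rigidity is what makes the sum collapse.

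The key structural point is the factorization of the curve class: $\beta = d\ell + \gamma$ decomposes, on a fixed stable map, into a degree-$d$ part mapping into $Z = P^r$ (the base of the $\bar\psi$-fibration) and a degree-$1$ part in the $\gamma$ direction (a line in the $P^1 = P(\mathscr{O}(-1)\oplus\mathscr{O})$ fiber, i.e.~the infinity-to-zero direction of $\tilde E \to Z$). I would use the divisor/string manipulations together with the known closed form of the $d_2 = 0$ extremal series — these are the functions built from $\f(q^\ell)$ from \eqref{e:brf} and Theorem \ref{q-correct} — to reduce the $d_2 = 1$ generating function to a finite combination of $\f(q^\ell)$ and its $\delta_h$-derivatives multiplied by $q^\gamma$. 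Concretely, I expect that inserting $\xi$ (which satisfies $(\xi.\gamma) = 1$, $(\xi.\ell)=0$) via the divisor axiom trades a $\xi$-insertion for the operator $\delta_\gamma = q^\gamma\,d/dq^\gamma$ acting on a series that is \emph{linear} in $q^\gamma$, hence simply multiplies by $1$ and tacks on the $q^\gamma$; the $h$-powers are then resolved exactly as in the $\m=0$ simple computation. The case split $0 \le j \le r-1$ versus $j = r$ mirrors the split in Proposition \ref{mu=1}: for $j < r$ the power $h^{r-j}(\xi-h)^{r+1}$ expands with top $h$-term of degree $< 2r+1$ so only the ``geometric series'' part $\f$ contributes through its leading coefficient, giving $(-1)^j q^\ell q^\gamma$; for $j = r$ the expansion reaches $h^{r+1} = -c_1 h^r - \cdots$ (with $c_i = 0$ in the simple case since $S = \mathrm{pt}$), and one picks up the full $1/(1 - (-1)^{r+1} q^\ell)$ denominator cleared out, yielding $(1 - (-1)^{r+1}q^\ell)q^\gamma$.

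The main obstacle I anticipate is bookkeeping the obstruction bundle contribution correctly: unlike the pure extremal ($d_2 = 0$) case where $[\Mbar_{0,n}(X,d\ell)]^{vir} = [\Mbar_{0,n}(Z,d\ell)] \cap e(R^1ft_* e^*N)$ with $N = \mathscr{O}(-1)^{\oplus(r+1)}$, here the stable maps have a $\gamma$-component and the normal bundle of the image in $\tilde E$ along that component is different; one must check that the Euler class of the relevant obstruction/excess bundle over the degree-$1$ fiber piece contributes exactly a factor accounting for the stated answer and no more. I would handle this by observing that the $\gamma$-direction lies in the $P^1$-fiber which is a \emph{complete} (compact, convex) toric factor, so the degree-$1$ maps in that direction are unobstructed and rigid, contributing only combinatorial (localization weight) factors that cancel in the normalization; the genuinely nontrivial obstruction is confined to the $Z$-direction and is identical to the simple $\m = 0$ analysis. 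A secondary subtlety is ensuring the three chosen insertions exhaust the virtual dimension $d_2(r+2) + 2r+1 = 3r+3$ precisely for $d_2 = 1$: $|h^{r-j}(\xi-h)^{r+1}| + |\xi^{j+1}| + |\xi h^r| = (r-j + r+1) + (j+1) + (1 + r) = 3r + 4$, which overshoots by one, so in fact one of the insertions should be read with its $i_*$ or a descendant $\tau$ is implicit — I would double-check against the normalization in the sentence preceding the lemma (``virtual dimension of the second series is $d_2(r+2) + 2r+1+s$'' with $s = 0$ here and the $h^r$ vs.~$\xi h^r$ discrepancy absorbed into the pairing), and adjust the degree accounting accordingly before carrying out the localization sum.
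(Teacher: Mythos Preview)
Your proposal has a genuine gap and also diverges substantially from the paper's route.

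First, your dimension count is mis-added: $|h^{r-j}(\xi-h)^{r+1}| + |\xi^{j+1}| + |\xi h^r| = (2r+1-j) + (j+1) + (r+1) = 3r+3$, which matches the virtual dimension $d_2(r+2)+2r+1 = 3r+3$ exactly. There is no off-by-one to repair and no hidden $\tau$ or $i_*$.

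More seriously, you never identify the mechanism that makes the $d$-series \emph{terminate}. You anticipate reducing to ``a finite combination of $\f(q^\ell)$ and its $\delta_h$-derivatives'', but the answer is a \emph{polynomial} in $q^\ell$ of degree $\le 1$, not a rational function built from $\f$. The reason is the quasi-linearity principle already established in \cite{LLW}: after reducing to a one-point descendent and pairing against a class divisible by $\xi$, the $J$-factor $J_\beta$ for $\beta = d\ell + d_2\gamma$ vanishes (after $\cdot\,\xi$) whenever $d_2 - d < 0$. With $d_2 = 1$ this kills every $d \ge 2$, leaving only $d = 0,1$ to compute by hand. Your localization sketch does not capture this vanishing, and the heuristic ``only the leading coefficient of $\f$ contributes'' is not a mechanism.

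The paper's proof does not localize directly. It moves divisors via (\ref{div-rel-1}) and (\ref{div-rel-2}) to collapse the $3$-point invariant to the one-point descendent $\langle \psi^{j}(h+dz)^{r}h^{r-j}(\xi-h)^{r+1}\rangle_{d_2 = 1}$, then reads this off as the $z^0$-coefficient of $z^{j+2}J_\beta\cdot(h+dz)^r h^{r-j}(\xi-h)^{r+1}$ using the explicit closed form of $J_\beta$ for the simple toric model. With the $d\le 1$ cutoff in hand, the $d=0$ case is nonzero only for $j=r$ (since $h^{r-j}$ must be $1$ to survive cupping with $h^r$), contributing $1\cdot q^\gamma$; the $d=1$ case gives $(-1)^j$ for all $j$ from a short expansion of $1/((1+h/z)(1+\xi/z))$. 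This is what produces the stated dichotomy. Your obstruction-bundle worries are bypassed entirely because the $J$-function already packages the virtual class.
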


\begin{proof}
By applying the divisor relation to move one $\xi$ class with
respect to $(i, j, k) = (2, 1, 3)$, we get (notice that $\xi(\xi -
h)^{r + 1} = 0$)
\begin{equation*}
\begin{split}
&\langle h^{r - j} (\xi - h)^{r + 1}, \xi^{j + 1}, \xi h^r \rangle_{d_2 = 1} \\
&= \sum_{\mu} \langle \xi^j, \xi h^r , T_\mu \rangle_0 \delta_\xi \langle T^\mu,
h^{r - j}(\xi - h)^{r + 1}\rangle_1 - \delta_\xi\langle \xi^j, T_\mu\rangle_1
\langle T^\mu, h^{r - j} (\xi - h)^{r + 1}, \xi h^r\rangle_0 \\
&= \langle h^{r - j}(\xi - h)^{r + 1}, \xi^{j + 1}h^r\rangle_1.
\end{split}
\end{equation*}

By another divisor relation (\ref{div-rel-2}), we can keep track on the 2-point invariants as follows:
\begin{equation*}
\begin{split}
&\langle h^{r - j}(\xi - h)^{r + 1}, \xi^{j + 1}h^r\rangle_1\\
&= \langle \psi h^{r - j}(\xi - h)^{r + 1}, \xi^{j}h^r\rangle_1 -
\sum_{\mu} \delta_{\xi}\langle \xi^jh^r, T_\mu\rangle_1\langle T^\mu,
h^{r-j}(\xi-h)^{r+1}\rangle_0\\
&= \langle \psi h^{r - j}(\xi - h)^{r + 1}, \xi^{j}h^r\rangle_1=\cdots \\
&= \langle \psi^{j+1}h^{r - j}(\xi - h)^{r + 1}, h^r\rangle_1.
\end{split}
\end{equation*}
Here we use the fact that there is no extremal invariants with any
insertion involving $\xi$ (notice that $(\xi-h)^{r+1} =
\xi(\cdots)$ since $h^{r+1}=0$).

Next we move the divisor class $h$ in $h^r$ to the left one by one:
\begin{equation*}
\begin{split}
&\langle \psi^{j+1}h^{r - j}(\xi - h)^{r + 1}, h^r\rangle_1\\
&= \langle \psi^{j+1}h^{r - j+1}(\xi - h)^{r + 1},
h^{r-1}\rangle_1 + \delta_h\langle \psi^{j+2}h^{r - j}(\xi - h)^{r
+ 1}, h^{r-1}\rangle_1\\
&\qquad - \sum_{\mu} \delta_h\langle h^{r-1}, T_\mu\rangle_0\langle T^\mu,
\psi^{j+1}h^{r - j}(\xi - h)^{r + 1}\rangle_1\\
&=\langle \psi^{j+1}(h+d\psi)h^{r - j}(\xi - h)^{r + 1},
h^{r-1}\rangle_1 = \cdots \\
&=\langle \psi^{j+1}(h+d\psi)^{r-1}h^{r - j}(\xi - h)^{r + 1},
h\rangle_1.
\end{split}
\end{equation*}
Note that $\langle h^{r-1}, T_\mu\rangle_0=0$ since the power of
$h$ is less than $r$.

Finally, the divisor axiom helps us to obtain the result:
\begin{equation*}
\begin{split}
&\langle \psi^{j+1}(h+d\psi)^{r-1}h^{r - j}(\xi - h)^{r + 1},
h\rangle_1\\
&=d\langle \psi^{j+1}(h+d\psi)^{r-1}h^{r - j}(\xi - h)^{r +
1}\rangle_1 + \langle h\psi^{j}(h+d\psi)^{r-1}h^{r - j}(\xi -
h)^{r + 1}\rangle_1\\
&=\langle \psi^{j}(h+d\psi)^{r}h^{r - j}(\xi - h)^{r +
1}\rangle_1,
\end{split}
\end{equation*}
which is the constant term in the $z$ expansion in
\begin{equation*}
\begin{split}
&\Big<\sum_{k\geq 0}\frac{\psi^k}{z^k}z^{j}(h+dz)^{r}h^{r -
j}(\xi - h)^{r + 1}\Big>_1\\
&=z^{j+2}{e_1}_*\Big(\frac{1}{z(z-\psi)}e_1^*(h+dz)^{r}h^{r -
j}(\xi - h)^{r + 1}\Big).
\end{split}
\end{equation*}

According to the same discussion of quasi-linearity in \cite{LLW},
if $d_2-d<0$ then $P_\beta$ vanishes after multiplication by $\xi$.
Here $h^{r - j}(\xi - h)^{r + 1}$ does contain at least one $\xi$.
Hence we only need to consider $d_2 \geq d$. Now $d_2 = 1$, thus $d
= 0$ or $1$.

If $d=0$, then $h^rh^{r-j}(\xi-h)^{r+1}$ is nontrivial only if
$j=r$ and in this case we get $h^r(\xi-h)^{r+1}=h^r\xi^{r+1} =
\mbox{pt}$. It is clear that the constant term of $z$ in
$$
z^{r + 2} J_\beta.\mbox{pt} =
z^{r+2}\frac{1}{(\xi-h+z)^{r+1}(\xi+z)}.{\rm pt}
$$
is equal to 1.

If $d=1$, then $J_\beta = 1/(h+z)^{r+1}(\xi+z)$. Thus
\begin{equation*}
\begin{split}
&z^{j+2}\frac{(h+z)^{r}h^{r - j}(\xi - h)^{r + 1}}{(h+z)^{r+1}(\xi
+ z)} \\
&=\frac{z^{j+2}}{z^2}\frac{h^{r - j}(\xi - h)^{r +
1}}{(1 + {h}/{z})(1 + {\xi}/{z})}\\
&=z^jh^{r-j}(\xi-h)^{r+1} \Big(1-\frac{h}{z} +
\frac{h^2}{z^2}-\cdots(-1)^j\frac{h^j}{z^j}+\cdots
\Big)\Big(1-\frac{\xi}{z} +\cdots \Big).
\end{split}
\end{equation*}
Since $\xi(\xi-h)^{r+1}=0$, the constant term is given by
$$
(-1)^jh^r(\xi-h)^{r+1}=(-1)^jh^r\xi^{r+1}=(-1)^j.
$$
The proof is complete.
\end{proof}

Now we consider $n$-point functions with $n \ge 3$. The WDVV
equation is for triple derivatives of the $g = 0$ potential
function. Let $t \in H^{>2}(X)$ be a general insertion without the
fundamental class and divisors. Then we have
\begin{equation} \label{WDVV-n}
\sum_{i, j} \langle a, b, \tb_ih^j \rangle_{\beta_S, 0}(t) \langle
\hatt \tb_i H_{r - j} \Theta_{r + 1}, \tb_k \xi^{l + 1}, \xi h^r
\rangle_{0, 1}(t) = I_{k, l}(t)
\end{equation}
where any series in $I_{c, d}$ over $(\beta_S', d_2')$ must satisfy $\beta_S' < \beta_S$ or $(\beta_S', d_2') = (\beta_S, 0)$.

By dimension counting, one more marked point increases one virtual
dimension while $t$ has Chow degree more
than one, so we find that
$$
\langle \hatt \tb_i H_{r - j} \Theta_{r + 1}, \tb_k \xi^{l + 1}, \xi h^r
\rangle_{0, 1}(t) = \langle \hatt \tb_i
 H_{r - j} \Theta_{r + 1}, \tb_k \xi^{l + 1}, \xi h^r
\rangle_{0, 1}
$$
is in fact independent of $t$ when $|\tb_i| + j = |\tb_k| + l$. The linear system (\ref{WDVV-n}) is thus $\T$-compatible by the quantum invariance of simple flop case \cite{LLW}.

In any case, if $|\tb_k| > |\tb_i|$ then the invariants are still zero.
In particular the $N \times N$ system is still upper triangular.
Moreover the diagonal entries are still given by the original 3
point (finite) series. Thus the series
$$
\langle a, b, \tb_ih^j \rangle_{\beta_S, 0}(t)
$$
are solvable in terms of the expected terms.

\end{document}